\begin{document}

\newtheorem{lem}{Lemma}[section]
\newtheorem{prop}{Proposition}
\newtheorem{con}{Construction}[section]
\newtheorem{defi}{Definition}[section]
\newtheorem{coro}{Corollary}[section]
\newcommand{\hf}{\hat{f}}
\newtheorem{fact}{Fact}[section]
\newtheorem{theo}{Theorem}
\newcommand{\Br}{\Poin}
\newcommand{\Cr}{{\bf Cr}}
\newcommand{\dist}{{\bf dist}}
\newcommand{\diam}{\mbox{diam}\, }
\newcommand{\mod}{{\rm mod}\,}
\newcommand{\compose}{\circ}
\newcommand{\dbar}{\bar{\partial}}
\newcommand{\Def}[1]{{{\em #1}}}
\newcommand{\dx}[1]{\frac{\partial #1}{\partial x}}
\newcommand{\dy}[1]{\frac{\partial #1}{\partial y}}
\newcommand{\Res}[2]{{#1}\raisebox{-.4ex}{$\left|\,_{#2}\right.$}}
\newcommand{\sgn}{{\rm sgn}}

\newcommand{\C}{{\bf C}}
\newcommand{\D}{{\bf D}}
\newcommand{\Dm}{{\bf D_-}}
\newcommand{\N}{{\bf N}}
\newcommand{\R}{{\bf R}}
\newcommand{\Z}{{\bf Z}}
\newcommand{\tr}{\mbox{Tr}\,}

\newenvironment{nproof}[1]{\trivlist\item[\hskip \labelsep{\bf Proof{#1}.}]}
{\begin{flushright} $\square$\end{flushright}\endtrivlist}
\newenvironment{proof}{\begin{nproof}{}}{\end{nproof}}

\newenvironment{block}[1]{\trivlist\item[\hskip \labelsep{{#1}.}]}{\endtrivlist}
\newenvironment{definition}{\begin{block}{\bf Definition}}{\end{block}}

\newtheorem{conjec}{Conjecture}
\newtheorem{com}{Comment}
\newtheorem{exa}{Example}
\font\mathfonta=msam10 at 11pt
\font\mathfontb=msbm10 at 11pt
\def\Bbb#1{\mbox{\mathfontb #1}}
\def\lesssim{\mbox{\mathfonta.}}
\def\suppset{\mbox{\mathfonta{c}}}
\def\subbset{\mbox{\mathfonta{b}}}
\def\grtsim{\mbox{\mathfonta\&}}
\def\gtrsim{\mbox{\mathfonta\&}}

\newcommand{\ar}{{\bf area}}
\newcommand{\1}{{\bf 1}}
\newcommand{\Bo}{\Box^{n}_{i}}
\newcommand{\Di}{{\cal D}}
\newcommand{\gd}{{\underline \gamma}}
\newcommand{\gu}{{\underline g }}
\newcommand{\ce}{\mbox{III}}
\newcommand{\be}{\mbox{II}}
\newcommand{\F}{\cal{F_{\eta}}}
\newcommand{\Ci}{\bf{C}}
\newcommand{\ai}{\mbox{I}}
\newcommand{\dupap}{\partial^{+}}
\newcommand{\dm}{\partial^{-}}
\newenvironment{note}{\begin{sc}{\bf Note}}{\end{sc}}
\newenvironment{notes}{\begin{sc}{\bf Notes}\ \par\begin{enumerate}}%
{\end{enumerate}\end{sc}}
\newenvironment{sol}
{{\bf Solution:}\newline}{\begin{flushright}
{\bf QED}\end{flushright}}

\title{Multipliers of periodic orbits in spaces of rational maps
\date{}
}
\author{Genadi Levin
\thanks{Supported in part by EU FP6 Marie Curie ToK programme SPADE2 at IMPAN, Poland}\\
\small{Institute of Mathematics, the Hebrew University of Jerusalem, Israel}\\
} \normalsize \maketitle

\centerline{\it \ \ \ \ \ \ \ \ \ \ \ \ \ \ \ \ \ \ \ \ \ \ \ \ \ \ \ \ \ \ \
\ \ \ \ \ \ \ \ Dedicated to the memory of Adrien Douady}

\

\abstract{Given a polynomial or
a rational function $f$ we include it in a space of maps. 
We introduce local coordinates in this space, which are essentially the
set of critical values of the map. Then we consider an arbitrary
periodic orbit of $f$ with multiplier $\rho\not=1$ as a function
of the local coordinates, and establish a simple connection
between the dynamical plane of $f$ and the function $\rho$ 
in the space associated to $f$. 
The proof is based on the theory of
quasiconformal deformations of
rational maps. As a corollary, we show that multipliers of
non-repelling periodic orbits are also local coordinates in the
space.
} \

\section{Introduction}
The multiplier map of an attracting periodic orbit of 
a quadratic polynomial within the quadratic family
$z\mapsto z^2+v$ uniformizes the component of parameters $v$, for which
it is attracting. This theorem~\cite{DH1},~\cite{mcmsul},~\cite{CG}
is a cornerstone of the Douady-Hubbard theory of the Mandelbrot set.
It has been generalized to
components of hyperbolic polynomials~\cite{mil} and degree two
rational maps~\cite{mary}.
 
In this paper we develop a local approach
to the problem of behavior of multipliers of periodic
orbits in general spaces of polynomials and rational maps.
Our approach is somewhat closer to~\cite{Eps} and especially~\cite{Leij0}.
As a corollary, we show that
the multiplier maps of attracting and neutral periodic orbits
are local coordinates in the (moduli) spaces of polynomials and rational maps.
It includes in particular the cited above Douady-Hubbard-Sullivan
theorem.

Let us state our main results. Let $f$ be a rational function of
degree $d\ge 2$. We consider a {\it Ruelle transfer operator}
$T=T_f$, which acts on functions $\psi$
as follows:
$$T\psi(z)=\sum_{w:f(w)=z}\frac{\psi(w)}{(f'(w))^2},$$
provided $z$ is not a critical value of $f$. Let us also consider 
any {\it periodic orbit} $O=\{b_k\}_{k=1}^n$ of $f$ of exact
period $n$. Denote its {\it multiplier} by $\rho$:
$$\rho=(f^n)'(b_k)=\Pi_{j=1}^n f'(b_j).$$
We assume that $\rho\not=1$. 
(For $\rho=1$, see Sects.~\ref{cusp},~\ref{rcusp}.)
Let us associate to the
periodic orbit $O$ of $f$ a rational function $B_O$:
\begin{equation}\label{iB-func}
B_O(z)=\sum_{k=1}^n \frac{\rho}{(z-b_k)^2}+\frac{1}{1-\rho}
\sum_{k=1}^n \frac{(f^n)''(b_k)}{z-b_k}.
\end{equation}
It was
introduced in~\cite{Leij0} for the unicritical family.

On the other hand, we include $f$ in a natural space of rational
maps of the same degree. Roughly speaking, this is the set of maps
with fixed multiplicities at different critical points and similar
behavior at $\infty$. 
For instance, the space associated to
a unicritical polynomial $z^d+v_0$ is the unicritical
family $z^d+v$, $v\in {\bf C}$. 
We introduce local coordinates in the space
near $f$, which are basically the set of critical values
$v_1,...,v_p$. When $f$, hence, the periodic orbit,
moves in the space, $\rho$ becomes a holomorphic function in these
coordinates. Our main aim is to show that the following
connection holds between the dynamical and the parameter spaces of $f$:
\begin{equation}\label{iruelle}
B_O(z)-(TB_O)(z)= \sum_{j=1}^{p}\frac{\partial \rho}{\partial
v_j}\frac{1}{z-v_j}.
\end{equation}
For unicritical polynomials, this connection appeared
and was proved in~\cite{Leij0}. It has been applied to the 
problems of geometry of Julia and the
Mandelbrot sets in~\cite{Leij0},~\cite{leij1},~\cite{le2}.
\begin{com}\label{ae}
Considered on
quadratic differentials $\psi(z) dz^2$ the operator $T$ is 
a so-called
pushforward operator $\sum_{f(w)=z} \psi(w) dw^2$ introduced to the field
probably by Thurston in his work 
on critically finite branched covering maps of the sphere, see~\cite{dht}.
In completely different applications it appeared in
\cite{lman},~\cite{lpade},~\cite{sy}. 
Explicit formulae for the Fredholm determinant of $T$
in spaces of analytic functions
have been proved and used in~\cite{LSY},~\cite{LSY1},~\cite{Le1},~\cite{ELS}.
See also~\cite{Ts}. 
The action of $T$ on quadratic differentials with multiple poles
was first studied and used in~\cite{Eps}.
For applications to the problems
of rigidity in complex dynamics, see~\cite{dht},~\cite{maryaster},~\cite{Eps},~\cite{mak0},
\cite{Le},~\cite{mak},~\cite{mak1},~\cite{uz},~\cite{leij1},~\cite{le2}. 
In particular, in~\cite{Le} we use the operator $T$ for proving
the absence of invariant linefield on some Julia sets.
The scheme of~\cite{Le} is applied in~\cite{mak},~\cite{mak1},~\cite{uz}.
\end{com}
\begin{com}\label{natural}
The function $B_O$ associated to the periodic orbit $O$ appears
naturally in~\cite{Leij0} in the context of quadratic polynomials 
$f(z)=z^2+v$. Namely, assume that the
periodic orbit $O$ of period $n$ is attracting, more exactly, $0<|\rho|<1$. 
Consider a series
$H_\lambda(z)=\sum_{k> 0}\frac{\lambda^{k-1}}{(f^{k-1})'(v)(z-f^k(0))}$,
which converges for $|\lambda|<|\rho|^{1/n}$. 
Then the following identity holds 
(\cite{lman},~\cite{lpade}, see also~\cite{sy} ~\cite{LSY}):
$\lambda (TH_\lambda)(z)=H_\lambda(z)-\frac{D(\lambda)}{z-v}$,
where $D(\lambda)=\sum_{k> 0}\lambda^{k-1}/(f^{k-1})'(v)$. 
For a fixed $z$, the functions $H_\lambda(z)$
and $D(\lambda)$ extend to meromorphic functions on the
complex plane, with simple poles at the points $\lambda$, so that
$\lambda^n=\rho^{1-j}$, for $j=0,1,2,...$. Then calculations
show that the residue of $H_\lambda(z)$ at the point $\lambda=1$
is (up to a factor) the rational function $B_O(z)$.
Taking the residues at $\lambda=1$ of both sides of the above identity,
we come to the relation
$T B_O(z)=B_O(z)-\frac{L}{z-v}$, for some number $L$.
Furthermore, it is shown in~\cite{Leij0} that 
the latter holds for any periodic orbit of $f$ with $\rho\not=1$,
and that $L=\rho'(v)$. Surprisingly, all this is generalized by
the connection~(\ref{iruelle}) of the present paper
to a non-linear polynomial and rational function, with special
normalization at infinity.
The proof also sheds light on the nature of~(\ref{iruelle}).
\end{com}
The idea of the proof of~(\ref{iruelle}) is roughly as follows,
see Sects~\ref{red},~\ref{rthm} for details, for polynomials
and rational functions respectively. 
Given $f$
along with its periodic orbit $O$, we join it by a path inside of the
space associated to $f$ to a map $g$, such that $g$ is hyperbolic,
and the analytic continuation of $O$ along the path turns $O$ into
an attracting periodic orbit of $g$. Since both sides
of~(\ref{iruelle}) depend analytically on the local coordinates,
it follows that it is enough to prove~(\ref{iruelle}) for an open subset of
hyperbolic maps $g$ and for their attracting periodic orbits $O$.
To do this, we use the
theory of quasiconformal deformations (``Teichmuller theory'')
of rational maps developed in Mane-Sad-Sullivan~\cite{MSS} 
and McMullen-Sullivan~\cite{mcmsul}. 
The operator $T$ will serve as a
transfer between parameters and the dynamics in this context.

We show also that multipliers of non-repelling
periodic orbits are local coordinates in the
space associated to $f$ modulo a standard equivalence relation. 
For precise statements, see Theorem~\ref{attr} for polynomials,
and Sect.~\ref{rlocalsec} for rational functions.
We illustrate the results on the examples of 
quadratic polynomials (Comments~\ref{dhs}-\ref{cuspqu})
and quadratic rational functions (Corollary~\ref{corquad}).
The proof of Theorems~\ref{attr} and~\ref{rattr}
is based on~(\ref{iruelle}), see Sects.~\ref{attrs} and~\ref{rattrs}
respectively.
Then it boils down to the fact that $T$ has no fixed points 
of a certain form (which is roughly a linear combination
of functions $B_O$ for non-repelling orbits),
and this follows from the contraction property of $T$. 
The latter idea goes back to Thurston's work mentioned above
and has been applied, among others, in
\cite{dht}, \cite{maryaster}, \cite{Ts}, \cite{Eps}, \cite{Le}, \cite{Leij0}, 
\cite{mak}. 
See end of Sect.~\ref{mainform}, Sect.~\ref{attrs} and particularly
Sect.~\ref{rattrs} for precise formulations and further discussion.

In Sects.~\ref{s1}-\ref{attrs} we accomplish these aims for polynomials, see 
Theorems~\ref{main},~\ref{attr}, and in 
Sects.~\ref{rs1}-\ref{rattrs} we do this for  
rational maps, see Theorem~\ref{rmain} and Sect.~\ref{rlocalsec}.
Although the proof for polynomials and rational maps is essentially the same,
we treat the polynomial case separately
because of a special characteristic behavior of polynomials at $\infty$,
and also because the proof
in this case is technically simpler and hence more transparent. 
\begin{com}\label{field}
Since~(\ref{iruelle}) is a formal identity, 
which holds
for any rational function over ${\bf C}$ and any periodic orbit
with multiplier not equal to one, it holds (literally) 
for rational functions over every field which is isomorphic to ${\bf C}$,
in particular, for the $p$-adic fields.
\end{com} 
{\bf Acknowledgments.}  
Most of the paper was written during my stay
at the Institute of Mathematics of PAN, Warsaw, April-July, 2008
(arXiv 0809.0379, 2008).
I gratefully acknowledge this Institution for hospitality and 
excellent conditions for work. 
I am indebted to Alex Eremenko and Nessim Sibony for very useful
discussions on the proofs of Proposition~\ref{rlocal}.
I am grateful to  
Xavier Buff, David Kazhdan, Jan Kiwi, Tan Lei, Juan Rivera-Letelier,
Ehud de Shalit
for various discussions and questions, and
Maxim Kontsevich for the reference~\cite{conn}.
Some results of the present work should be related
to~\cite{Epstalk}. I thank Adam Epstein for
sending me~\cite{Epstalk} as well as some useful comments. 
Finally, I would like to thank
the referee for a constructive and detailed report.

\

Throughout the paper,
$$B(a,r)=\{z: |z-a|<r\}, \ \ B^*(R)=\{z:
|z|>R\}.$$
\section{Polynomials. Formulation of main results}\label{s1}
\subsection{Polynomial spaces}\label{space}
Introduce a space $\Pi_{d,\bar p}$ of polynomials and its subspace
$\Pi^q_{d, \bar p}$ as follows. Roughly speaking, the first space
is the set of maps with fixed multiplicities of critical points,
and maps from its subspace are those with a fixed number of
different critical values.
\begin{defi}\label{pol}
Let $d\ge 2$ be an integer, $\bar p$ a set of $p$ positive
integers $\bar p=\{m_j\}_{j=1}^p$, such that $\sum_{j=1}^p
m_j=d-1$, and $q$ an integer $1\le q\le p$. A polynomial $f$ of
degree $d$ belongs to $\Pi_{d,\bar p}$ iff it is monic and
centered, i.e., has the form
$$f(z)=z^d+a_1 z^{d-2}+...+a_{d-1},$$
and, moreover, $f$ has $p$ geometrically different critical points
$c_1,...,c_p$ with the multiplicities (as roots of $f'$)
$m_1,...,m_p$ resp.

The space $\Pi_{d, \bar p}^q$ is said to be a subset of those
$f\in \Pi_{d,\bar p}$, such that $f$ has precisely $q$
geometrically different critical values, i.e., the set
$\{v_j=f(c_j), \ j=1,...,p\}$ contains $q$ different points.
\end{defi}
In particular, $m_j=1$ iff $c_j$ is simple, so that $p=d-1$ iff
all critical points are simple. At the other extreme case, the
space $\Pi_{d, \bar 1}$ consists of the unicritical family
$z^d+v$.
Up to a linear change of variables, every non-linear polynomial
belongs to some $\Pi^q_{d, \bar p}$.

By definition,
\begin{equation}\label{eqcr}
f'(z)=d . \Pi_{j=1}^p(z-c_j)^{m_j}.
\end{equation}
Since $f$ is centered, there is a linear connection between
$c_1,...,c_p$:
\begin{equation}\label{lincr}
\sum_{j=1}^p m_j c_j=0.
\end{equation}
Let us identify $f\in \Pi_{d,\bar p}$ as above with the point
$$\bar f=\{a_1,...,a_{d-1}\}\in {\Ci}^{d-1}.$$
Given $f_0\in \Pi_{d, \bar p}$, we introduce two local coordinates
$\bar c$ and $\bar v$ in a neighborhood of $f_0$ in $\Pi_{d,\bar
p}$. Roughly speaking, $\bar c$ encodes a map through
geometrically different critical points, and $\bar v$ through
their images (corresponding critical values). Let us define it
precisely.

Let $\{c_1(f_0),...,c_p(f_0)\}$ be the collection of all
geometrically different critical points of $f_0$. We fix the order
of the critical points (so called "marked polynomial"). Then, for
every $f\in \Pi_{d, \bar p}$, such that $\bar f$ and $\bar f_0$
are close enough points of ${\Ci}^p$, the critical points
$c_1(f),...,c_p(f)$ of $f$ can be ordered in such a way, that
$c_j(f)$ is close to $c_j(f_0)$, $j=1,...,p$. We set
\begin{equation}\label{c}
\bar c(f)=\{f(0),c_1,...,c_p\},
\end{equation}
where $c_j=c_j(f)$. Note that $c_1,...,c_p$ satisfy~(\ref{lincr}).
It follows from the equality
$f(z)=f(0)+d \int_{0}^z \Pi_{j=1}^p (w-c_j)^{m_j}dw$,
that $f$ is defined uniquely by $\bar c$ and, moreover, $\bar f\in
{\Ci}^{d-1}$ is a holomorphic function of $\bar c$ modulo the
linear relation~(\ref{lincr}). Clearly, $\bar c$ lies in the
$p$-dimensional linear subspace of ${\Ci}^{p+1}$, and so we
identify this subspace with ${\Ci}^p$.

We are going to introduce a second coordinate system $\bar v$ in
a neighborhood of $f_0$, and prove that it is indeed a local
coordinate. In fact, $\bar v$ will play a central role for us. In
the previous notations, we define
\begin{equation}\label{v}
\bar v=\bar v(f)=\{v_1,...,v_p\}, \ \ \ v_j=\bar v_j(f)=f(c_j(f)).
\end{equation}
Let us stress that some critical values $v_j$ might coincide, so
that the number $q(f)$ of geometrically different critical values
of $f$ might be less than $p$. Nevertheless, the point $\bar v$ is
a point of ${\Ci}^p$.

We have a well-defined correspondence $\pi: \bar c\mapsto \bar v$ 
from a neighborhood of the point
$\bar c(f_0)$ in ${\Ci}^p$ into a neighborhood of $v(f_0)=\pi
(\bar c(f_0))$ in ${\Ci}^p$.
\begin{prop}\label{local}

1. The map
$$\pi: \bar c \mapsto \bar v$$
is a local biholomorphic homeomorphism of a neighborhood of $\bar
c(f_0)$ in ${\Ci}^p$ onto a neighborhood of $\bar v(f_0)$.

2. $\bar f$ is a local holomorphic function of $\bar v$. It is
locally biholomorphic if all the critical points of $f_0$ are
simple.
\end{prop}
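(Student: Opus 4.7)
The plan is to apply the inverse function theorem: since the representation via $\bar c$ already exhibits $\Pi_{d,\bar p}$ as a $p$-dimensional complex manifold near $f_0$, and the target of $\pi$ is also ${\Ci}^p$, it suffices to prove that the differential $d\pi|_{\bar c(f_0)}$ has trivial kernel. First I would describe a tangent vector in the source as a tuple $(\dot{f(0)},\dot c_1,\ldots,\dot c_p)$ constrained by the centering relation $\sum_j m_j \dot c_j = 0$, and introduce the corresponding first variation $\dot f$ of the polynomial. Differentiating $v_i = f(c_i)$ along the tangent direction and using $f'(c_i)=0$ gives at once $\dot v_i = \dot f(c_i)$.

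Next I would extract an algebraic constraint on $\dot f$. Differentiating the factorization $f'(z) = d\prod_j(z-c_j)^{m_j}$ yields
\[
(\dot f)'(z) = -d\prod_j(z-c_j)^{m_j-1}\,S(z),\qquad S(z) = \sum_k m_k \dot c_k \prod_{j\ne k}(z-c_j),
\]
with $\deg S \le p-1$, sharpened to $\deg S \le p-2$ by the centering constraint (consistent with $\deg\dot f \le d-2$ since $f$ is monic and centered). Now suppose $\dot v_i = \dot f(c_i) = 0$ for every $i$. From the factorization, $(\dot f)'$ has a zero of order at least $m_i-1$ at $c_i$, hence $\dot f$ has a zero of order at least $m_i$ at $c_i$. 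Therefore $\prod_j(z-c_j)^{m_j}$, a polynomial of degree $d-1$, divides $\dot f$, which has degree at most $d-2$; this forces $\dot f \equiv 0$. Consequently $(\dot f)'\equiv 0$ makes $S\equiv 0$, and evaluating $S(c_\ell) = m_\ell \dot c_\ell \prod_{j\ne\ell}(c_\ell - c_j)$ gives $\dot c_\ell = 0$ (the critical points being geometrically distinct), while $\dot{f(0)} = \dot f(0) = 0$. This degree count is the main obstacle and the heart of the argument; the rest is bookkeeping.

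For part 2, composing the local inverse $\bar v \mapsto \bar c$ supplied by part 1 with the holomorphic map $\bar c \mapsto \bar f$ arising from $f(z) = f(0) + d\int_0^z \prod_j(w-c_j)^{m_j}\,dw$ immediately exhibits $\bar f$ as a holomorphic function of $\bar v$. The map $\bar v\mapsto\bar f$ goes from ${\Ci}^p$ into ${\Ci}^{d-1}$, so it can be a local biholomorphism only when $p=d-1$, equivalently when every $m_j=1$, i.e.\ when all critical points of $f_0$ are simple; in that case $\Pi_{d,\bar 1}$ is open in ${\Ci}^{d-1}$ and part 1 furnishes the biholomorphism.
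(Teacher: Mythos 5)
Your argument is correct but proceeds by a genuinely different route than the paper. You compute the differential of $\pi$ directly and invoke the holomorphic inverse function theorem: from $(\dot f)'(z)=-d\prod_j(z-c_j)^{m_j-1}S(z)$ with $S(z)=\sum_k m_k\dot c_k\prod_{j\ne k}(z-c_j)$ and the vanishing $\dot v_i=\dot f(c_i)=0$, you conclude $\dot f$ vanishes to order $m_i$ at each $c_i$, hence is divisible by $\prod_j(z-c_j)^{m_j}$, which has degree $d-1>d-2\ge\deg\dot f$, forcing $\dot f\equiv 0$ and thence $\dot c_\ell=\dot{f(0)}=0$. This infinitesimal degree-counting argument is explicit and self-contained. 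The paper instead proves \emph{local injectivity} of $\pi$ by a geometric gluing argument: covering the sphere by balls, extending the branch $F_\infty\circ f_1$ of $f_2^{-1}\circ f_1$ (normalized tangent to the identity at $\infty$) across all components of the preimages, concluding it is an entire map equal to the identity, hence $f_1=f_2$; it then cites the theorem (Whitney, Chapter 4, Theorem 1V) that a locally injective holomorphic map between equidimensional open sets is biholomorphic. Your version is purely local and avoids both the covering construction and that theorem, which makes it tidier for this particular statement; the paper's argument has the advantage of being essentially the same rigidity-style argument that reappears in the rational case (Lemma~\ref{rinj}), where a clean Jacobian computation is less obvious. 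One minor slip: at the end of part~2 you write $\Pi_{d,\bar 1}$ for the space of polynomials with all simple critical points, but in the paper's notation $\Pi_{d,\bar 1}$ is the unicritical family; you mean $\Pi_{d,\bar p}$ with $\bar p=(1,\ldots,1)$ of length $d-1$.
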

We prove it in Sect.~\ref{pr1}. 

The space $\Pi_{d, \bar p}$ can therefore be identified in a
neighborhood of its point $f_0$ with a neighborhood
$W_{f_0}(\epsilon)= \{(v_1,...,v_p): |v_j-v_j(f_0)|<\epsilon\}$,
$\epsilon>0$, in ${\Ci}^p$. If, moreover, $f_0\in \Pi_{d, \bar
p}^q$, for some $q$, then its neighborhood in $\Pi_{d, \bar p}^q$
is the intersection of $W_{f_0}(\epsilon)$ with a $q$-dimensional
linear subspace of ${\Ci}^p$ defined by the conditions: $v_i=v_j$
iff $v_i(f_0)=v_j(f_0)$. Thus the vector $\{V_1,...,V_q\}$ of
different critical values of $f\in \Pi_{d, \bar p}^q$ serves as a
local coordinate systems in $\Pi_{d, \bar p}^q$.
\subsection{Main results}\label{mainform}
Let $f$ be a polynomial.
Consider a periodic orbit $O=\{b_k\}_{k=1}^n$ of $f$ of
exact period $n$. Denote its multiplier by $\rho$:
$$\rho=(f^n)'(b_k)=\Pi_{j=1}^n f'(b_j).$$
We assume that $\rho\not=1$. (For $\rho=1$, see next Subsect.~\ref{cusp}.)
Suppose $f$ is monic and centered. Then it belongs to some 
space $\Pi_{d, \bar p}$ and, moreover, to its subspace $\Pi^q_{d,
\bar p}$. (In fact, these spaces are defined uniquely, up to the
order of the different critical points.) These will be the
parameter spaces associated to $f$. By the Implicit Function
theorem and by Proposition~\ref{local}, there is a set of $n$
functions $O(\overline v)=\{b_k(\overline v)\}_{k=1}^n$ defined
and holomorphic in $\overline v\in \Ci^{p}$ in a neighborhood of
$\bar v(f)$, such that $O(\overline v)=O$ for $\overline
v=\overline v(f)$, and $O(\bar v)$ is a periodic orbit of $g\in
\Pi_{d, \bar p}$ of period $n$, where $g$ is in a neighborhood of
$f$, and $\overline v=\overline v(g)$. In particular, if
$\rho(\overline v)$ denotes the multiplier of the periodic orbit
$O(\overline g)$ of $g$, it is a holomorphic function of
$\overline v$ in this neighborhood. The standard notation
$\partial \rho/\partial v_j$ denotes the partial derivatives.

Now, suppose $g$ stays in a neighborhood of $f$ inside of $\Pi_{d,
\bar p}^q$. Then the multiplier $\rho$ of $O(g)$ is, in fact, a
holomorphic function of the vector of $q$ different critical
values $\{V_1,...,V_q\}$ of $g$. By $\partial^V\rho/\partial V_k$
we then denote the partial derivatives of $\rho$ w.r.t. these
critical values. We have:
\begin{equation}\label{vV}
\frac{\partial^V\rho}{\partial V_k}=\sum_{j: v_j=V_k}
\frac{\partial \rho}{\partial v_j}.
\end{equation}

The operator $T$ associated to $f$ and the
rational function $B=B_O$ associated to $O$ are defined in the
Introduction. 
\begin{theo}\label{main}
Suppose $f\in \Pi_{d, \bar p}$, so that $c_1,...,c_p$ denote all
geometrically different critical points of $f$, and $v_j=f(c_j)$
corresponding critical values (not necessarily different). Let $O$
be a periodic orbit of $f$ with multiplier $\rho\not=1$, and
$B=B_O$. Then 
\begin{equation}\label{ruelle}
B(z)-(TB)(z)= \sum_{j=1}^{p}\frac{\partial \rho}{\partial
v_j}\frac{1}{z-v_j},
\end{equation}
and
\begin{equation}\label{partialv}
\frac{\partial \rho}{\partial v_j}=-\frac{1}{(m_j-1)!}
\frac{d^{m_j-1}}{dw^{m_j-1}}|_{w=c_j}\frac{B(w)}{Q_j(w)}=
-\frac{1}{2\pi i}\int_{|w-c_j|=r}\frac{B(w)}{f'(w)}dw,
\end{equation}
where $c_j$ is a critical point of $f$ of multiplicity $m_j$,
and $Q_j$ is a polynomial defined by $f'(z)=(z-c_j)^{m_j}Q_j(z)$.
Furthermore, if $f\in \Pi_{d, \bar p}^q$, then
\begin{equation}\label{ruelleV}
B(z)-(TB)(z)= \sum_{k=1}^{q}\frac{\partial^V \rho}{\partial
V_k}\frac{1}{z-V_k},
\end{equation}
where $V_k$, $k=1,...,q$, are all pairwise different critical
values of $f$.
\end{theo}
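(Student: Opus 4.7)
My plan is to first establish (\ref{ruelle}) on an open subset of $\Pi_{d,\bar p}$ and extend it by analytic continuation, and then to derive (\ref{partialv}) and (\ref{ruelleV}) as formal consequences. Both sides of (\ref{ruelle}) are rational functions of $z$ whose coefficients depend holomorphically on $\bar v$ near $\bar v(f)$: the periodic point $b_k(\bar v)$ moves holomorphically by the implicit function theorem (this is where $\rho\ne 1$ is used), the formula (\ref{iB-func}) then makes $B_O$ depend holomorphically on $\bar v$, and the partials $\partial\rho/\partial v_j$ trivially do likewise. Thus it suffices to establish (\ref{ruelle}) on any open set of maps in $\Pi_{d,\bar p}$. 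I would then use a connectivity argument inside $\Pi_{d,\bar p}$ (driving the critical values out along a suitable path while preserving the collision pattern) to join $f$ to a hyperbolic polynomial $g$ along which the analytic continuation of $O$ becomes an attracting periodic orbit of multiplier $|\rho|<1$.

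For such a $g$, I would invoke the Mane--Sad--Sullivan and McMullen--Sullivan theory of quasiconformal deformations. A Beltrami differential $\mu$ of small norm supported in the Fatou set of $g$ generates a holomorphic motion $g_t$ inside $\Pi_{d,\bar p}$; solving $\dbar v=\mu$ with the normalization forced by $g\in\Pi_{d,\bar p}$ (which makes $v(z)=o(1/z)$ at $\infty$) produces a vector field whose action at the critical values yields the infinitesimal motions $\dot v_j = v(v_j)$. The infinitesimal motion of the multiplier, obtained by differentiating $g^n(b)=b$ and summing a geometric series which converges thanks to $|\rho|<1$, takes the form of a pairing
$$\dot\rho \;=\; \frac{1}{2\pi i}\oint_\Gamma v(z)\bigl[B(z)-(TB)(z)\bigr]\,dz$$
over a large circle $\Gamma$ enclosing all critical values and all points of $O$. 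Here $T$ enters as the pushforward on quadratic differentials, which is precisely the operator comparing the action of $g$ on $v$ (a $(-1,0)$-tensor) with its action on $B\,dz^2$; the two-term structure of (\ref{iB-func}) arises from telescoping the geometric series. Since $B-TB$ is holomorphic away from the $v_j$ (the apparent poles at the $b_k$ cancel by the very construction of $B_O$), the residue theorem reduces the right-hand side to $\sum_j v(v_j)\,\mathrm{Res}_{v_j}(B-TB)$. By Proposition~\ref{local}, $\mu$ may be chosen so that $(\dot v_1,\dots,\dot v_p)$ is an arbitrary prescribed vector in $\Ci^p$; comparison with $\dot\rho=\sum_j(\partial\rho/\partial v_j)\dot v_j$ then yields (\ref{ruelle}).

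The formula (\ref{partialv}) is a direct residue computation from (\ref{ruelle}). Assuming $v_j\ne b_k$ for every $k$ (a generic condition we may arrange by perturbation), $B$ is regular at $v_j$, so the residue of $B-TB$ at $v_j$ equals $-\mathrm{Res}_{z=v_j}(TB)$. The change of variable $z=g(w)$ on a small loop around $v_j$ converts $(TB)(z)\,dz=\sum_{g(w)=z}B(w)/g'(w)^2\,dz$ into $B(w)/g'(w)\,dw$ summed over the $m_j$ local branches at $c_j$, which assembles to the single contour integral $\tfrac{1}{2\pi i}\oint_{|w-c_j|=r}B(w)/g'(w)\,dw$; inserting $g'(w)=(w-c_j)^{m_j}Q_j(w)$ and reading off the residue at $c_j$ gives the stated derivative of $B/Q_j$. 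Finally, (\ref{ruelleV}) is immediate by grouping those terms of (\ref{ruelle}) indexed by $j$ with a common critical value, using (\ref{vV}).

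The hardest part will be the qc-deformation step: deriving the pairing formula $\dot\rho=\tfrac{1}{2\pi i}\oint_\Gamma v(B-TB)\,dz$ and identifying it with the transfer operator applied to $B_O$. This demands precise control of the pullback of $v$ under the iterates $g^j$, the summation of the geometric series that produces (\ref{iB-func}), and the verification that the boundary contribution at $\infty$ vanishes, which uses the fact that $f$ is monic and centered so that $v(z)=o(1/z)$ at infinity.
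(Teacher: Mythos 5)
Your overall strategy matches the paper's: reduce to the hyperbolic case by analytic continuation inside $\Pi_{d,\bar p}$, invoke quasiconformal deformation theory so that critical value speeds can be prescribed arbitrarily, and extract (\ref{partialv}) by a change of variable near $c_j$. But two steps that you treat as routine are exactly the substantive content of the paper's proof, and one of them is glossed over in a way that would leave a real gap.

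First, you write that ``the apparent poles at the $b_k$ cancel by the very construction of $B_O$.'' This is not automatic; it is a nontrivial identity and is in fact the computational core of the proof. The paper isolates it as Theorem~\ref{rformula}: one applies $T$ to the Cauchy kernels $1/(z-a)$ and $1/(z-a)^2$ (Lemma~\ref{rT}), writes out $TB$ term by term, and then verifies, by a direct calculation using $(f^n)''(b_{k+1})$ and the chain rule along the orbit, that the coefficient $\gamma_{k+1}-\gamma_k/f'(b_k)+f''(b_k)/f'(b_k)^2$ vanishes for each $k$. Without this you do not know that $B-TB$ is a rational function with poles only at the $v_j$; your contour $\Gamma$ encloses the $b_k$, and if the cancellation failed there would be additional residues that you have not accounted for. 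Multiple critical points require a separate limiting argument on top of this (approximating $f$ by maps with simple critical points), which the paper also carries out.

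Second, the pairing formula $\dot\rho=\frac{1}{2\pi i}\oint_\Gamma v\,(B-TB)\,dz$ is asserted rather than derived, and the heuristic you offer (``differentiating $g^n(b)=b$ and summing a geometric series'') does not produce it as stated. If you naively differentiate $\rho=\prod_j g'(b_j)$ using $\dot g=v\circ g-g'\cdot v$ and $\dot b_j=v(b_j)$, the terms telescope and you get $\dot\rho=0$ --- because that computation pretends $v$ is holomorphic, whereas the nontrivial contribution comes entirely from $\bar\partial v=\mu$ being nonzero on the Julia set. The paper's Lemma~\ref{linspeed} gives the correct infinitesimal formula $\rho'(0)/\rho=-\frac1\pi\lim_{C\to\{b\}}\int_C\mu(z)(z-b)^{-2}\,d\sigma_z$ via the Ahlfors--Bers variational formula in a linearizing coordinate, and then integrates the already-established identity $A-TA=\sum L_j/(z-v_j)$ against $\mu$ over a carefully excised domain (excluding the orbit), using the $L^1$--$L^\infty$ duality of $T$ and $T^*$. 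Your contour formula is plausibly equivalent to this (it is integration by parts via $\bar\partial v=\mu$, in the spirit of Epstein's formalism), but to make it rigorous you would in effect have to reproduce the same analysis. You flag this as ``the hardest part,'' which is accurate, but it, together with the pole cancellation, is where essentially all the work lies; the residue and analytic-continuation framework surrounding them is the easy part.

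Two smaller remarks. Your phrase ``preserving the collision pattern'' along the connecting path should refer only to the multiplicities $\bar p$, not to coincidences among critical values: the hyperbolic target $g$ must have no critical relations at all (the paper's condition (2)), since otherwise $\dot v_i=v(v_i)=v(v_j)=\dot v_j$ whenever $v_i=v_j$ and you cannot prescribe the $\dot v_j$ independently, which is essential to your final comparison. And the normalization forcing $\partial h_t/\partial t|_{t=0}(z)=O(1/z)$ is exactly the paper's (\ref{z'short}); your observation that monic-and-centered kills the linear and constant terms in the Ahlfors--Bers expansion is correct and is used in the same way.
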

In view of~(\ref{vV}), the formula~(\ref{ruelleV}) is an immediate
corollary of~(\ref{ruelle}).

For the family of unicritical polynomials, i.e., in the space
$\Pi_{d, \bar 1}$, the formula~(\ref{ruelle}) appears for the
first time in~\cite{Leij0}. Note that its proof in~\cite{Leij0} is
formal, and in this sense "mysterious", as it is pointed out
there. Our proof resolves in some way this "mystery". It is based
on the Teichmuller theory of rational
maps~\cite{mcmsul},~\cite{MSS}. The bridge between this theory and
our problem is provided by the following property of the operator
$T$ from the space $L_1({\Ci})$ into itself: the adjoint operator
of $T$ is an operator $T^*$ in $L_\infty({\Ci})$
acting as follows: $T^*\nu=|f'|^2/f'^2 \nu\circ f$.
A fixed point $\nu$ of $T^*$ is called an {\it invariant Beltrami form} 
of $f$, see~\cite{MSS},~\cite{mcmsul}.
To be more precise, we will make use of the following. 
A backward invariant Beltrami form on a set $V$ is a function
$\mu\in L_\infty(V)$, for which
$\mu(f(x))|f'(x)|^2/(f'(x))^2=\mu(x)$, for a.e. $x\in f^{-1}(V)$ .
For every function $\psi$, which is integrable on $V$, we then
have (by change of variable):
\begin{equation}\label{invruelle}
\int_{V}\mu(z) T\psi(z)d\sigma_z=\int_{f^{-1}(V)}\mu(z)
\psi(z)d\sigma_z.
\end{equation}
Here and below $d\sigma_z$ denotes the area element on the $z$-plane.
We have similarly that 
$T$ is a contraction in a sense that
\begin{equation}\label{rcontra}
\int_{V}|T\psi|d\sigma\le\int_{f^{-1}(V)}|\psi|d\sigma.
\end{equation}
\subsection{Cusps}\label{cusp}
Here we consider the remaining case $\rho=1$, under the
assumption that the periodic orbit is
{\it non-degenerate}.
In other words, we assume that the 
periodic orbit $O=\{b_1,...,b_n\}$ of $f$ of the exact
period $n$ has the multiplier $\rho=1$, and $(f^n)''(b_j)\not=0$,
for some (hence, for any) $j=1,...,n$.
Then, for any polynomial $g$, which is close to $f$, the map
$g$ in a small neighborhood of $O$ has either precisely two different periodic
orbits $O^\pm_g$ of period $n$ with multipliers
$\rho^\pm\not=1$, or precisely one periodic orbit $O_g$
of period $n$ with the multiplier $1$.

Now, assume that $f\in \Pi_{d, \bar p}^q$, and let $f_i$, $i=1,2,...$,
be any sequence of maps from $\Pi_{d, \bar p}$, such that
$f_i\to f$, $i\to \infty$. We assume that each $f_i$ has a periodic orbit
$O_i$ near $O$, such that
its multiplier $\rho_i\not=1$. In other words, 
the orbit $O_i$
is either $O^+_{f_i}$ or $O^-_{f_i}$.
Introduce
\begin{equation}\label{b-cusp}
\hat B_i(z)=(1-\rho_i)B_{O_i}(z)=
\sum_{b\in O_i} \{\frac{\rho_i(1-\rho_i)}{(z-b)^2}+
\frac{(f_i^n)''(b)}{z-b}\}.
\end{equation}
As $i\to \infty$, we have obviously that $\hat B_i$ tend
to a rational function $\hat B=\hat B_O$, where
$$\hat B(z)=\sum_{b\in O}\frac{(f^n)''(b)}{z-b}.$$
Now, multiplying both hand-sides of~(\ref{ruelle})
for $f_i$ and $B_{O_i}$ by $1-\rho_i$ and passing to the
limit as $i\to \infty$, we get: 
\begin{prop}\label{maincusp}
For every $j=1,...,p$, the following finite limit exists:
\begin{equation}\label{partialneutral}
\frac{\hat{\partial} \rho}{\partial v_j}
:=\lim_{i\to \infty}(1-\rho_i)\frac{\partial \rho_i}{\partial v_j}.
\end{equation} 
Then we have:
\begin{equation}\label{ruelleneutral}
\hat B(z)-(T\hat B)(z)= \sum_{j=1}^{p}\frac{\hat{\partial} \rho}{\partial
v_j}\frac{1}{z-v_j}.
\end{equation}
The formula~(\ref{partialv}) holds, where one
replaces $\rho$ and $B$ by $\hat\rho$ and $\hat B$
respectively.
Furthermore, if $f$ and $f_i$ are in $\Pi_{d, \bar p}^q$, then,
for every $j=1,...,p$, there exists a finite limit
\begin{equation}\label{partialneutralV}
\frac{\hat{\partial}^V \rho}{\partial v_j}
:=\lim_{i\to \infty}(1-\rho_i)\frac{\partial^V \rho_i}{\partial v_j},
\end{equation} 
and
\begin{equation}\label{ruelleVcusp}
\hat B(z)-(T\hat B)(z)= \sum_{k=1}^{q}\frac{\hat{\partial}^V \rho}{\partial
V_k}\frac{1}{z-V_k}.
\end{equation}
\end{prop}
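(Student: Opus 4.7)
The plan is to take the identity~(\ref{ruelle}) of Theorem~\ref{main} applied to each $f_i$ and $B_{O_i}$, multiply through by $1-\rho_i$, and pass to the limit as $i\to\infty$, exploiting the convergence $\hat B_i\to\hat B$ recorded just above the statement. That rewrites~(\ref{ruelle}) as
\[
\hat B_i(z) - (T_{f_i}\hat B_i)(z) = \sum_{j=1}^{p}(1-\rho_i)\frac{\partial \rho_i}{\partial v_j}\cdot\frac{1}{z - v_j(f_i)},
\]
so the entire question reduces to showing that each scalar $(1-\rho_i)\,\partial\rho_i/\partial v_j$ has a limit, and that both sides of this equation converge to the claimed expressions.

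For the coefficients I would invoke the contour-integral form of~(\ref{partialv}) for $f_i$ and absorb the factor $1-\rho_i$ into the integrand, obtaining
\[
(1-\rho_i)\frac{\partial \rho_i}{\partial v_j} = -\frac{1}{2\pi i}\oint_{|w-c_j(f_i)|=r}\frac{\hat B_i(w)}{f_i'(w)}\,dw.
\]
The multiplier identity $\rho=\prod_{k=1}^{n}f'(b_k)=1$ forces $f'(b_k)\neq 0$ for every $b_k\in O$, so the critical points $c_j(f)$ lie at positive distance from $O$. For $r$ small and all $i$ large, the orbit $O_i$ likewise stays away from the circle $\{|w-c_j(f)|=r\}$; since the poles of $\hat B_i$ cluster only on $O$, one has $\hat B_i\to\hat B$ uniformly on this circle, while $f_i'\to f'$ uniformly with $f'$ nonvanishing on the closed disc. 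The contour integrals therefore converge to $-\frac{1}{2\pi i}\oint \hat B(w)/f'(w)\,dw$, which gives simultaneously the existence of the limit in~(\ref{partialneutral}) and the analogue of~(\ref{partialv}) with $B$ and $\partial\rho/\partial v_j$ replaced by $\hat B$ and $\hat\partial\rho/\partial v_j$ respectively. The derivative form in~(\ref{partialv}) is the residue expansion of the same integral and therefore passes to the limit as well.

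For~(\ref{ruelleneutral}) itself, the right-hand side of the displayed equation converges, by the previous step together with $v_j(f_i)\to v_j(f)$, to $\sum_{j=1}^{p}\frac{\hat\partial \rho}{\partial v_j}\cdot\frac{1}{z-v_j(f)}$. On the left, $\hat B_i\to\hat B$ uniformly on compacta disjoint from $O$, and for $z$ outside the finite set $\{v_j(f)\}\cup O\cup f^{-1}(O)$ the preimages $f_i^{-1}(z)$ depend continuously on $i$, so each summand $\hat B_i(w)/(f_i'(w))^2$ in $T_{f_i}\hat B_i(z)$ converges to $\hat B(w)/(f'(w))^2$; consequently $T_{f_i}\hat B_i(z)\to T_f\hat B(z)$ on a dense open set, which suffices since both sides of~(\ref{ruelleneutral}) are rational in $z$. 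The $V$-statements~(\ref{partialneutralV}) and~(\ref{ruelleVcusp}) are obtained by the same procedure starting from~(\ref{ruelleV}) inside $\Pi_{d,\bar p}^q$, or equivalently by combining the already proved $v$-version with~(\ref{vV}). The only point requiring genuine care is the uniform separation of critical points from the orbit, which the hypothesis $\rho=1$ delivers automatically.
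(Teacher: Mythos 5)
Your proposal takes exactly the route the paper takes: multiply the identity~(\ref{ruelle}) for $f_i$ and $B_{O_i}$ by $1-\rho_i$, use $\hat B_i\to\hat B$, and pass to the limit. The paper dispatches the whole proposition with that single sentence; you are filling in the details of the same argument, and the use of the contour-integral form of~(\ref{partialv}) to pin down the limit of the individual coefficients $(1-\rho_i)\,\partial\rho_i/\partial v_j$ is the right way to handle the possibility that some critical values coincide (in which case one cannot read off the coefficients from the rational function on the right-hand side alone).

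One small factual slip: you write that $f'$ is nonvanishing on the \emph{closed disc} around $c_j$. Of course $f'(c_j)=0$, so it is nonvanishing only on the bounding circle; but that is precisely what is needed for the uniform convergence of the integrand on the contour, and the residue at the interior zero is what produces the formula, so the argument stands once that phrase is corrected. You should also make explicit that the moving contour $\{|w-c_j(f_i)|=r\}$ can be replaced by a fixed contour $\{|w-c_j(f)|=r\}$ for $i$ large (there are no poles between them), which is implicitly what justifies passing to the limit in the contour integral.
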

\begin{com}\label{notzero}
We will see (take $r=1$ in the next Theorem~\ref{attr}) that
the vectors $\{\hat{\partial} \rho/\partial v_j\}_{j=1}^p$
and $\{\hat{\partial}^V \rho/\partial V_j\}_{j=1}^q$ are non-zero.
\end{com}
\subsection{Multipliers and local coordinates}
Theorem~\ref{main}, Proposition~\ref{maincusp} 
and the contraction property of $T$ yield the following.
\begin{theo}\label{attr}
Suppose that $f\in \Pi_{d, \bar p}^q$ has a collection
$O_1,...,O_r$ of $r$ different periodic orbits with the
corresponding multipliers $\rho_1,...,\rho_r$, such that each
$O_j$ is non-repelling: $|\rho_j|\le 1$, $j=1,...,r$. 
Assume that, if, for some $j$, $\rho_j=1$, then the periodic
orbit $O_j$ is non-degenerate. Denote by
$\tilde{\partial}^V \rho_j/\partial V_k$ the 
$\partial^V \rho_j/\partial V_k$ iff $\rho_j\not=1$ and
$\hat\partial^V \rho_j/\partial V_k$ iff $\rho_j=1$.
With these notations,  
introduce the following matrix ${\bf O}$:
\begin{equation}\label{mdelta}
{\bf O}=(\frac{\tilde{\partial}^V \rho_j}{\partial V_1},...,
\frac{\tilde{\partial}^V \rho_j}{\partial V_q})_{1\le j\le r}.
\end{equation}
Assume furthermore that, for every $j=1,...,r$, either $\rho_j\not=0$, or, if
$\rho=0$, then the periodic orbit $O_j$ contains a single critical
point, and this critical point is simple. Then the rank of the
matrix ${\bf O}$ is equal to $r$, that is, maximal.
\end{theo}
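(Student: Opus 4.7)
The plan is to argue by contradiction, converting a putative linear dependence among the rows of $\mathbf{O}$ into a nonzero fixed point of $T$ of a restricted form and then ruling out such fixed points via the contraction (\ref{rcontra}). Suppose $\mathrm{rank}(\mathbf{O}) < r$; pick complex numbers $\lambda_1,\ldots,\lambda_r$, not all zero, with $\sum_{j=1}^r \lambda_j\, \tilde{\partial}^V \rho_j/\partial V_k = 0$ for every $k=1,\ldots,q$, and set $\Psi = \sum_{j=1}^r \lambda_j\, \tilde B_j$, where $\tilde B_j = B_{O_j}$ when $\rho_j \neq 1$ and $\tilde B_j = \hat B_{O_j}$ when $\rho_j = 1$. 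Forming the corresponding linear combination of (\ref{ruelleV}) and (\ref{ruelleVcusp}), the right-hand side vanishes, so $\Psi - T\Psi = 0$ and $\Psi$ is a fixed point of $T$.

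The rational functions $\tilde B_j$ are linearly independent, because their poles sit on the disjoint cycles $O_j$ and none of them is identically zero. Indeed, when $0<|\rho_j|\le 1$ and $\rho_j \neq 1$ the double-pole coefficients of $B_{O_j}$ equal $\rho_j\neq 0$; when $\rho_j = 0$ with a unique simple critical point in $O_j$, a short chain-rule computation shows $(f^n)''(b)\neq 0$ at each $b\in O_j$ and hence the simple-pole residues of $B_{O_j}$ are nonzero; and when $\rho_j = 1$, non-degeneracy gives the same for $\hat B_{O_j}$. Consequently, once $\Psi\equiv 0$ is established, each $\lambda_j$ must vanish, the desired contradiction.

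The remaining, and main, step is to deduce $\Psi\equiv 0$ from $T\Psi = \Psi$. Since $f$ is a polynomial, $V = \{|z|<R\}$ is forward invariant ($f^{-1}(V)\subset V$) for $R$ large and contains all poles of $\Psi$. I would iterate $T^n\Psi = \Psi$ and feed it into (\ref{rcontra}) on $V$ with small neighborhoods of the poles of $\Psi$ excised; the resulting equality in the contraction is a rigid alignment condition on the values $\Psi(w)/f'(w)^2$ at the preimages of a.e.~$z$. To translate this rigidity into $\Psi = 0$, I would use the Teichm\"uller-type theory of \cite{MSS} and \cite{mcmsul}: for each non-repelling cycle $O_{j_0}$ one constructs a backward-invariant Beltrami form $\mu_{j_0}$ (from the Koenigs coordinate when $0<|\rho_{j_0}|<1$, the Boettcher coordinate when $\rho_{j_0} = 0$ with a unique simple critical point, and an appropriate deformation in the neutral/parabolic cases). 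The invariance $T^*\mu_{j_0} = \mu_{j_0}$ combined with $T\Psi = \Psi$ and (\ref{invruelle}) forces $\int \mu_{j_0}\Psi\,d\sigma$ to equal itself after transport; however, a direct residue computation in the linearizing chart evaluates this integral as a nonzero multiple of $\lambda_{j_0}$ (because only $\tilde B_{j_0}$ is singular on the support of $\mu_{j_0}$), while iteration of (\ref{rcontra}) together with the alignment condition forces the same integral to vanish, giving $\lambda_{j_0} = 0$ for every $j_0$. The hardest piece is constructing these localized invariant Beltrami forms uniformly across the four cycle types (attracting, super-attracting, neutral non-parabolic, parabolic) and verifying the nonvanishing of each residue; this is precisely where the hypothesis on super-attracting orbits (a single simple critical point) is used, since otherwise the Boettcher residue computation degenerates.
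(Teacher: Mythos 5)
Your opening moves are the same as the paper's: assume $\mathrm{rank}\,{\bf O}<r$, take a non-trivial linear combination $\Psi=\sum_j\lambda_j\tilde B_j$, invoke~(\ref{ruelleV}) and~(\ref{ruelleVcusp}) to get $\Psi-T\Psi=0$, and observe that the $\tilde B_j$ are pairwise pole-disjoint and non-zero (checking the three sub-cases $\rho_j\neq 0,1$, $\rho_j=0$ with a simple critical point, $\rho_j=1$ non-degenerate) so that $\Psi\not\equiv 0$. From here, however, your plan diverges from the paper's, and in a way that I do not think can be carried through as stated.

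The paper does \emph{not} try to deduce $\Psi\equiv 0$ by pairing against invariant Beltrami forms. Its mechanism is a direct mass-transport estimate: one excises the basin of infinity $B^*(R)$, the attracting basins, and disk-chains of radius $r$ around the parabolic cycles, to form $V_{r,R}$ on which $\Psi$ is integrable; then one compares $\int_{V_{r,R}}|\Psi|$ with $\int_{f^{-1}(V_{r,R})}|\Psi|$ using the inclusion of $f^{-1}(V_{r,R})$ into $V_{r,R}$ minus the fundamental annulus $f^{-1}(B^*(R))\setminus B^*(R)$ plus thin annuli $B(b,r)\setminus B(b,\lambda r)$ at the parabolic points. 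The annulus at $\infty$ carries a \emph{fixed} positive amount of $|\Psi|$-mass (for fixed $R$), while the parabolic contributions tend to $\sum C_{j'}\log\lambda^{-1}\to 0$ as $\lambda\to 1^-$. Combining with~(\ref{rcontra}) and $T\Psi=\Psi$ yields a contradiction. No invariant Beltrami forms are needed, and the polynomial structure at $\infty$ does all the work.

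Your alternative proposal has several genuine gaps. First, the pairing $\int\mu_{j_0}\Psi\,d\sigma$ that you want to compute by a ``residue calculation in the linearizing chart'' is \emph{divergent} whenever $\rho_{j_0}\neq 0,1$: $\Psi$ has a double pole at each point of $O_{j_0}$, while $\mu_{j_0}$ is merely bounded, so $\mu_{j_0}\Psi\notin L^1$ on any neighborhood of the cycle. To make any sense of such a pairing one has to regularize over fundamental regions (exactly what Lemma~\ref{linspeed} does in the \emph{forward} direction, in the proof of Theorem~\ref{teich}), and you do not say how. Second, the claim that ``iteration of~(\ref{rcontra}) together with the alignment condition forces the same integral to vanish'' is not a computation; the equality case in the triangle inequality tells you the preimage contributions $\Psi(w)/f'(w)^2$ share a common argument for a.e.~$z$, but it does not give $\int\mu_{j_0}\Psi=0$ — this is an unsupported leap. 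Third, in the parabolic case there is no linearizing coordinate; constructing a backward-invariant Beltrami form localized near a parabolic cycle requires Fatou coordinates and is genuinely more delicate than you acknowledge. Finally, nowhere do you exploit the positive mass lost at the fundamental annulus of $\infty$, which is the crux of the polynomial argument; without it you would be trying to prove a strictly harder statement (closer to the rational-map setting of Proposition~\ref{rcontr}), and even there the paper's proof does not go through a Beltrami pairing but through the equality case of the contraction plus a local series expansion at the cycle points. As written, your concluding step is a sketch of an idea rather than a proof, and the two central steps on which it rests (the regularized pairing and the forced vanishing) are missing.
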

\begin{com}\label{fatou}
By the Fatou-Douady-Shishikura inequality, see~\cite{CG},~\cite{shi}, 
$r\le q$, the number of
geometrically different critical values of the polynomial $f$.
See also Comment~\ref{last} as well as Comments~\ref{rfatou} and~\ref{rlast}.
\end{com}
\begin{com}\label{newcoord}
Assume in Theorem~\ref{attr} that $\rho_j\not=1$
for $j=1,...,r$.
Applying in this case 
the Implicit Function theorem to the matrix ${\bf O}$ of
rank $r$, we obtain that one can define a new local coordinate system in
$\Pi_{d, \bar p}^q$ by replacing $r$ coordinates in $\bar
V=(V_1,...V_q)$ by $r$ multipliers of different non-repelling
periodic orbits with the multipliers not equal to $1$.
In fact, the case when some $\rho_j=1$ can also be included
replacing $\rho_j$ by $-\{(1-\rho_j^+)^2+(1-\rho_j^-)^2\}/4$
in a neighborhood of $f$, where $\rho_j^{\pm}$ are the multipliers
of the periodic orbits $O_j^{\pm}$ of nearby maps
defined in the previous Sect.~\ref{cusp}.
\end{com}
\begin{com}\label{dhs}
Theorem~\ref{attr} contains as a particular case the 
Douady-Hubbard-Sullivan Theorem: the multiplier map of an
attracting periodic orbit of the map $z^2+v$ is an isomorphism of
the corresponding hyperbolic component of the Mandelbrot set onto the unit
disk~\cite{DH1},~\cite{CG},~\cite{mcmsul}. For other generalizations of this
important result for polynomials, see~\cite{milpol}. Note that the case $f\in
\Pi_{d, \bar p}^q$ and $r=q$, under the assumption that the
periodic orbits $O_1,...,O_r$ are attracting follows also from a
general result on hyperbolic polynomials proved in~\cite{milpol}.
Note however that the method of quasiconformal surgery used
in~\cite{DH1},~\cite{CG},~\cite{mcmsul},~\cite{milpol} breaks down in 
the presence
of a neutral periodic orbit. Our result is completely general. On
the other hand, it is local.
\end{com}
\begin{com}\label{cuspqu}
Consider another particular case:
$f_{v_0}(z)=z^d+v_0$ and $O_0$ is a non-degenerate periodic orbit
of $f_{v_0}$ with the multiplier $\rho_0=1$. Then the matrix ${\bf O}$
is one-dimensional and consists of
the number 
\begin{equation}\label{neutralqu}
\hat\rho'(v_0)
:=\lim_{v\to v_0}(1-\rho(v))\rho'(v),
\end{equation} 
where $\rho(v)$ is the multiplier of a periodic orbit
of $f_v(z)=z^d+v$, $v\not=v_0$, which is close to $O_0$.
By Theorem~\ref{attr}, $\hat\rho'(v_0)\not=0$, which means
that the corresponding hyperbolic component
of the connectedness locus of the family $f_v$ has a
cusp at the point $v_0$. The latter is proven in~\cite{DH2} (for $d=2$)
using global considerations. 
\end{com}
\section{Theorem~\ref{main}}\label{thm}
\subsection{Proof of Proposition~\ref{local} }\label{pr1}
The map $\pi$ is locally well-defined and holomorphic,
because the coefficients of $f$ are holomorphic functions of $\bar
c$. It maps a neighborhood of $\bar c$ in ${\bf C}^p$ into
${\bf C}^p$. Therefore, to prove that $\pi$ is locally biholomorphic,
it is enough to show that $\pi$ is a local
injection (see e.g.~\cite{whit}, Chapter 4, Theorem 1V).
On the other hand, the latter follows essentially from
the Monodromy Theorem. Here is a detailed proof.

Fix $f_0\in\Pi_{d, \bar p}$. It has $p$ geometrically different
critical points $c_j(f_0)$ and $q_0=q(f_0)$ geometrically
different critical values $v_1^0,...,v^0_{q_0}$, $q_0\le p$.
Choose a covering of the Riemann sphere $\hat{\Ci}$ by a finite
collection of (open) balls $B_1,...,B_m$ centered at some points
$a_1,...,a_m$, as follows.

(1) For $1\le k\le q_0$, the ball $B_k$ is centered at the
critical value $v^0_k$ of $f_0$, i.e., $a_k=v^0_k$, and
the closures $\bar{B_k}, \bar{B_i}$, for $1\le i<k\le q_0$, are pairwise
disjoint.

(2) $B_m$ is centered at infinity: $a_m=\infty$. For every $1\le
k\le m$, the center $a_k$ of $B_k$ is away from the closure $\bar
B_i$ of any other ball $B_i$, $k\not=i$.

(3) For every $1\le k\le m$, and every component $U$ of
$f_0^{-1}(B_k)$, the following holds: either $U$ is disjoint from
the set of critical points of $f_0$, or $U$ contains one and only
one critical point $c_j(f_0)$ of $f_0$. In the former case, the
map $f_0: U\to B_k$ is univalent, and in the latter case, $f_0:
U\to B_k$ is an $m_j+1$-branched covering, with a single critical
point at $c_j(f_0)$.

By this, every component $U$ of $f_0^{-1}(B_k)$ contains one and
only one preimage $w_U$ of $a_k$ by $f_0$. Call $w_U$ the "center"
of the component $U$.
Denote by $d(z,w)$ the spherical distance between $z,w$ in the
Riemann sphere.
Consider any $f\in \Pi_{d,\bar p}$, such that $\bar f$ and $\bar
f_0$ are close points in ${\Ci}^{d-1}$. By definition, $f$ has
$p$ geometrically different critical points $c_j(f)$ with the
corresponding multiplicities $m_j$, and $c_j(f)$ is close to
$c_j(f_0)$, $1\le j\le p$. Note however, that the number $q(f)$ of
geometrically different critical values of $f$ can be larger than
the number $q_0$ of geometrically different critical values of
$f_0$. We have, by the above, similar properties for preimages of
$B_k$ by $f$:

(1f) For every $1\le k\le m$, and every component $U(f)$ of
$f^{-1}(B_k)$ the following holds. The set $U(f)$ contains one and
only one "center" $w_U$ of some component $U$ of $f_0^{-1}(B_k)$.
Moreover, $U(f)$ and $U$ are close (in, say, Hausdorff
distance).

(2f) There are two possibilities: (a) if $w_U\in U(f)$ is not a
critical point of $f_0$, then the map $f: U(f)\to B_k$ is
univalent, (b) if $w_U=c_j(f_0)$, for some $j$, then $f: U\to B_k$
is an $m_j+1$-branched covering, with the single critical point at
$c_j(f)$.

To prove the injectivity, consider two maps $f_1$, $f_2$ in
$\Pi_{d, \bar p}$, so that $\bar c(f_1)$, $\bar c(f_2)$ are close
to $\bar c(f_0)$, and assume that
\begin{equation}\label{v1=v2}
\bar v(f_1)=\bar v(f_2).
\end{equation}
We know that $\bar f$ is a continuous (even holomorphic)
function of $\bar c$. Hence, $\bar f_1$, $\bar f_2$ are close to
$\bar f_0$, too. We get from (1f)-(2f):

(3f) fix $r>0$ small enough (smaller than half of the spherical
distance between any point $w_U$ and any component of
$f_0^{-1}(B_k)$ other than $U$). For every $1\le k\le m$, there
exists $\epsilon_k>0$ with the following property. For a component
$U$ of $f_0^{-1}(B_k)$, if $\bar f_1$, $\bar f_2$ are close enough
to $\bar f_0$, and if there is a point $\hat z\in U$, such that
$d(\hat z, w_U)>r$ and $d(F\circ f_1(\hat z),\hat
z)<\epsilon_k/2$, for some branch $F$ of $f_2^{-1}$ in $B_k$,
then:

(i) $F\circ f_1$ is a (well-defined) holomorphic function in the
component $U(f_1)$ which contains $w_U$,

(ii) $d(F\circ f_1(z),z)<\epsilon_k$, for all $z\in U(f_1)$.

Now, let $\bar f_i$, $i=1,2$ be close enough to $\bar f_0$. Fix
$\epsilon_*$ positive and less than $\epsilon_k/2^{md}$, for all
$k$. Let us start with a branch $F_\infty(z)=z^{1/d}+...$ of $f_2^{-1}$
in $B_m$, such that $g=F_\infty\circ f_1$ is well-defined near infinity,
and $g(z)=z+O(1/z)$ at $\infty$.
Then $g$ extends to a holomorphic function in $U_\infty(f_1)$. 
Since $\bar f_i$, $i=1,2$, and $\bar
f_0$ are close enough, then $d(g(\hat z),\hat z)<\epsilon_*$, for
any $\hat z$ in the intersection of $U_\infty(f_1)$ and any
component $V(f_1)$ of any other $f_1^{-1}(B_k)$. By (3f), $g$
extends to a holomorphic function along every chain of components of
$f_1^{-1}(B_k)$ that form a connected set, therefore, $g$ is
holomorphic in $\Ci$. By the normalization, it is the identity map, which
proves that $f_1=f_2$.

The second part is obvious because if the (finite) 
critical points of $f$ are simple, then $\bar f$ is a local
biholomorphic map of $\bar c$. By the first part, we are done.
\subsection{Reductions}\label{red}
By the continuity of functions $\rho$ and $\partial \rho/\partial
v_j$ in $\bar v$, it is enough to prove the formulae of
Theorem~\ref{main} assuming that $\rho\not=0$.
\paragraph{The identity.}
Denote
\begin{equation}\label{A-func}
A(z)=\frac{1}{\rho} B(z)=\sum_{k=1}^n
\frac{1}{(z-b_k)^2}+\frac{1}{\rho(1-\rho)} \sum_{k=1}^n
\frac{(f^n)''(b_k)}{z-b_k}.
\end{equation}
First, we will prove the following general identity about 
the rational functions fixing infinity.
\begin{theo}\label{rformula}
Let $f$ be any rational function so that $\infty$ is a fixed
point (possibly, superattracting) of $f$. Let $c_j$, $j=1,...,p$ be all
geometrically different finite critical points of $f$, and assume
that the corresponding critical values $v_j=f(c_j)$, $j=1,...,p$,
are also finite. Then there are numbers $L_1,...,L_p$, such that
\begin{equation}\label{rruellehyp}
A(z)-(TA)(z)= \sum_{j=1}^{p}\frac{L_j}{z-v_j}.
\end{equation}
For $j=1,...,p$, the coefficient
\begin{equation}\label{lj}
L_j=-\frac{1}{(m_j-1)!}\frac{d^{m_j-1}}{dw^{m_j-1}}|_{w=c_j}(\frac{A(w)}{Q_j(w)}),
\end{equation}
where $Q_j$ is a local analytic function near $c_j$ defined by
$f'(z)=(z-c_j)^{m_j}Q_j(z)$, so that $Q_j(c_j)\not=0$. In
particular, if $c_j$ is simple, then
$L_j=-A(c_j)/f''(c_j)$.
\end{theo}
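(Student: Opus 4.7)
\textbf{Proof plan for Theorem~\ref{rformula}.}

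The plan is to identify $A-TA$ as a rational function on $\hat{\Ci}$, match its principal parts at every finite pole, verify decay at $\infty$, and finish by Liouville. Three things must be checked: behaviour at each critical value $v_j$, cancellation at each orbit point $b_k$, and behaviour at $\infty$. For the first, I will use a contour representation for the sum of $A(w)/(f'(w))^{2}$ over preimages clustering at $c_j$: for $z$ in a small neighbourhood of $v_j$ and a small disk $U_j$ around $c_j$, the argument principle gives
\[ \sum_{w\in U_j,\,f(w)=z}\frac{A(w)}{(f'(w))^{2}}\;=\;\frac{1}{2\pi i}\oint_{|w-c_j|=r}\frac{A(w)\,dw}{f'(w)(f(w)-z)}\;-\;\mathrm{Res}_{w=c_j}\frac{A(w)}{f'(w)(f(w)-z)}, \]
where the contour integral is holomorphic in $z$ near $v_j$. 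Using $f'(w)=(w-c_j)^{m_j}Q_j(w)$ and the factorisation $f(w)-z=u^{m_j+1}\tilde Q_j(c_j+u)-\zeta$ with $u=w-c_j$, $\zeta=z-v_j$, I will expand $1/(f(w)-z)$ as a geometric series in $u^{m_j+1}\tilde Q_j/\zeta$. The key observation is that only the $n=0$ term survives upon taking the $(m_j-1)$-st derivative at $u=0$: every higher term carries a factor $u^{n(m_j+1)}$ with $n(m_j+1)>m_j-1$, so its $(m_j-1)$-st derivative at $u=0$ vanishes identically. This collapse yields the exact identity $\mathrm{Res}_{w=c_j}=L_j/(z-v_j)$ with $L_j$ as in~(\ref{lj}), and hence $(A-TA)$ has a simple pole at $v_j$ with residue $L_j$.

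Second, after the reduction to $\rho\ne 0$ the orbit avoids the critical set, so the branch of $f^{-1}$ sending $b_k$ to $b_{k-1}$ is regular. A direct Laurent expansion shows that the double-pole coefficients of $A$ and $TA$ at $b_k$ match automatically; the simple-pole coefficients match iff
\[ (f^{n})''(b_k)\;=\;\frac{(f^{n})''(b_{k-1})}{f'(b_{k-1})}\;-\;\frac{\rho(1-\rho)\,f''(b_{k-1})}{f'(b_{k-1})^{2}}. \]
I plan to derive this by differentiating the two decompositions $f^{n}=f^{n-1}\circ f$ at $b_{k-1}$ and $f^{n}=f\circ f^{n-1}$ at $b_k$, and eliminating $(f^{n-1})''(b_k)$. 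The factor $1/(\rho(1-\rho))$ built into $A$ is chosen precisely so that this cancellation holds; this is where the hypothesis $\rho\ne 1$ enters the proof.

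Finally, since $\infty$ is fixed by $f$, the preimages of a large $z$ cluster either at $\infty$ itself or at the finite poles of $f$; at each such preimage $w$ one has $A(w)=O(1/w)$ while $(f'(w))^{2}$ blows up fast enough to force $TA(z)\to 0$. Combined with $A(\infty)=0$, this gives $(A-TA)(\infty)=0$. The rational function $(A-TA)(z)-\sum_{j}L_j/(z-v_j)$ then has no poles on $\hat{\Ci}$ and vanishes at $\infty$, hence is identically zero, which is~(\ref{rruellehyp}). The main technical obstacle is the first step: unpacking the contour/residue split, and verifying the collapse of the geometric series to the clean closed form for $L_j$.
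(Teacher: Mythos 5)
Your plan is correct and follows essentially the same route as the paper: both proofs identify $A-TA$ as a rational function by computing $T$ of the relevant Cauchy kernels via a contour integral of the form $\oint A(w)\,dw/(f'(w)(f(w)-z))$, match principal parts at the critical values and the orbit points, and check decay at $\infty$. Two of your steps are, however, cleaner than what is actually in the paper and worth pointing out.

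First, for multiple critical points the paper does \emph{not} expand directly at $c_j$; it proves the identity for simple critical points (Lemma~\ref{rT}) and then degenerates a nearby map $f_n$ with $m_j$ simple critical points onto $f$, taking a limit of the resulting residues. Your geometric-series collapse at $u=w-c_j$ --- that every $n\ge 1$ term in $\sum_n (u^{m_j+1}\tilde Q_j/\zeta)^n$ carries $u^{n(m_j+1)}$ with $n(m_j+1)>m_j-1$ and therefore cannot contribute to the $(m_j-1)$-st Taylor coefficient --- gives the residue $L_j/(z-v_j)$ in one stroke, with no approximation argument. Second, the paper establishes the orbit recurrence $(f^n)''(b_k)=(f^n)''(b_{k-1})/f'(b_{k-1})-\rho(1-\rho)f''(b_{k-1})/(f'(b_{k-1}))^2$ by a somewhat opaque direct expansion of $(f^n)''(b_1)f'(b_1)-f''(b_1)\rho(1-\rho)$; your plan to differentiate $f^n=f^{n-1}\circ f$ at $b_{k-1}$ and $f^n=f\circ f^{n-1}$ at $b_k$ and eliminate $(f^{n-1})''(b_k)$ does work (I checked the computation) and is more transparent, making visible where $\rho\ne 1$ enters. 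Two small points: the contour/residue split is the residue theorem, not the argument principle; and the whole scheme tacitly assumes no orbit point lies on a circle $|w-c_j|=r$ and no critical value coincides with an orbit point --- the same genericity the paper secures by its hyperbolic reduction, so you should state it.
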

\paragraph{Reduction to the hyperbolic case.}
Here we show that it is enough to prove Theorem~\ref{main} only
for those $f$ from $\Pi_{d, \bar p}$ that satisfy the following
conditions:

(1) $f$ is a hyperbolic map, moreover, $O$ is an attracting
periodic orbit of $f$, which attracts all critical points $c_j$,
$j=1,...,p$,

(2) $f$ has no critical relations between critical points except
for the constant multiplicities of the critical points themselves:
$f^n(c_i)=f^m(c_j)$ if and only if $i=j$ and $m=n$.

Indeed, assume that Theorem~\ref{main} holds for this subset of
maps from $\Pi_{d, \bar p}$. Note that it is open in $\Pi_{d, \bar
p}$. Given now any $f$ as in Theorem~\ref{main}, we find a real
analytic simple path $\gamma: [0,1]\to \Pi_{d, \bar p}$,
$\gamma(t)=g_t$, which obeys the following properties:
(i) $g_0=f$,
(ii) $g_1$ satisfies conditions (1)-(2),
(iii) the analytic continuation $O_t$ (a periodic orbit of $g_t$)
of the periodic orbit $O$ along the path is well-defined (i.e., the
multiplier of $O_t$ is not $1$ for $t\in [0,1]$), and $O_1$, a
periodic orbit of $g_1$, is attracting. All critical points of
$g_1$ are attracted to $O_1$.

Let us for a moment take for granted the existence of such path.
Since the critical points of $g_t=\gamma(t)$ change continuously
along $\gamma$ and don't collide, the coordinate system $\bar v$
can be chosen changing continuously in a whole neighborhood of
$\gamma$. Fix $z$. Denote by $\Delta(z, \overline v)$ the
difference between the left and the right hand sides
of~(\ref{ruelle}). It is an analytic function in $\bar v$ in a
neighborhood of every point $\bar v(g_t)$, $t\in [0,1]$. On the
other hand, by the assumption that the theorem holds for maps
satisfying (1)-(2), it is identically zero in a neighborhood of
$\bar v(g_1)$. By the Uniqueness Theorem for analytic functions,
$\Delta(z, \bar v)=0$.

Let us show that the path $\gamma$ as above does exist. First, we
need the following fact about the parameter space of the
unicritical family $p_c(z)=z^d+c$. Consider any repelling periodic
orbit $Q$ of the map $p_0(z)=z^d$. There is a real analytic simple path
$p_{c_Q(t)}$, $c_Q:[0,1]\to \Ci$, such that there exists an
analytic continuation of $Q$ to a periodic orbit $Q_t$ of
$p_{c_Q(t)}$, $0\le t\le 1$, such that $Q_0=Q$ and $Q_1$ is
attractive. Indeed, if we assume the contrary that such a path
does not exist, then the Monodromy Theorem ensures the
existence of an analytic continuation of $Q$ to the whole complex
plane. Then the multiplier of this continuation is an entire
function in $c$ that omits values in the unit disk, which is
impossible. Thus the path $p_{c_Q(t)}$ as above does exist.

Note also that it is easy to find maps from $\Pi_{d, \bar p}$ near
every $p_c$.

Let us come back to the construction of the path $\gamma$.
Firstly, we connect $f$ to the map $p_0$ by a path $\gamma_0$ from
$[0,1]$ to the space of polynomials of degree $d$, so that
$\gamma_0(t)\in \Pi_{d, \bar p}$ for $0\le t<1$. We construct
$\gamma_0$ explicitly as follows. For $\tau\in \Ci$, set
$c_j(\tau)=(1-\tau)c_j(f)$, $1\le j\le p$. For every $\tau$,
define a polynomial
\begin{equation}\label{expl}
F_\tau(z)=(1-\tau)f(0)+d . \int_{0}^z
\Pi_{j=1}^p(w-c_j(\tau))^{m_j} dw.
\end{equation}
Note that $F_\tau\in \Pi_{d, \bar p}$ for every $\tau\not=0$. Then
a real analytic simple curve $\tau:[0,1]\to \Ci$, $\tau(0)=0$,
$\tau(1)=1$, can be chosen so that the analytic continuation $O_t$
of the periodic orbit $O$ of $f$ along the path
$\gamma_0(t)=F_{\tau(t)}$ exists, and $O_1$ is some periodic orbit
$Q$ of $p_0$. We proceed by a real analytic path $c_Q$ in the
parameter plane of $p_c$ that turns $Q$ into an attracting
periodic orbit of some $p_c$. Finally, we find the desired path
$\gamma$ in $\Pi_{d, \bar p}$ in a neighborhood of $c_Q\circ
\gamma_0$.
\paragraph{Hyperbolic maps}
Here we describe how to prove Theorem~\ref{main} for the
hyperbolic maps $f$. We assume that {\it $f\in \Pi_{d, \bar p}$
satisfies the conditions (1)-(2) of the previous paragraph.}
To clarify the meaning of the coefficients $L_j$ in~(\ref{rruellehyp}), 
we will use the
theory of quasiconformal deformations of rational maps~\cite{mcmsul}. The 
main technical
part is contained in the next Theorem~\ref{teich}. 
By a {\it Beltrami coefficient} we mean a measurable function $\nu(z)$
on the Riemann sphere, such that $|\nu(z)|\le k<1$ for almost every $z$.
Let $\nu(z, t)$ be an {\it analytic family of invariant Beltrami
coefficients}. By this we mean a family $\nu_t(z)=\nu(z, t)$
of Beltrami coefficients on the
Riemann sphere, which is
analytic in $t$ as a map from
a neighborhood of $t=0$ into $L_\infty(\C)$, 
and such that, for every $t$, $\nu_t$ 
is an invariant Beltrami form
of $f$. We always assume $\nu(z, 0)=0$.
Assume, additionally, $\nu(z,t)=0$ for $z$ in the basin of
infinity of $f$.
Let $h_t$ be an analytic in $t$ family of quasiconformal
homeomorphism in the plane tangent to $\infty$, so that $h_t$ has
the complex dilatation $\nu(z,t)$ 
(i.e., $\nu(t,z)=\frac{\partial h_t}{\partial \bar{z}}/
\frac{\partial h_t}{\partial z}$),
and $h_0=id$. Set
$f_t=h_t\circ f\circ h_t^{-1}$. It is well-known that then
$\{f_t\}$ is an analytic family of polynomials. Since $f_t$ is
conjugated to $f$, then $f_t\in \Pi_{d, \bar p}$. Let $O_t=h_t(O)$
be the corresponding attracting periodic orbit of $f_t$. Denote by
$\rho(t)$ its multiplier, and by $v_j(t)=h_t(v_j)$ the 
critical values of $f_t$.
Based on Theorem~\ref{rformula}, we derive:
\begin{theo}\label{teich}
\begin{equation}\label{teichpol}
\frac{\rho'(0)}{\rho}=\sum_{j=1}^{p} L_j v_j'(0).
\end{equation}
\end{theo}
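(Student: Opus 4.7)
The plan is to express both sides of~(\ref{teichpol}) as integrals against the $f$-invariant Beltrami form $\mu:=\partial_{\bar z}\dot h$, where $\dot h:=\partial_t h_t|_{t=0}$ satisfies $\dot h(z)=O(1/z)$ at infinity. For the right-hand side, the Cauchy--Green representation $\dot h(z)=-\frac{1}{\pi}\int \mu(y)/(y-z)\,d\sigma_y$, combined with $v_j'(0)=\dot h(v_j)$ (from $v_j(t)=h_t(v_j)$) and the identity of Theorem~\ref{rformula}, gives
\begin{equation}\label{teich-rhs}
\sum_{j=1}^{p} L_j\,v_j'(0) \;=\; -\frac{1}{\pi}\int \mu(y)\bigl(A(y)-(TA)(y)\bigr)\,d\sigma_y,
\end{equation}
the integrand being in $L^1$ since $A-TA$ has only simple poles at the critical values. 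The theorem thus reduces to showing $\rho'(0)/\rho = -\frac{1}{\pi}\int \mu(A-TA)\,d\sigma$.

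To evaluate the integral in~(\ref{teich-rhs}), I would use the $T^*$-invariance of $\mu$. Build a small forward-invariant neighborhood $W_\epsilon$ of $O$ from a Koenigs linearization $\phi\circ f^n=\rho\,\phi$ at $b_n$ (available since $|\rho|<1$ after the hyperbolic reduction): $f(W_\epsilon)\subsetneq W_\epsilon$, hence $W_\epsilon\subset f^{-1}(W_\epsilon)$. Setting $V_\epsilon=\C\setminus W_\epsilon$ and applying~(\ref{invruelle}),
$$\int_{V_\epsilon}\mu(A-TA)\,d\sigma \;=\; \int_{V_\epsilon}\mu A\,d\sigma-\int_{f^{-1}(V_\epsilon)}\mu A\,d\sigma \;=\; \int_{f^{-1}(W_\epsilon)\setminus W_\epsilon}\mu A\,d\sigma.$$
As $\epsilon\to 0$, the left side converges to $\int \mu(A-TA)\,d\sigma$. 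The set $f^{-1}(W_\epsilon)\setminus W_\epsilon$ decomposes as a fundamental annulus for $f^n$ inside the immediate basin of $O$, plus finitely many non-basin preimages of $W_\epsilon$ on which $A$ is bounded (by the no-critical-relation assumption) and whose total area is $O(\epsilon^2)$; this second piece vanishes in the limit.

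Finally, in the linearizing coordinate $w=\phi(z)$, the leading behavior $A(z)\sim(z-b_n)^{-2}$ together with the invariance law $\tilde\mu(\rho w)=(\rho/\bar\rho)\tilde\mu(w)$ for the pushed-forward Beltrami form yields $\mu(z)A(z)\,d\sigma_z=\tilde\mu(w)\,w^{-2}\,d\sigma_w+(\text{subleading})$ on the annulus; the subleading terms (from the simple-pole and regular parts of $A$) integrate to $O(\epsilon)$. The principal integral $\int_{|\rho|\epsilon<|w|<\epsilon}\tilde\mu(w)\,w^{-2}\,d\sigma_w$ is scale-invariant by the transformation law of $\tilde\mu$ and, tested against the explicit one-parameter model $\Phi_t(w)=|w|^{A(t)-1}\,w$ with $A(t)=1+\log(\rho(t)/\rho)/\log|\rho|$ (which realizes arbitrary infinitesimal multiplier changes of the linear attracting dynamics $w\mapsto\rho w$), evaluates to $-\pi\,\rho'(0)/\rho$ by a direct polar-coordinate calculation. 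Combining this with~(\ref{teich-rhs}) proves the theorem. The main obstacle is that $\mu A$ is not globally integrable because $A$ has double poles at the $b_k$'s, so the naive invariance $\int\mu\cdot TA=\int\mu A$ on the whole sphere would give the spurious answer $\rho'(0)=0$; the precise normalization of $A$, notably the factor $1/[\rho(1-\rho)]$ in its simple-pole coefficients, is exactly what makes $A-TA$ pole-free at the orbit points so that the regularized difference captures only the geometric annulus contribution encoding the multiplier drift.
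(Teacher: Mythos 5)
Your overall structure matches the paper's closely: regularize the integral of $\mu(A-TA)$ by excising a small forward-invariant neighborhood of $O$, use the $T$-invariance~(\ref{invruelle}) to pull $TA$ back and reduce to a fundamental annulus in the linearizing coordinate, and handle $v_j'(0)$ via the Cauchy--Green representation of $\dot h$ once the polynomial normalization at $\infty$ kills the affine terms. The one step where you genuinely diverge from the paper is the evaluation of $\int_{|\rho|\epsilon<|w|<\epsilon}\tilde\mu(w)\,w^{-2}\,d\sigma_w$. The paper isolates this as Lemma~\ref{linspeed} and proves it directly from the Ahlfors--Bers variational formula~(\ref{ahl}): one differentiates $\rho(t)\phi_t(w)=\phi_t(\rho w)$ at $t=0$, telescopes the resulting series using the $\rho$-invariance of $\hat\mu$, and arrives at $\rho'(0)/\rho=-\frac{1}{\pi}\int_{\hat C}\hat\mu(w)w^{-2}\,d\sigma_w$ with no further input. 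You instead appeal to an implicit Teichm\"uller-theoretic invariance: you observe the integral is scale-independent by the transformation law of $\tilde\mu$, compute it for the explicit model $\Phi_t(w)=|w|^{A(t)-1}w$ (and the polar-coordinate computation does indeed give $-\pi\rho'(0)/\rho$: the model's $\tilde\mu$ is $\frac{\rho'(0)}{2\rho\log|\rho|}\frac{w}{\bar w}$ and $\int_{|\rho|\epsilon<|w|<\epsilon}|w|^{-2}\,d\sigma=-2\pi\log|\rho|$), and then assert that testing on this one model suffices. That last assertion is where the gap is: to pass from the model $\tilde\mu$ to the actual $\tilde\mu$ coming from your family $h_t$, you need to know that both the integral pairing with $w^{-2}$ and the assignment $\tilde\mu\mapsto\rho'(0)$ are well-defined linear functionals on $\rho$-invariant Beltrami coefficients modulo \emph{infinitesimally trivial} ones, and that this quotient space is one-dimensional. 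That is exactly the infinitesimal Teichm\"uller theory of the quotient torus $\C^*/\rho^{\Z}$: $dw^2/w^2$ is the unique invariant holomorphic quadratic differential, and the pairing with it detects nontrivial deformations. It is true, but it is a substantive input that you invoke only by the phrase ``realizes arbitrary infinitesimal multiplier changes,'' which as written does not rule out two invariant $\tilde\mu$'s with the same $\rho'(0)$ but different annulus integrals. Either supply the Teichm\"uller argument explicitly (pair with the quadratic differential, show vanishing characterizes triviality, use one-dimensionality), or replace this step with the paper's direct computation via~(\ref{ahl}), which sidesteps the issue entirely. The rest of the proposal --- the integrability of $\mu(A-TA)$ from the simple poles in Theorem~\ref{rformula}, the disposal of the non-basin preimages via boundedness of $A$ and the no-critical-relation hypothesis, and the speed-of-critical-values formula --- is correct and tracks the paper's own argument.
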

\paragraph{Concluding argument.} Here we show how Theorem~\ref{teich}
implies Theorem~\ref{main}.
We are going to compare~(\ref{teichpol}) to the following obvious
identity:
\begin{equation}\label{teichformalpol}
\rho'(0)=\sum_{j=1}^{p} \frac{\partial\rho}{\partial v_j}v_j'(0).
\end{equation}
The proof will be finished once we will show that the vector
$\{v_1'(0),...,v_{p}'(0)\}$ for $f\in \Pi_{d, \bar p}$ can be
chosen arbitrary.

For every vector $\bar v'=\{v_1',...,v_{p}'\}\in {\Ci}^{p}$ of
initial conditions, there exists an analytic family $f_t$ of
polynomials from $\Pi_{d, \bar p}$ with different critical points
$c_1(t),...,c_p(t)$ and corresponding critical values
$v_1(t),...,v_{p}(t)$, $v_j(t)=f_t(c_j(t)$, such that
$v_j'(0)=v_j'$, for $1\le j\le p$. Indeed, this is an immediate
consequence of Proposition~\ref{local}, where one can simply take
locally $\bar v(t)=\bar v+t \bar v'$, and find by the inverse
holomorphic correspondence $\bar v\mapsto \bar c$ a local family
$f_t$, such that $f_0=f$.
Recall that $f\in \Pi_{d, \bar p}$ is a hyperbolic polynomial,
which has no critical relations except for constant multiplicities
at the critical points and such that no critical point of $f$ is
attracted by $\infty$. Then so is the conjugated map $f_t$, for
every $t$ close to $0$. In particular, the basin of infinity of
$f_t$ is simply-connected on the Riemann sphere. We construct a
holomorphic motion $h_t$ of the plane as follows. First, for every
$t$, define $h_t$ in the basin of infinity of $f$ to be
$h_t=B_{f_t}^{-1}\circ B_f$, where $B_P$ denotes the Bottcher
coordinate function of a polynomial $P$ such that $B_P(z)/z\to 1$
as $z\to \infty$. Note that $h_t$ is holomorphic in the basin of
$\infty$. Then we define $h_t$ on the grand orbits of the critical
points of $f$ as in the proof of Theorem 7.4 of~\cite{mcmsul}. By
Theorem 3 of~\cite{BR}, the holomorphic motion $h_t$ extends in a
unique way to a holomorphic motion of the plane, which we again
denote by $h_t$, such that the complex dilatation of $h_t$ is
harmonic. As it is shown in~\cite{mcmsul}, this $h_t$ agrees with
the dynamics. By Theorem 3 of~\cite{BR}, the complex dilatation
$\nu(t,z)$
of $h_t$ depends holomorphically on $t$. It vanishes in the basin
of infinity of $f$ because $h_t$ is holomorphic there.

This proves the existence of $\nu(z,t)$ as above, which determines
$v_j(t)$ with prescribed values $v'_j(0)=v_j'$, $j=1,...,p$. By
this we finish the proof of the implication that
Theorem~\ref{teich} implies Theorem~\ref{main}.
\subsection{Proof of Theorem~\ref{rformula}}
\paragraph{Action of $T$ on Cauchy kernels.}
\begin{lem}\label{rT}
Let $f$ be as in Theorem~\ref{rformula}, and $a\in \Ci$ a
parameter. Assume all finite critical points $c_j$, $j=1,...,p$,
are simple, and the corresponding critical values $v_j=f(c_j)$ are
finite. Then
\begin{equation}\label{rcauchy1}
T\frac{1}{z-a}=\frac{1}{f'(a)}\frac{1}{z-f(a)}+\sum_{j=1}^{N}\frac{1}{f"(c_j)(c_j-a)}\frac{1}{z-v_j}.
\end{equation}
Moreover,
\begin{equation}\label{rcauchy2}
T\frac{1}{(z-a)^2}=\frac{1}{(z-f(a))^2}-
\frac{f"(a)}{(f'(a))^2}\frac{1}{z-f(a)}+\sum_{j=1}^{N}\frac{1}{f"(c_j)(c_j-a)^2}\frac{1}{z-v_j}.
\end{equation}
\end{lem}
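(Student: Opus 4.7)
My plan is to prove both identities by applying the residue theorem on the Riemann sphere $\hat{\Ci}$ to carefully chosen auxiliary rational $1$-forms. The guiding observation is that the factor $(f'(w))^{-2}$ appearing in $T\psi$ is not directly a residue, but the product $f'(w)(f(w)-z)$ in a denominator does produce exactly $(f'(w_{0}))^{2}$ at any non-critical preimage $w_{0}$ of $z$, because $f(w)-z=f'(w_{0})(w-w_{0})+O((w-w_{0})^{2})$ there. This suggests the two auxiliary meromorphic forms
\[
K_{1}(w)=\frac{1}{(w-a)\,f'(w)\,(f(w)-z)},\qquad K_{2}(w)=\frac{1}{(w-a)^{2}\,f'(w)\,(f(w)-z)},
\]
whose residues at the non-critical preimages of $z$ sum to $T(1/(z-a))$ and $T(1/(z-a)^{2})$ respectively.

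For (\ref{rcauchy1}) I work with $K_{1}$. Assuming $z$ is generic (not equal to any $v_{j}$ nor to $f(a)$; all three identities are meromorphic in $z$, so this is enough), the finite poles of $K_{1}$ are: the simple pole at $w=a$, with residue $1/(f'(a)(f(a)-z))$; a simple pole at every non-critical preimage $w_{0}$ of $z$, with residue $1/((w_{0}-a)(f'(w_{0}))^{2})$; and a simple pole at each critical point $c_{j}$, with residue $1/((c_{j}-a)f''(c_{j})(v_{j}-z))$ (simple because $f'$ has a simple zero at $c_{j}$ and $f(w)-z$ is non-vanishing there). Since $\infty$ is a fixed point of $f$ of degree $d\ge2$, we have $f(w)=O(w^{d})$ and $f'(w)=O(w^{d-1})$, so $K_{1}(w)=O(w^{-2d})$ at infinity; the residue at $\infty$ therefore vanishes. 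Setting the sum of all residues to zero and flipping signs between $f(a)-z$, $v_{j}-z$ and their opposites yields (\ref{rcauchy1}).

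The proof of (\ref{rcauchy2}) is identical, with $K_{2}$ in place of $K_{1}$. The poles at the preimages of $z$, at the critical points, and at $\infty$ behave exactly as before ($K_{2}=O(w^{-2d-1})$ at infinity). The only genuinely new ingredient is the pole at $w=a$, which is now of order two. Writing $K_{2}(w)=1/((w-a)^{2}g(w))$ with $g(w)=f'(w)(f(w)-z)$, so that $g(a)=f'(a)(f(a)-z)$ and $g'(a)=f''(a)(f(a)-z)+(f'(a))^{2}$, the first two terms of the Taylor expansion of $1/g$ at $a$ give
\[
\mathrm{Res}_{w=a} K_{2}=-\frac{g'(a)}{g(a)^{2}}=-\frac{1}{(f(a)-z)^{2}}-\frac{f''(a)}{(f'(a))^{2}(f(a)-z)}.
\]
Inserting this into the residue sum and passing from $(f(a)-z)$ to $(z-f(a))$ produces the first two terms on the right-hand side of (\ref{rcauchy2}).

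The only step requiring real care is the order-two residue at $w=a$ in $K_{2}$: it is a short Taylor expansion, but the bookkeeping between $f(a)-z$ and $z-f(a)$ and between the two summands of $g'(a)$ is where a sign error could slip in, and it is precisely this computation that generates the extra $f''(a)/(f'(a))^{2}$ term. The two hypotheses in the lemma are used exactly twice: simplicity of the critical points makes every $c_{j}$-pole of $K_{i}$ simple with the explicit residue above, and the assumption that $\infty$ is fixed by $f$ gives enough decay at $w=\infty$ to kill the residue there.
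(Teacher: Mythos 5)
Your derivation of (\ref{rcauchy1}) is exactly the paper's: apply the residue theorem to $\frac{1}{2\pi i}\int_{|w|=R}\frac{dw}{(w-a)f'(w)(f(w)-z)}$ and collect the residues at $a$, at the non-critical preimages of $z$, and at the simple critical points. For (\ref{rcauchy2}) you diverge: you re-run the residue argument with the kernel $K_2(w)=\frac{1}{(w-a)^2 f'(w)(f(w)-z)}$ and compute the order-two residue at $w=a$ by hand, whereas the paper simply differentiates (\ref{rcauchy1}) with respect to the parameter $a$. Differentiation under $T$ is immediate (the operator is linear and acts in $z$, not $a$), the derivative of $\frac{1}{f'(a)(z-f(a))}$ produces precisely the $\frac{1}{(z-f(a))^2}-\frac{f''(a)}{(f'(a))^2(z-f(a))}$ pair, and the derivative of each $\frac{1}{(c_j-a)}$ gives the squared denominator; this avoids the second residue calculation entirely. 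Both routes are correct; yours is more uniform, the paper's is shorter.

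One imprecision worth flagging: you write that $f(w)=O(w^d)$ and $f'(w)=O(w^{d-1})$ give $K_1(w)=O(w^{-2d})$, but an upper bound on the denominator does not give decay — what is needed is a lower bound on $|f(w)-z|$ and $|f'(w)|$. Since $\infty$ is a fixed point, $f(w)\sim cw^k$ with $c\neq 0$ and local degree $k$ satisfying $1\le k\le d$; then $f'(w)\sim ck\,w^{k-1}$ and $K_1(w)=O(w^{-2k})=O(w^{-2})$. This weaker bound is what actually holds when $\infty$ is not a critical point (e.g.\ the rational case $f(z)=\sigma z+m+O(1/z)$, where $k=1$), and it already suffices for the residue at $\infty$ to vanish — so the conclusion stands even though the stated $O(w^{-2d})$ bound does not.
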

\begin{proof}
Consider the integral
$$I=\frac{1}{2\pi i}\int_{|w|=R}\frac{dw}{f'(w)(f(w)-z)(w-a)}$$
and apply the Residue Theorem.
It gives~(\ref{rcauchy1}). Taking the derivative of
~(\ref{rcauchy1}) with respect to the parameter $a$, we get
~(\ref{rcauchy2}).
\end{proof}
\paragraph{Proof in the case of simple critical points.}
Recall that
$$A(z)=\sum_{k=1}^n \frac{1}{(z-b_k)^2}+\sum_{k=1}^n \frac{\gamma_k}{z-b_k},$$
where $O=\{b_1,...,b_n\}$ is a periodic orbit of $f$ of exact
period $n$ and with the multiplier $\rho\not=1,0$, and
$$\gamma_k=\frac{(f^n)"(b_k)}{\rho(1-\rho)}.$$
Assuming all critical points are simple, we can apply
Lemma~\ref{rT}, and see that
\begin{equation}\label{TA}
(TA)(z)=\sum_{k=1}^n\frac{1}{(z-f(b_k))^2}+\sum_{j=1}^n
\frac{\frac{\gamma_k}{f'(b_k)}-\frac{f"(b_k)}{(f'(b_k))^2}}{z-f(b_k)}
+\sum_{j=1}^{d-1}\frac{A(c_j)}{f"(c_j)}\frac{1}{z-v_j}.
\end{equation}
Therefore,
$$A(z)-(TA)(z)=\sum_{k=1}^n\frac{\gamma_{k+1}-\frac{\gamma_k}{f'(b_k)}+\frac{f"(b_k)}{(f'(b_k))^2}}
{z-b_{k+1}}-\sum_{j=1}^{d-1}\frac{A(c_j)}{f"(c_j)}\frac{1}{z-v_j},$$
where we assume that $\gamma_{n+k}=\gamma_k$, $b_{n+k}=b_k$. 
The proof of Theorem~\ref{rformula} in this case will be concluded
once we check the following:
$$\gamma_{k+1}-\frac{\gamma_k}{f'(b_k)}+\frac{f"(b_k)}{(f'(b_k))^2}=0.$$
One can assume $k=1$. We have:
$$\frac{\gamma_1}{f'(b_1)}-\frac{f"(b_1)}{(f'(b_1))^2}=
\frac{(f^n)"(b_1) f'(b_1)-f"(b_1)\rho(1-\rho)}{(f'(b_1))^2 \rho(1-\rho)}.$$
Now,
$$(f^n)"(b_1) f'(b_1)-f"(b_1)\rho(1-\rho)=
f'(b_1)(f"(b_1)\Pi_{j=2}^n f'(b_j)+$$
$$\sum_{k=2}^n f"(b_k)
((f^{k-1})'(b_1))^2\Pi_{j=k+1}^n f'(b_j))-f"(b_1)(\Pi_{j=1}^n
f'(b_j)-\Pi_{j=1}^n (f'(b_j))^2)=$$
$$f'(b_1)\sum_{k=2}^n f"(b_k)
((f^{k-1})'(b_1))^2\Pi_{j=k+1}^n f'(b_j)-f"(b_1)(\Pi_{j=1}^n
f'(b_j)-\Pi_{j=1}^n f'(b_j)-\Pi_{j=1}^n (f'(b_j))^2)=$$
$$f'(b_1)\sum_{k=2}^n f"(b_k) ((f^{k-1})'(b_1))^2\Pi_{j=k+1}^n
f'(b_j)+f"(b_1)\Pi_{j=1}^n (f'(b_j))^2=(f^n)"(b_2) (f'(b_1))^2.$$
\paragraph{Multiple critical points.}
Let a rational function $f$ be such that $f(z)=\sigma z^{m_\infty}+...$ at
$\infty$. Suppose $f$ has a critical point $c$ with multiplicity
$m>1$. This means that $f'(z)=(z-c)^m Q(z)$, where $Q$ is a rational
function, such that $Q(c)\not=0$. 
Let us approximate $f$ by a sequence of rational functions $f_n$,
such that $f_n(z)=\sigma_n z^{m_\infty}+...$ at $\infty$, and so that all
critical points of every $f_n$ are simple. In particular, there
are $m$ critical points $c_1(n),...,c_m(n)$ of $f_n$, such that
$c_j(n)\to c$ as $n\to \infty$, for each $j=1,...,m$. For every
$n$, $f_n$ has a periodic orbit $O_n$, so that $O_n\to O$ as $n\to
\infty$. Denote $A_n$ the function corresponding to $O_n$ and
$f_n$. Then
$$A_n(z)-T_{f_n}A_n(z)\to A(z)-TA(z)$$
as $n\to \infty$. Thus according to the proven case of simple
critical points the proof of the identity will be done if we show
that the following limit exists:
\begin{equation}\label{resmult}
\lim_{n\to \infty} \sum_{j=1}^m
\frac{A_n(c_j(n))}{f_n"(c_j(n))}\frac{1}{f_n(c_j(n))-z}=\frac{-L}{f(c)-z},
\end{equation}
for some $L$ and all $z$ with large modulus. We use again the
Residue theorem. Fix a small circle $C$ around $c$. Then, for
every big $n$ and $|z|$ large enough,
$$\lim_{n\to \infty}\frac{A_n(c_j(n))}{f_n"(c_j(n))}\frac{1}{f_n(c_j(n))-z}=
\lim_{n\to \infty}\frac{1}{2\pi i}\int_C
\frac{A_n(w)}{f_n'(w)(f_n(w)-z)}dw= \frac{1}{2\pi i}\int_C
\frac{A(w)}{f'(w)(f(w)-z)}dw.$$ An easy calculation shows that
$$\frac{A(w)}{f'(w)(f(w)-z)}=\frac{A(w)}{(w-c)^m
Q(w)((f(c)-z)+O((w-c)^{m+1}))}=$$
$$\frac{A(w)}{Q(w)(f(c)-z)}\frac{1}{(w-c)^m}+O(w-c)=
\sum_{k=0}^\infty \frac{B_k}{f(c)-z} (w-c)^{k-m}+O(w-c),$$ where
$B_k$ are defined by the Taylor expansion
$A(w)/Q(w)=\sum_{k=0}^\infty B_k (w-c)^k$. It gives
us~(\ref{resmult}) with
$$-L=B_{m-1}=\frac{1}{(m-1)!}\frac{d^{m-1}}{dw^{m-1}}|_{w=c}
(\frac{A(w)}{Q(w)}).$$
\subsection{Proof of Theorem~\ref{teich}}\label{th4}
\paragraph{Beltrami coefficients.}
As it has been mentioned already, we derive this theorem by making
use of quasiconformal deformations. The following fundamental 
facts about quasiconformal maps are well-known
(see e.g. \cite{Ah}) and will be used throughout the paper.
The Measurable Riemann Theorem states that, 
given a Beltrami coefficient $\nu$,
there exists a unique quasiconformal
homeomorphism $\psi^\nu$ of the plane with the complex dilatation $\nu$,
i.e., $\nu(z)=\frac{\partial \psi^\nu}{\partial \bar{z}}/
\frac{\partial \psi^\nu}{\partial z}$ a.e., 
and such that $\psi^\nu$ fixes $0, 1$, and $\infty$. Assume that
$\nu_t$ depends on a complex parameter $t$ and
\begin{equation}\label{invfam}
\nu_t(z)=t\mu(z)+t\epsilon(z,t), 
\end{equation}
where $\mu$, $\epsilon$ belong to $L_\infty$,
and $||\epsilon(z,t)||_\infty\to 0$ as $t\to 0$. Then there exists
\begin{equation}\label{ahl}
\frac{\partial \psi^{\nu_t}}{\partial
t}|_{t=0}(z)=-\frac{1}{\pi}\int_{\Ci}\mu(w)\frac{z(z-1)}{w(w-1)(w-z)}d\sigma_w.
\end{equation}
Let $f\in \Pi_{d, \bar p}$, and satisfy the conditions
(1)-(2) of Sect.~\ref{red}. Remember that $\nu(z, t)$ is an
analytic family of invariant Beltrami coefficients on $\C$,
such that $\nu(z,0)=0$ and $\nu(z,t)=0$ for $z$ in the basin of infinity of
$f$. Since $\nu(z,t)$ is differentiable at $t=0$,
$\nu(z,t)=t\mu(z)+t\epsilon(z,t)$,
where $||\epsilon(z,t)||_\infty\to 0$ as $t\to 0$.
Note that $\mu$ is invariant by $f$, too. Indeed, as $\nu(z,t)$
is $f$-invariant, for every $t$,
$$t\mu(f(z)) \frac{|f'(z)|^2}{f'(z)^2}+t\epsilon(t,f(z))
\frac{|f'(z)|^2}{f'(z)^2}=t\mu(z)+t\epsilon(z,t),$$ which,
together with $\epsilon(z,t)\to 0$ for $t\to 0$, implies that
$(|f'|/f')^2 \mu\circ f=\mu$.
It follows similarly that $\mu$ vanishes at the basin of
$\infty$, too (for $|t|$ small enough).

Let $h_t$ be an analytic family of quasiconformal
homeomorphism in the plane tangent to $\infty$, so that $h_t$ has
the complex dilatation $\nu(z,t)$, and $h_0=id$. Then
$f_t=h_t\circ f\circ h_t^{-1}$ is a family of
polynomials from $\Pi_{d, \bar p}$, which is
analytic in $t$. Denote by $O_t=h_t(O)$ the
corresponding attracting periodic orbit of $f_t$, by $\rho(t)$ its
multiplier, and by $v_j(t)=h_t(v_j)$ the critical values of
$f_t$.
\paragraph{Speed of the multiplier.}
We need a formula for the speed of the multiplier of a periodic
orbit in an analytic family of maps obtained by a quasiconformal
deformation. A similar formula is proved in~\cite{lbrin}. 
For completeness, we reproduce the proof here. It is based
on the formula~(\ref{ahl}). 
Let $b\in O$. 
A fundamental region $C$ near $b$ is a (measurable) set, such that
every orbit of the dynamics $z\mapsto f^n(z)$ near $b$ enters $C$
once. 
Note that then $f^{kn}(C)$, $k=1,2,...$, are again
fundamental regions (tending to $b$). 
Usually, $C$ will be a
domain bounded by a small simple curve that surrounds $b$ and its
image by $f^{n}$. 
\begin{lem}\label{linspeed} 
\begin{equation}\label{linspeedf}
\frac{\rho'(0)}{\rho}=-\frac{1}{\pi}\lim_{C\to \{b\}}\int_{C}
\frac{\mu(z)}{(z-b)^2}d\sigma_z,
\end{equation}
where $C$ is a fundamental region near $b$.
\end{lem}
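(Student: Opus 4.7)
My plan is to exploit the K\"onig linearization of $F=f^n$ at the attracting fixed point $b$ (so $0<|\rho|<1$ under the reductions of Sect.~\ref{red}). Let $\phi$ be the K\"onig linearizer with $\phi(b)=0$, $\phi'(b)=1$, $\phi\circ F=\rho\phi$; similarly let $\Phi_t$ linearize $F_t=f_t^n$ at $b_t=h_t(b)$, so $\Phi_t\circ F_t=\rho(t)\,\Phi_t$. Then $H_t:=\Phi_t\circ h_t\circ\phi^{-1}$ is a family of QC homeomorphisms, analytic in $t$, fixing $0$, and conjugating multiplication by $\rho$ to multiplication by $\rho(t)$:
$$H_t(\rho\zeta)=\rho(t)\,H_t(\zeta).$$
Differentiating at $t=0$ yields a H\"older continuous function $V(\zeta):=\partial_t H_t(\zeta)|_{t=0}$ satisfying the functional equation $V(\rho\zeta)=\rho V(\zeta)+\rho'(0)\,\zeta$ and $\partial V/\partial\bar\zeta=\tilde\mu$, where $\tilde\mu(\zeta)=\mu(\phi^{-1}(\zeta))\,\overline{(\phi^{-1})'(\zeta)}/(\phi^{-1})'(\zeta)$ is the push-forward of $\mu$ by $\phi$ as a Beltrami coefficient.

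The core step is to evaluate the area integral $I:=\int_A\tilde\mu(\zeta)\,\zeta^{-2}\,d\sigma_\zeta$ over a fundamental annulus $A=\{r|\rho|<|\zeta|<r\}$ for $\zeta\mapsto\rho\zeta$ by Stokes' theorem applied to the $1$-form $V(\zeta)\,\zeta^{-2}\,d\zeta$, whose $d$-differential equals $2i\,(V_{\bar\zeta}/\zeta^2)\,d\sigma$. The area integral becomes a contour integral over $\partial A$; parametrizing the inner boundary by $\zeta=\rho\eta$ with $|\eta|=r$ and substituting $V(\rho\eta)=\rho V(\eta)+\rho'(0)\eta$, the inner contribution collapses to the outer one plus the cocycle term $2\pi i\,\rho'(0)/\rho$. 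The outcome is
$$I\;=\;-\pi\,\frac{\rho'(0)}{\rho}.$$

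Finally I pull back to the $z$-plane via $\zeta=\phi(z)$: using holomorphy of $\phi$, a routine transformation of the Beltrami coefficient shows
$$\int_A\frac{\tilde\mu(\zeta)}{\zeta^2}\,d\sigma_\zeta\;=\;\int_C\mu(z)\left(\frac{\phi'(z)}{\phi(z)}\right)^2\,d\sigma_z,$$
where $C=\phi^{-1}(A)$ is a fundamental region for $F$ near $b$. From $\phi'/\phi=1/(z-b)+O(1)$ we get $(\phi'/\phi)^2-1/(z-b)^2=O(1/(z-b))$; integrated over an annular fundamental region of outer radius $r\to 0$ this error is $O(r)\to 0$. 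Hence $\int_C\mu(z)/(z-b)^2\,d\sigma_z\to -\pi\rho'(0)/\rho$ as $C\to\{b\}$, which is the claimed formula. The main obstacle will be the Stokes step: one must verify enough regularity of $V$ for the boundary integrals to be meaningful (continuity of $V$ is a standard consequence of the Ahlfors--Bers formula~(\ref{ahl})) and correctly bookkeep the functional-equation contribution between the two boundary circles. A secondary remark is that $\mu(z)(\phi'(z)/\phi(z))^2\,d\sigma_z$ is genuinely $F$-invariant, so its integral is independent of the particular fundamental region, which legitimizes taking the limit along annular representatives.
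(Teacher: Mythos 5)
Your argument is correct, and it reaches the same integral $\int_{\hat C}\hat\mu/w^2\,d\sigma_w$ (in your notation $\int_A\tilde\mu/\zeta^2\,d\sigma_\zeta$) as the paper, but by a genuinely different route. The paper pulls $\mu$ back through the K\"onig coordinate, \emph{extends} the resulting Beltrami form to all of $\C$ by $\rho$-invariance, applies the globally normalized quasiconformal solution $\phi_t$, and evaluates $\partial_t\phi_t|_{t=0}$ via the Ahlfors kernel~(\ref{ahl}); the cocycle equation $\rho(t)\phi_t(w)=\phi_t(\rho w)$ then produces, after a telescoping sum over the $\rho$-orbit, exactly the fundamental-annulus integral. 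You instead keep everything local: you linearize on both sides, form $H_t=\Phi_t\circ h_t\circ\phi^{-1}$, differentiate the same cocycle relation to get $V(\rho\zeta)=\rho V(\zeta)+\rho'(0)\zeta$ with $\partial_{\bar\zeta}V=\tilde\mu$, and then apply Stokes to $V\zeta^{-2}d\zeta$ on one fundamental annulus, the cocycle term emerging as a residue $2\pi i\,\rho'(0)/\rho$ from the mismatch between the two boundary circles. What the paper's route buys is that it never needs to justify Stokes for a Sobolev function on a domain and it never needs continuity of $V$ up to the boundary circles -- one simply integrates the explicit Ahlfors kernel. What your route buys is that one avoids extending $\tilde\mu$ to the whole plane and the infinite telescoping sum; the mechanism (a residue coming from the functional equation of an Ahlfors--Bers vector field) is conceptually transparent. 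The regularity you invoke is indeed available: $V\in W^{1,p}_{\mathrm{loc}}$ for all $p<\infty$ and is H\"older continuous, so Stokes applies on an annulus strictly inside the domain of linearization, and the pullback identity $\int_A\tilde\mu\,\zeta^{-2}d\sigma_\zeta=\int_C\mu\,(\phi'/\phi)^2 d\sigma_z$ together with $(\phi'/\phi)^2=1/(z-b)^2+O(1/(z-b))$ and the $O(r)$ bound on the error integral closes the argument. One small point worth stating explicitly if you write this out in full: the functional equation $H_t(\rho\zeta)=\rho(t)H_t(\zeta)$ requires $\Phi_t\circ F_t=\rho(t)\Phi_t$ together with $h_t\circ F=F_t\circ h_t$, and one should choose the annulus $A$ and its $\rho$-forward image inside the common domain where $\phi^{-1}$, $h_t$, and $\Phi_t$ are all defined.
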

\begin{proof}
Let us linearize $f^n$ near $b$
by fixing a disk $D=\{|z|<r_0\}$ and a univalent map  $K: D\to \C$,
such that $K(0)=b$ and $f^n\circ K=K\circ \rho$ in $D$.
Given a function $\tau$ defined near the point $b$ denote by 
$\hat \tau=|K'|^2/(K')^2\tau\circ K$ the pullback
of $\tau$ to $D$. For every $t$, 
the pullback $\hat \nu(w,t)$ of the Beltrami
coefficient $\nu(w,t)$ is invariant by the linear map 
${\bf \rho}: w\mapsto \rho w$, i.e., 
$\hat \nu(w,t)=|\rho|^2/\rho^2\hat \nu(\rho w,t)$. 
For every fixed $t$, extend $\hat \nu(w,t)$ to $\C$ by the latter equation.
Denote by
$\phi_t$ the quasiconformal map of the plane with the
complex dilatation $\hat\nu(w,t)$,
which fixes $0, 1$, and $\infty$. Then the map
$\phi_t\circ \rho\circ \phi_t^{-1}$ is again linear
$w\mapsto \lambda(t) w$, for some $|\lambda(t)|<1$.
It is easy to see from the construction that
$w\mapsto \lambda(t) w$ is analytically conjugate to $f_t^n$
near $h_t(b)$. Therefore, $\lambda(t)=\rho(t)$.
Note that
$\hat \nu(w,t)=t\hat \mu(w)+t\hat \epsilon(w,t)$, where
$||\hat \epsilon(w,t)||_\infty\to 0$ as $t\to 0$.
In particular, $\hat \mu$ is invariant by the linear map ${\bf \rho}$, too.
By the change of coordinates $z=K(w)$, the formula~(\ref{linspeedf}) 
reads now:
\begin{equation}\label{linspeedff}
\frac{\rho'(0)}{\rho}=
-\frac{1}{\pi}\int_{\hat C} \frac{\hat\mu(w)}{w^2}d\sigma_w,
\end{equation}
where $\hat C$ is a fundamental region of $w\mapsto \rho w$.
We prove the latter formula. 
From the invariance of $\hat\mu$, one can assume that
$\hat C=\{w: |\rho|<|w|<1\}$.
Differentiating the equation
$\rho(t)\phi_t(w)=\phi_t(\rho w)$
by $t$ at $t=0$, we get, for $w\not=0$:
$$\rho'(0)=\frac{1}{w}(\frac{\partial}{\partial t}|_{t=0}\phi_t(\rho w)-
\rho \frac{\partial}{\partial t}|_{t=0}\phi_t(w)),$$
where, by~(\ref{ahl}),
$$\frac{\partial}{\partial t}|_{t=0}\phi_t(w)=-\frac{1}{\pi}\int_{\C}
\hat\mu(u)\frac{w(w-1)}{u(u-1)(u-w)}d\sigma_u.$$
After elementary transformations and using the invariance
of $\hat\mu$,
we get
$$\rho'(0)=-\frac{\rho(\rho-1)}{\pi}w\sum_{n\in {\bf Z}}
\int_{\hat C}\frac{\hat\mu(\rho^n z)|\rho|^{2n}}{\rho^{n}z
(\rho^n z-\rho w)(\rho^n z-w)}d\sigma_z=$$
$$=-\frac{\rho}{\pi}\int_{\hat C}\frac{\hat\mu(z)}{z}\lim_{N\to +\infty}
\sum_{n=-N}^{N}(\frac{\rho^{n-1}}{\rho^{n-1}z-w}-
\frac{\rho^{n}}{\rho^{n}z-w})
d\sigma_z=$$
$$=-\frac{\rho}{\pi}\int_{\hat C}\frac{\hat\mu(z)}{z}\lim_{N\to +\infty}
(\frac{\rho^{-N-1}}{\rho^{-N-1}z-w}-\frac{\rho^{N}}{\rho^{N}z-w})d\sigma_z=
-\frac{\rho}{\pi}\int_{\hat C}\frac{\hat\mu(z)}{z^2}d\sigma_z,$$
because $|\rho|<1$.
\end{proof}
\paragraph{Adjoint identity.}
We want to integrate the identity
\begin{equation}\label{again}
A(z)-(TA)(z)= \sum_{j=1}^{p}\frac{L_j}{z-v_j}.
\end{equation}
against the $f$-invariant Beltrami form $\mu$. Recall that $\mu$
vanishes in a neighborhood of $\infty$, so that $\mu A=0$ there.
On the other hand, $A$ is not integrable at the points of the
periodic orbit $O$. To deal with this situation, for every small
$r>0$, consider the domain $V_{r}$ to be the plane $\Ci$ with the
following sets deleted: $B(b_1,r)$ union with
$f_{b_{n-k+1}}^{-k}(B(b_1,r))$, for $k=1,...,n-1$, where
$f_{b_{n-k+1}}^{-k}$ is a local branch of $f^{-k}$ taking $b_1\in
O$ to $b_{n-k+1}$. In other words,
$$V_{r}={\Ci}\setminus \{B(b_1,r)\cup_{k=1}^{n-1}
f_{b_{n-k+1}}^{-k}(B(b_1,r))\}.$$ Then $A$ is integrable in
$V_{r}$, and, therefore, by the invariance of $\mu$,
$$\int_{V_{r}}TA(z)\mu(z)d\sigma_z=\int_{f^{-1}(V_{r})}
A(z)\mu(z).$$ Now, $f^{-1}(V_{r})=V_{r}\setminus(C_r\cup
\Delta_r)$, where $C_r=f_{b_1}^{-n}(B(b_1,r))\setminus B(b_1,r)$
is a fundamental region near $b_1$ defined by the local branch
$f_{b_1}^{-n}$ that fixes $b_1$, and, in turn, $\Delta_r$ is an
open set which is away from $O$ and shrinks to a finitely
many points as $r\to 0$. Therefore,
\begin{equation}\label{inta}
\int_{V_{r}}(A(z)-TA(z))\mu(z)d\sigma_z=
\int_{C_r}A(z)\mu(z)d\sigma_z+o_r(1)
\end{equation}
where $o(1)\to 0$ as $r\to 0$. It is easy to see that
$$\int_{C_r}A(z)\mu(z)d\sigma_z=\int_{C_r}\frac{\mu(z)}{(z-b_1)^2}d\sigma_z+o(1).$$
Thus,
\begin{equation}\label{intaex}
\int_{V_{r}}(A(z)-TA(z))\mu(z)d\sigma_z=
\int_{C_r}\frac{\mu(z)}{(z-b_1)^2}d\sigma_z+ o(1).
\end{equation}
The identity~(\ref{again}) then gives us:
\begin{equation}\label{mua}
\int_{C_r}\frac{\mu(z)}{(z-b_1)^2}d\sigma_z+ o(1)=
\sum_{j=1}^{p}L_j\int_{V_{r}}\frac{\mu(z)}{z-v_j}d\sigma_z
\end{equation}
Lemma~\ref{linspeed} allows us to pass to the limit as $r\to 0$:
\begin{equation}\label{adjidentitypol}
\frac{\rho'(0)}{\rho}=
-\frac{1}{\pi}\sum_{j=1}^{p}L_j\int_{\Ci}\frac{\mu(z)}{z-v_j}d\sigma_z.
\end{equation}
\paragraph{Speed of critical values.}
Now we want to express the integral of $\mu(z)/(z-v_j)$ via
$v_j'(0)$. It follows from $v_j(t)=h_t(v_j)$, that
\begin{equation}\label{h}
v_j'(0)=\frac{\partial h_t}{\partial t}|_{t=0}(v_j).
\end{equation}
Let $\psi_t$ be the quasiconformal homeomorphism of
the plane with the complex dilatation $\nu(z,t)$, that fixes $0, 1$
and $\infty$. Since $h_t$ has the same complex dilatation and
fixes $\infty$ too, we have: $h_t=a(t)\psi_t+b(t)$, where $a$,
$b$ are analytic in $t$ (because $h_t$ is so), and $a(0)=1$,
$b(0)=0$, because $h_0=id$. Then
\begin{equation}\label{czspeed}
\frac{\partial h_t}{\partial t}|_{t=0}(z)=a'(0)z+b'(0)+\kappa(z)
\end{equation}
where
\begin{equation}\label{h'}
\kappa(z)=\frac{\partial \psi_t}{\partial t}|_{t=0}(z)=-\frac{1}{\pi}\int_{\Ci}\mu(w)\frac{z(z-1)}{w(w-1)(w-z)}d\sigma_w.
\end{equation}
In other words,
\begin{equation}\label{z'}
\frac{\partial h_t}{\partial t}|_{t=0}(z)=z(a'(0)+
\frac{1}{\pi}\int_{\Ci}\frac{\mu(w)}{w(w-1)}d\sigma_w)
+b'(0)+\frac{1}{\pi}\int_{\Ci}\frac{\mu(w)}{w}d\sigma_w
-\frac{1}{\pi}\int_{\Ci}\frac{\mu(w)}{w-z}d\sigma_w,
\end{equation}
where the integrals exist because
$\mu$ vanishes near $\infty$.

On the other hand, since $f$ stays in the space of centered monic polynomials, we
come to the following normalization at $\infty$:
$h_t(z)=z+O(1/z)$. As $h_t(z)$ is holomorphic in $t$ and $z$ for $|t|$ small
and $|z|$ big, we conclude from this 
that $(\partial h_t/\partial t)|_{t=0}(z)=O(1/z)$.
Coming back to~(\ref{z'}) we see that it is possible if and only if
\begin{equation}\label{z'short}
\frac{\partial h_t}{\partial t}|_{t=0}(z)=
-\frac{1}{\pi}\int_{\Ci}\frac{\mu(w)}{w-z}d\sigma_w.
\end{equation}
In particular,
$$v_j'(0)=-\frac{1}{\pi}\int_{\Ci}\frac{\mu(w)}{w-v_j}d\sigma_w.$$
Plugging this in~(\ref{adjidentitypol}), we get finally:
\begin{equation}\label{polfinal}
\frac{\rho'(0)}{\rho}=\sum_{j=1}^{p}L_j v_j'(0).
\end{equation}
This proves Theorem~\ref{teich}. According to the Concluding
argument of Sect~\ref{red}, Theorem~\ref{teich} yields
Theorem~\ref{main}.

\section{Proof of Theorem~\ref{attr}}\label{attrs}
Assume the contrary: the rank of the matrix ${\bf O}$ is less than
$r$. Then its rows are linearly dependent. Let us write down the
connection~(\ref{ruelleV}) for every periodic orbit $O_j$.
We introduce some notations: $O_j=\{b^j_k\}_{k=1}^{n_j}$ the set of
points of the periodic orbit $O_j$ of period $n_j$,
and the function $\tilde B_j$ is said to be $B_{O_j}$ iff $\rho_j\not=1$
and $\hat B_{O_j}$ iff $\rho_j=1$. Remember that
\begin{equation}\label{bj}
B_{O_j}(z)=\sum_{k=1}^{n_j}
\{\frac{\rho_j}{(z-b^j_k)^2}+\frac{1}{1-\rho_j}\frac{(f^{n_j})"(b^j_k)}{z-b^j_k}\},
\end{equation}
and
\begin{equation}\label{bjn}
\hat B_{O_j}(z)=\sum_{k=1}^{n_j}
\frac{(f^{n_j})"(b^j_k)}{z-b^j_k}.
\end{equation}
First of all, we
observe that each $B_j$ is not identically zero. Indeed, this is
obvious if $\rho_j\not=0$ and $\rho_j\not=1$. But if $\rho_j=0$, then, by the
assumption of the theorem, there is precisely one critical point
$c$ among the points of $O_j$, and $f"(c)\not=0$. One can assume
$b^j_1=c$. Then
$(f^{n_j})"(b^j_1)=f"(c)\Pi_{k=2}^{n_j}f'(b^j_k)\not=0$. This
guarantees that $B_j$ is not zero in this case, too.
If $\rho_j=1$, then, by the assumption, $(f^{n_j})"(b^j_k)\not=0$,
and hence $\hat B_j$ is not zero in this case as well.
In this notation, the connections~(\ref{ruelleV}), (\ref{ruelleVcusp}) 
read as follows:
$\tilde B_j(z)-(T\tilde B_j)(z)=\sum_{i=1}^q 
\frac{\tilde{\partial}^V \rho_j}{\partial V_i}\frac{1}{z-V_i},$
for every $j=1,...,r$.
Now, the assumption implies that there exists a linear
combination $L$ of $\tilde B_j$, $j=1,...,r$, such that
\begin{equation}\label{l}
L(z)-(TL)(z)=0.
\end{equation}
Since no function $\tilde B_j$ is zero and the periodic orbits $O_j$ are
different, $L$ is a non-zero rational function with (possible)
double poles at the points of the periodic orbits $O_j$, $1\le
j\le r$.
Let $j'$ denote indexes corresponding to neutral periodic
orbits $O_j$ (if any). Given $r>0$ small enough, we define a domain
$V_{r,R}$ as the plane with the following sets taken away:
(i) the neighborhood $B^*(R)$ of $\infty$,
(ii) the basin of attraction of $O_j$ provided that $O_j$ is
attractive,
(iii) if $O_j$ is neutral, then the set (to be deleted) is the
disk $B(b_1^j,r)$ union with
$f_{j}^{-k}(B(b^j_1,r))$, for $1\le k\le n_j-1$, where
$f_{j}^{-k}$ is a local branch of $f^{-k}$ taking
$b^j_1\in O_j$ to $b^j_{n_j-k+1}$.

Then $L$ is integrable in $V_{r,R}$. We fix $R$ large enough.
For any fixed parameter $\lambda$, such that $0<\lambda<1$,
and $r>0$ small enough,
\begin{equation}\label{diff}
f^{-1}(V_{r,R})\subset \{V_{r,R}\setminus (f^{-1}(B^*(R))\setminus
B^*(R))\} \cup \cup_{j'} \{B(b^{j'}_1, r)\setminus B(b^{j'}_1,
\lambda r)\}.
\end{equation}
Note that
$\int_{B(b, r)\setminus B(b, \lambda r)}
\frac{1}{|z-b|^2}d\sigma_z\to 2\pi\log\lambda^{-1}$
as $r\to 0$. On the other hand,~(\ref{l}) implies that
$$0=\int_{V_{r,R}}|L-TL|d\sigma_z\ge \int_{V_{r,R}}|L|d\sigma_z
-\int_{V_{r,R}}|TL|d\sigma_z\ge \int_{V_{r,R}}|L|d\sigma_z
-\int_{f^{-1}V_{r,R}}|L|d\sigma_z.$$
As $r\to 0$, we then get
$\int_{f^{-1}(B^*(R))\setminus B^*(R)}|L(z)|d\sigma_z\le
\sum_{j'}C_{j'}\log\lambda^{-1}$, with some $C_{j'}\ge 0$,
which is impossible if $R$ is fixed
and $\lambda$ is close enough to $1$, which contradicts the assumption.
\begin{com}\label{last}
This same proof shows the classical bound $r\le q$ (see
Comment~\ref{fatou}). Indeed, otherwise the rows of ${\bf O}$ are
again linearly dependent, and the proof above applies.
See also Comment~\ref{rlast} for rational functions
and some further discussions.
\end{com}
\section{Rational maps. Main results}\label{rs1}
\subsection{Spaces associated to a rational map}\label{rss1}
Similar to the polynomial case,
let us introduce a space $\Lambda_{d,\bar p'}$ of rational functions
and its subspace
$\Lambda^{q'}_{d, \bar p'}$ as follows.
\begin{defi}\label{ratpol}
Let $d\ge 2$ be an integer, and 
$\bar p'=\{m_j\}_{j=1}^{p'}$ a set of $p'$ positive
integers, such that $\sum_{j=1}^{p'} m_j= 2d-2$. 
A rational
function $f$ of degree $d$ belongs to $\Lambda_{d, \bar p'}$ if and only if 
it satisfies the following conditions:

(1) Infinity is a simple fixed point of $f$; more precisely,
\begin{equation}\label{rrr}
f(z)=\sigma z+m+\frac{P(z)}{Q(z)},
\end{equation}
where $\sigma\not=0,\infty$, and $Q$, $P$ are polynomials of
degrees $d-1$ and at most $d-2$ resp., which have no common roots.
Without loss of generality, one can assume that $Q(z)=z^{d-1}+a_1
z^{d-2}+...+a_{d-1}$ and $P(z)=b_0 z^{d-2}+...+b_{d-2}$,

(2) $f$ has precisely $p'$ geometrically different critical points
$c_1,...,c_{p'}$, and the multiplicity of $c_j$ is equal to $m_j$,
that is, the equation $f(w)=z$ has precisely $m_j+1$ different
solutions for $z$ near $c_j$ and $z\not=c_j$, $j=1,...,p'$.
Denote by $v_1,...,v_{p'}$, $v_j=f(c_j)$, corresponding critical values.
We assume that some of them can coincide as well as 
some can be $\infty$.
By $p=p_f$ we denote usually the number of critical points of $f$ with finite
images, i.e. so that the corresponding critical values are finite.
By definition, $p<p'$ if and only if infinity is a critical value. 

The space $\Lambda_{d, \bar p'}^{q'}$, for some $1\le q'\le p'$, 
consists of those
$f\in \Lambda_{d,\bar p'}$, for which $f$ has precisely $q'$
geometrically different critical values, i.e., the set
$\{v_j=f(c_j), \ j=1,...,p'\}$ contains $q'$ different points
(including possibly infinity). If $\infty$ is a critical value, then
$f$ has $q=q'-1$ different finite critical values.

Finally, we define the space $S_d$ as follows.
Consider first $\Lambda_{d, \overline{2d-2}}$, in other words,
the space of maps with simple critical points. Now, $S_d$ is said to be
its subspace consisting of maps $f$, 
such that every critical value of $f$ is finite.
\end{defi}
By a Mobius change of coordinate, every rational function $f$ 
of degree $d\ge 2$ belongs 
to some $\Lambda_{d, \bar p'}$. Indeed, 
$f$ has either a repelling fixed point,
or a fixed point with the multiplier $1$ (see
e.g.~\cite{mil}). Hence, there exists a Mobius
transformation $M$, such that $\infty$ is a fixed 
non-attracting point of $\tilde f=M\circ f\circ M^{-1}$.

Let us identify $f\in \Delta_{d, \bar p'}$ as above with the point
$$\bar f=\{\sigma, m, a_1,...,a_{d-1}, b_0,...,b_{d-2}\}$$
of ${\Ci}^{2d}$. It defines an analytic (in fact, algebraic)
variety in ${\Ci}^{2d}$. We denote it again by $\Lambda_{d, \bar p'}$.
We will see that $\Lambda_{d, \bar p'}$ has a natural structure of a manifold
of complex dimension $p'+2$, see Sect.~\ref{rlocalcoord}.

The set $\Lambda_{d, \bar p'}$ is connected. 
Apparently, this follows 
from~\cite{conn} although we will {\it not} use this non-trivial statement in the paper.
On the other hand, we will need a much easier fact: the space $S_d$ is
path-connected. This will be used in the proof
precisely like the path-connectedness of the space $\Pi_{d, \bar p}$ is
used in the polynomial case.
To show the path-connectedness of $S_d$, we proceed as follows.
For any two rational functions $f_i(z)=P_i(z)/Q_i(z)$, $i=1,2$,
let us define 
$[f_1, f_2](\gamma):=
((1-\gamma)P_1+tP_2)/((1-\gamma)Q_1+\gamma Q_2)$, for $\gamma\in \Ci$. It
is easy to see that, except for finitely many $\gamma$'s,
$[f_1,f_2](\gamma)\in S_d$ provided $f_1,f_2\in S_d$. Now,
choosing a path $\gamma: [0,1]\to \Ci$ avoiding exceptional
$\gamma$'s, we get a path in $S_d$ that joins their arbitrary two
points $f_1$, $f_2$.
\subsection{Local coordinates}\label{rlocalcoord}
We introduce what is going to be a local coordinate $\bar v(f)$ of
$f$ in $\Lambda_{d, \bar p'}$. 
For a rational function 
$f\in \Lambda_{d, \bar p'}$,
by $c_j(f)$ and $v_j(f)=f(c_j(f))$ we denote its critical points 
and critical values resp., and by $\sigma(f)$, $m(f)$ the 
corresponding data at $\infty$, so that $f(z)=\sigma(f) z + m(f) + O(1/z)$. 
Now, fix $f_0\in \Lambda_{d, \bar p'}$,
and consider maps $f$ in a small enough 
neighborhood of $\bar f_0$ in $\Lambda_{d, \bar p'}$.
Introduce a vector $\bar v(f)\in {\bf C}^{p'+2}$ as follows.
Let us fix an order $c_1(f_0),...,c_{p'}(f_0)$ in the collection 
of all critical points of $f_0$. Moreover, we will do it in such a way,
that:
(a) first $p$ indexes correspond to finite critical values,
i. e. $v_j(f_0)\not=\infty$ for $1\le j\le p$ and $v_j(f_0)=\infty$
for $p<j\le p'$ (if $p<p'$), (b) if $v_i(f_0)=v_j(f_0)$, then
$v_i(f_0)=v_k(f_0)$, for $i\le k\le j$.
There exist $p'$ 
functions $c_1(f),...,c_{p'}(f)$, which are defined and continuous
in a small neighborhood of $f_0$ in $\Lambda_{d, \bar p'}$,
such that they constitute all different critical points of $f$
of the multiplicities $m_j$. 
Define now the vector $\bar v(f)$.
If all critical values of $f_0$ are finite, then 
we set
$$\bar v(f)=\{\sigma(f), m(f), v_1(f),...,v_{p'}(f)\},$$
with the order from above.
If some of the critical values $v_j(f_0)$ of $f_0$ are infinity,
that is, $v_j(f_0)=\infty$ for $p<j\le p'$, then we replace
in the definition of $\bar v(f)$ corresponding $v_j(f)$ by their
reciprocals $v_j(f)^{-1}$:
$$\bar v(f)=\{\sigma(f), m(f), v_1(f),...,v_{p}(f),v_{p+1}(f)^{-1},...,
v_{p'}(f)^{-1}\}.$$
In particular,
$\bar v(f_0)=\{\sigma(f_0), m(f_0), 
v_1(f_0),...,v_p(f_0),0,...,0\}$.

The function $f_0\in \Lambda_{d, \bar p'}$
belongs to a unique
subspace $\Lambda_{d, \bar p'}^{q'}$. Then, 
for $f\in \Lambda_{d, \bar p'}^{q'}$ close to $f_0$, we define
another vector $\bar V(f)\in {\bf C}^{q'+2}$ 
by retaining each critical value in $\bar v(f)$ once.

We have a local map $\delta: f\mapsto \bar v(f)$ from a neighborhood
of $f_0$ in the space $\Lambda_{d, \bar p'}$ to ${\Ci}^{p'+2}$.
\begin{prop}\label{rlocal} 
The map
$$\delta: \Lambda_{d, \bar p'}\to {\Ci}^{p'+2}$$
is locally a biholomorphic isomorphism between some
neighborhoods of $\bar f_0\in \Lambda_{d, \bar p'}$
and $\bar v(f_0)\in \C^{p'+2}$.
In particular, $\Lambda_{d, \bar p'}$ is a manifold of dimension $p'+2$.
\end{prop}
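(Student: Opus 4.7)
My plan is to mirror the proof of Proposition~\ref{local} from the polynomial setting. I would first verify that $\delta$ is holomorphic and that the ambient variety $\Lambda_{d,\bar p'}$ has the expected complex dimension $p'+2$; then establish local injectivity of $\delta$ by a monodromy/sphere-covering argument adapted to the rational case; finally conclude local biholomorphism and the manifold structure from these two ingredients.

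For holomorphicity, the identification $f\mapsto(\sigma,m,a_1,\dots,a_{d-1},b_0,\dots,b_{d-2})$ embeds $\Lambda_{d,\bar p'}$ in ${\Ci}^{2d}$. Since $f'(z)=(\sigma Q(z)^2+P'(z)Q(z)-P(z)Q'(z))/Q(z)^2$ has a numerator of degree at most $2d-2$, and the multiplicity profile of its roots is locally constant on $\Lambda_{d,\bar p'}$, the implicit function theorem yields that each $c_j(f)$ depends holomorphically on $f$ near $f_0$, hence so do $v_j(f)=f(c_j(f))$ and (when $v_j(f_0)=\infty$) the reciprocals $v_j(f)^{-1}$. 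Together with the holomorphic dependence of $\sigma(f),m(f)$, this shows $\delta$ is holomorphic. A dimension count is immediate: fixing the multiplicity profile $\bar p'$ imposes $\sum_j(m_j-1)=(2d-2)-p'$ analytic conditions on the $2d$ ambient coefficients, giving $\Lambda_{d,\bar p'}$ the expected dimension $2d-((2d-2)-p')=p'+2$, matching the target $\Ci^{p'+2}$.

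The technical core is local injectivity of $\delta$. Suppose $f_1,f_2\in\Lambda_{d,\bar p'}$ are close to $f_0$ with $\delta(f_1)=\delta(f_2)$; then $\sigma(f_1)=\sigma(f_2)=:\sigma$, $m(f_1)=m(f_2)=:m$, and the ordered critical values of $f_1$ and $f_2$ coincide. Near $\infty$ there is a unique distinguished branch $F$ of $f_2^{-1}$ fixing $\infty$, with expansion $F(w)=(w-m)/\sigma+O(1/w)$, and setting $g:=F\circ f_1$ near $\infty$ gives $g(z)=z+O(1/z)$. I would then cover $\hat{\Ci}$ by finitely many balls $B_1,\dots,B_s$, one centered at each distinct critical value of $f_0$ (possibly including $\infty$), one centered at $\infty$, plus auxiliary balls, chosen so that each component of $f_0^{-1}(B_k)$ contains at most one critical point. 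For $f_i$ near $f_0$, one centers each component of $f_i^{-1}(B_k)$ at the preimage of the center of $B_k$ corresponding to the component of $f_0^{-1}(B_k)$ it perturbs. The hypothesis $\delta(f_1)=\delta(f_2)$ forces $f_1$ and $f_2$ to share matching critical values and ramification structure above every ball, so $g$ extends by analytic continuation along chains of adjacent preimage components, with overlapping centers ensuring the branches of $F\circ f_1$ agree. The resulting single-valued holomorphic $g:\hat{\Ci}\to\hat{\Ci}$, together with the normalization $g(z)=z+O(1/z)$, forces $g=\mathrm{id}$; hence $F\circ f_1=\mathrm{id}$ and $f_1=f_2$.

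Given local injectivity, a holomorphic injection from an analytic variety of pure dimension $p'+2$ into the manifold $\Ci^{p'+2}$ is automatically a local biholomorphism onto an open set, and in particular $\Lambda_{d,\bar p'}$ is smooth at $f_0$; this yields simultaneously the claimed manifold structure and biholomorphism. I expect the monodromy step to be the main obstacle: unlike the polynomial case, $\infty$ is only a simple fixed point, so one cannot rely on an essentially unique $d$th-root branch at infinity and must instead use the distinguished inverse branch $F$; moreover, when some $v_j(f_0)=\infty$ the ball covering near $\infty$ must be chosen so that the preimage components accommodating critical points of multiplicities $m_j$ above $\infty$ are tracked consistently with those away from $\infty$. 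Ensuring well-definedness of the monodromy extension across all components and ramification types is the delicate bookkeeping that this proof hinges on.
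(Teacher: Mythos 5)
Your proposal follows the paper's first proof of Proposition~\ref{rlocal}: local injectivity of $\delta$ via the monodromy/ball-covering argument (with the distinguished inverse branch of $f_2$ fixing $\infty$ replacing the polynomial-case $d$th-root branch), followed by a general principle about injective holomorphic maps from analytic sets into $\Ci^r$, which the paper isolates as Proposition~\ref{verygeneral}. One reordering would tighten your dimension count: the $2d-2-p'$ defining equations only give dimension \emph{at least} $p'+2$ a priori, and it is the injectivity of $\delta$ into $\Ci^{p'+2}$ that forces equality (and openness of the image), so the injectivity step must come before the assertion of pure dimension. Note that the paper also supplies a second, independent proof by quasiconformal surgery following Eremenko, which you do not address.
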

In Sect.~\ref{rpr1} we give two proofs of this basic fact.

The space $\Lambda_{d, \bar p'}$ can therefore be identified in a
neighborhood of its point $f_0$ with a neighborhood
$W_{f_0}$ of $\bar v(f_0)\in {\Ci}^{p'+2}$. If,
moreover, $f_0\in \Lambda_{d, \bar p'}^{q'}$, for some $q'$, then its
neighborhood in $\Lambda_{d, \bar p'}^{q'}$ is the intersection of
$W_{f_0}$ with a $q'$-dimensional linear subspace of
${\Ci}^{p'+2}$ defined by the conditions: $v_i=v_j$ iff
$v_i(f_0)=v_j(f_0)$. Thus the vector $\bar V(f)$ 
serves as a
local coordinate system in $\Lambda_{d, \bar p'}^{q'}$.
\subsection{Connection between the dynamics and parameters:
the main formula}
Let $f$ be a rational function. 
Suppose $f\in \Lambda_{d, \bar p'}$, and, moreover, 
$f\in \Lambda_{d, \bar p'}^{q'}$. These will be the parameter spaces
associated to $f$. 
Consider any finite {\it
periodic orbit} $O=\{b_k\}_{k=1}^n$ of $f$ of exact period $n$,
with the multiplier $\rho=(f^n)'(b_k)=\Pi_{j=1}^n f'(b_j)\not=1$.
(For $\rho=1$, see Subsect.~\ref{rcusp}.)
By the Implicit Function theorem and by
Proposition~\ref{rlocal}, there is a set of $n$ functions
$O(\overline v)=\{b_k(\overline v)\}_{k=1}^n$ defined and
holomorphic in $\overline v\in \Ci^{p'+2}$ in a neighborhood of
$\bar v(f)$, 
such that $O(\overline v)=O$ for $\overline
v=\overline v(f)$, and $O(\bar v)$ is a periodic orbit of $g\in
\Lambda_{d, \bar p}$ of period $n$, where $g$ is in a neighborhood of $f$,
and $\bar v=\bar v(g)$. In particular, if
$\rho(\bar v)$ denotes the multiplier of the periodic orbit
$O(\bar g)$ of $g$, it is a holomorphic function of
$\bar v$ in this neighborhood. The standard notation
$\partial \rho/\partial v_j$, $1\le j\le p$, denotes the partial derivatives
of $\rho$ w.r.t the {\it finite} critical values of $f$.

Now, suppose that $g$ stays in a neighborhood of $f$ inside
$\Lambda_{d, \bar p'}^{q'}$. Set $q=q'$ iff all critical values are finite,
and $q=q'-1$ otherwise.
Then the multiplier $\rho$ of $O(g)$ is, in fact,
a holomorphic function of the vector of $q$ different critical
values $\{V_1,...,V_{q}\}$ of $g$ iff $q'=q$, i.e. they are all finite,
and the vector $\{V_1,...,V_{q},1/V_{q+1}\}$ iff $q'=q+1$,
i.e. $f$ has an infinite critical value.
By $\partial^V\rho/\partial V_k$
we then denote the partial derivatives of $\rho$ w.r.t. the
different {\it finite} critical values $V_1,...,V_q$. 
We have:
$\frac{\partial^V\rho}{\partial V_k}=\sum_{j: v_j=V_k}
\frac{\partial \rho}{\partial v_j}.$
\begin{theo}\label{rmain}
Suppose $f\in \Lambda_{d, \bar p'}$. Let $O$ be a periodic orbit of $f$
with multiplier $\rho\not=1$, and $B=B_O$. Then 
\begin{equation}\label{rruelle}
B(z)-(TB)(z)= \sum_{j=1}^{p}\frac{\partial \rho}{\partial
v_j}\frac{1}{z-v_j},
\end{equation}
where $v_j=v_j(f)$, $1\le l\le p$, 
are all finite critical values corresponding
to different critical points.
We have: 

for $1\le j \le p$ (i.e. for finite critical values of $f$):
\begin{equation}\label{rpartialv}
\frac{\partial \rho}{\partial v_j}=-\frac{1}{(m_j-1)!}
\frac{d^{m_j-1}}{dw^{m_j-1}}|_{w=c_j}\frac{B(w)}{Q_j(w)}=
-\frac{1}{2\pi i}\int_{|w-c_j|=r}\frac{B(w)}{f'(w)}dw,
\end{equation}

for $p<j\le p'$ (i.e. for infinite critical values of $f$):
\begin{equation}\label{rpartialinfv}
\frac{\partial \rho}{\partial (v_j^{-1})}=\frac{1}{(m_j-1)!}
\frac{d^{m_j-1}}{dw^{m_j-1}}|_{w=c_j}\frac{B(w)}{Q_j(w)}=
\frac{1}{2\pi i}\int_{|w-c_j|=r}\frac{B(w)}{(1/f)'(w)}dw,
\end{equation}
where $Q_j$ is defined by $f'(w)=(w-c_j)^{m_j}Q_j(w)$ for
$1\le j\le p$, and  $(1/f)'(w)=(w-c_j)^{m_j}Q_j(w)$ for
$p< j\le p'$.

Also,
\begin{equation}\label{rpartialrest}
\frac{\partial \rho}{\partial
\sigma}=\frac{\tilde\Gamma_2}{\sigma}, \ \ \ \ \ \frac{\partial
\rho}{\partial m}=\frac{\tilde\Gamma_1}{\sigma},
\end{equation}
where $\tilde\Gamma_1$, $\tilde\Gamma_2$ are defined by the
expansion
$$B(z)=\frac{\tilde\Gamma_1}{z}+\frac{\tilde\Gamma_2}{z^2}+O(\frac{1}{z^3})$$
at infinity:
\begin{equation}\label{rexpinf}
\tilde\Gamma_1=\frac{1}{1-\rho}\sum_{k=1}^n (f^n)"(b_k), \ \ \ \
\tilde\Gamma_2= n\rho+\frac{1}{1-\rho}\sum_{k=1}^n b_k(f^n)"(b_k).
\end{equation}

\

If $f\in \Lambda_{d, \bar p'}^{q'}$, then
\begin{equation}\label{rruelleV}
B(z)-(TB)(z)= \sum_{k=1}^{q}\frac{\partial^V \rho}{\partial
V_k}\frac{1}{z-V_k},
\end{equation}
where $V_k$, $k=1,...,q$, are all pairwise different and finite
critical values of $f$.
\end{theo}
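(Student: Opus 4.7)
The plan is to mirror the polynomial proof of Theorem~\ref{main}, with appropriate modifications to account for the fact that $\infty$ is now a non-critical simple fixed point of $f$ with multiplier $\sigma$ and translation term $m$, and with the new possibility that some critical values are infinite. First I would reduce to the case where $f$ lies in $S_d$ (simple critical points, all values finite), is hyperbolic, and $O$ is attracting with all critical points attracted to $O$. This uses the path-connectedness of $S_d$ noted in Sect.~\ref{rss1}, the Monodromy-Theorem argument for deforming a repelling orbit to an attracting one within a one-dimensional subfamily, and the fact that both sides of (\ref{rruelle}) depend analytically on $\bar v$; the Uniqueness Theorem then propagates the identity along the path. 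For the formulas (\ref{rpartialinfv}) involving infinite critical values, I would obtain them as the limit, via (\ref{rpartialv}), of the finite case using the coordinate change $v_j \mapsto 1/v_j$ and the fact that near an infinite critical value of $f_0$ one has finite critical values for nearby maps.

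Next, I would establish an analog of Theorem~\ref{rformula} at the reduced case. The action of $T$ on $1/(z-a)$ and $1/(z-a)^2$ now has an extra term from the simple fixed point at $\infty$: a contour integral of $dw/(f'(w)(f(w)-z)(w-a))$ over $|w|=R$ has, in the limit $R\to\infty$, a nonzero residue at $\infty$ because $f'(w)\to \sigma$ there. This produces, in $A-TA$, a constant plus a $1/z$ term in addition to the poles at $v_j$. The constant and $1/z$ parts are precisely what will encode $\partial\rho/\partial\sigma$ and $\partial\rho/\partial m$. The calculation on periodic orbits that annihilates the poles at the $b_k$ is the same algebraic identity as in the polynomial case; the multiple-critical-points case is handled by the same approximation via simple-critical-point maps.

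For the transfer to the parameter space, I would set up an analytic family $\nu(z,t)$ of $f$-invariant Beltrami forms (vanishing in the basin of $O$), and solve $\bar\partial h_t = \nu(z,t)\,\partial h_t$ with the normalization that preserves $\Lambda_{d,\bar p'}$: namely $h_t(\infty)=\infty$ and suitable fixed asymptotics at $\infty$. Unlike the polynomial case, $h_t(z)$ is not forced to be tangent to the identity at $\infty$; rather, the normalization only pins down how $\sigma$ and $m$ transform, and (\ref{z'short}) must be replaced by a more general formula expressing $\partial h_t/\partial t|_{t=0}(z)$ as a Cauchy-like integral of $\mu$ up to a specific affine correction. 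Then Lemma~\ref{linspeed} (whose proof is purely local and works unchanged here) supplies $\rho'(0)/\rho$ as a limit of integrals of $\mu/(z-b)^2$ over a fundamental annulus near $b\in O$.

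The concluding step is to integrate the identity $A-TA=\sum L_j/(z-v_j)$ against $\mu$ over the plane with small neighborhoods of $O$ and of the critical orbit removed, using (\ref{invruelle}). The periodic-orbit terms contract to $\rho'(0)/\rho$ by Lemma~\ref{linspeed}, the finite-critical-value terms contribute $L_j v_j'(0)$ as in the polynomial case, and the contributions from the complementary region near $\infty$ and from the infinite critical values are what force (\ref{rpartialrest}) and (\ref{rpartialinfv}): comparing the Laurent expansion $B(z)=\tilde\Gamma_1/z+\tilde\Gamma_2/z^2+O(z^{-3})$ against the $\infty$-contribution of $T$ on Cauchy kernels yields the $\sigma$ and $m$ derivatives. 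Finally, surjectivity of the map from Beltrami forms to tangent vectors $(\sigma'(0), m'(0), v_1'(0),\ldots,v_p'(0),(v_j^{-1})'(0))$, via the harmonic Beltrami extension of a holomorphic motion and Proposition~\ref{rlocal}, allows us to read off each partial derivative separately. I expect the main obstacle to be the bookkeeping at $\infty$: identifying precisely which affine ambiguity in $h_t$ is fixed by the $\Lambda_{d,\bar p'}$-normalization, and matching the resulting $O(1)$ and $O(1/z)$ terms of $\partial h_t/\partial t|_{t=0}$ with the coefficients $\tilde\Gamma_1,\tilde\Gamma_2$ from $B$.
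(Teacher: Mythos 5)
Your high-level plan --- reduce to $S_d$ and the hyperbolic case along a path, use a formal identity for $T$ applied to $A=B/\rho$, integrate it against an invariant Beltrami form, and read off all the partial derivatives from surjectivity of the tangent map to $\bar v$ --- matches the paper. There is, however, a concrete error in the residue step that would derail the formal identity. You claim the contour integral of $dw/(f'(w)(f(w)-z)(w-a))$ over $|w|=R$ acquires a nonzero residue at $\infty$ because $f'(w)\to\sigma$, so that $A-TA$ should carry an extra constant plus a $1/z$ term. This is false: with $f(w)=\sigma w+m+O(1/w)$ the integrand is $\frac{1}{\sigma^2 w^2}(1+O(1/w))$, still $O(1/w^2)$ with no $1/w$ term, so the residue at $\infty$ vanishes exactly as in the polynomial case. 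Lemma~\ref{rT} and Theorem~\ref{rformula} hold verbatim for rational $f$ fixing $\infty$: $A-TA=\sum_j L_j/(z-v_j)$ with no additional terms. The $\sigma$- and $m$-derivatives do not enter through extra terms in the formal identity. They enter through (i) the Laurent coefficients $\Gamma_1,\Gamma_2$ of $A$ at $\infty$, which satisfy the consistency relations $\frac{\sigma-1}{\sigma}\Gamma_1=\sum_j L_j$ and $\frac{m}{\sigma}\Gamma_1=-\sum_j v_j L_j$ (Lemma~\ref{rconn}), and (ii) the boundary contribution near $\infty$ in the adjoint integral, which produces $\Gamma_1\int_{C^*_R}\mu/z$ and $\Gamma_2\int_{C^*_R}\mu/z^2$; the $\Gamma_2$ piece is $\pi\sigma'(0)/\sigma$ via the linearization at $\infty$ (Lemma~\ref{rlinspeed}), while the $\Gamma_1$ piece together with the affine ambiguity of $h_t$ assembles into $(\Gamma_1/\sigma)m'(0)$ only after the residue evaluation of $m'(0)$ (Lemma~\ref{m'}). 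Carrying spurious constant and $1/z$ terms in $A-TA$ would either double-count or misassign these contributions.

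A secondary slip: you say the analytic family $\nu(z,t)$ should ``vanish in the basin of $O$.'' That is backwards: the deformation must be supported precisely where the dynamics is being modified, which is the attracting basin of $O$ (otherwise $\rho'(0)=0$ and the argument is vacuous). In the polynomial case $\nu$ vanishes in the basin of $\infty$; in the rational case the paper explicitly does \emph{not} assume $\nu$ vanishes near $\infty$, and indeed $\nu$ must be allowed to be nonzero near $\infty$ for $\sigma'(0)$ and $m'(0)$ to be nontrivial. Apart from these two issues, the outline is correct: the reduction to $S_d$ via path-connectedness and the Monodromy/Uniqueness argument, the passage to multiple critical points by the perturbation $f_t+\epsilon z$, and the limiting coordinate change $v_j\mapsto 1/v_j$ for infinite critical values are all the paper's route.
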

The proof is very similar to the one for polynomials, and
is based on the Teichmuller theory of
rational maps. However, it is more technical, because of two
extra parameters $\sigma$, $m$ at $\infty$, 
see Sects.~\ref{rthm}-\ref{infcrv}. 
\subsection{Cusps}\label{rcusp}
Here we consider, very similar to the polynomial case, Subsect.~\ref{cusp}, 
the remaining case $\rho=1$, under the
assumption that the periodic orbit $O=\{b_1,...,b_n\}$ of $f$ is
{\it non-degenerate}: $(f^n)''(b_j)\not=0$,
for some, hence, for any $j=1,...,n$.
Then, for any rational function $g$, which is close to $f$, the map
$g$ in a small neighborhood of $O$
has either precisely two different periodic
orbits $O^\pm_g$ of period $n$ with multipliers
$\rho^\pm\not=1$, or precisely one periodic orbit $O_g$
of period $n$ with the multiplier $1$.

Now, suppose $f\in \Lambda_{d, \bar p'}^{q'}$, 
and let $f_i$, $i=1,2,...$,
be any sequence of maps from $\Lambda_{d, \bar p'}$, such that
$f_i\to f$, $i\to \infty$. We assume that each $f_i$ has a periodic orbit
$O_i$ near $O$, such that
its multiplier $\rho_i\not=1$. In other words, $O_i$
is one of the periodic orbits $O^\pm_{f_i}$.
As in Subsect.~\ref{cusp}, the sequence of functions
$\hat B_i(z)=(1-\rho_i)B_{O_i}(z)$
tends, as $i\to \infty$, to the rational function 
$\hat B(z)=\sum_{b\in O}\frac{(f^n)''(b)}{z-b}$.
As in Subsect.~\ref{cusp}, Theorem~\ref{rmain} implies: 
\begin{prop}\label{rmaincusp}
The following finite limits exist:
\begin{equation}\label{rpartialneutral}
\frac{\hat{\partial} \rho}{\partial v_j}
:=\lim_{i\to \infty}(1-\rho_i)\frac{\partial \rho_i}{\partial v_j}, 
\ \ \ j=1,...,p,
\end{equation} 
\begin{equation}\label{rpartialneutrals}
\frac{\hat{\partial} \rho}{\partial \sigma}
:=\lim_{i\to \infty}(1-\rho_i)\frac{\partial \rho_i}{\partial \sigma}, 
\ \ \ 
\frac{\hat{\partial} \rho}{\partial m}
:=\lim_{i\to \infty}(1-\rho_i)\frac{\partial \rho_i}{\partial m}.
\end{equation}
Then we have:
\begin{equation}\label{rruelleneutral}
\hat B(z)-(T\hat B)(z)= \sum_{j=1}^{p}\frac{\hat{\partial} \rho}{\partial
v_j}\frac{1}{z-v_j}.
\end{equation}
The formula~(\ref{rpartialv}) holds, where one replaces
$\rho$ and $B$ by $\hat\rho$ and $\hat B$ respectively.
Also,
\begin{equation}\label{rpartialneutralrest}
\frac{\hat{\partial} \rho}{\partial
\sigma}=\frac{\hat{\Gamma}_2}{\sigma}, \ \ \ \ \ \frac{\hat{\partial}
\rho}{\partial m}=\frac{\hat{\Gamma}_1}{\sigma},
\end{equation}
where 
$\hat{\Gamma}_1$, 
$\hat{\Gamma}_2$ are defined by the
expansion
$$\hat B(z)=\frac{\hat{\Gamma}_1}{z}+
\frac{\hat{\Gamma}_2}{z^2}+O(\frac{1}{z^3})$$
at infinity:
\begin{equation}\label{rexpinfneutral}
\hat{\Gamma}_1=\sum_{k=1}^n (f^n)"(b_k), \ \ \ \
\hat{\Gamma}_2= \sum_{k=1}^n b_k(f^n)"(b_k).
\end{equation}
Furthermore, if $f$ and $f_i$ are in $\Lambda_{d, \bar p'}^{q'}$, then,
for every $j=1,...,p$, there exists a finite limit
\begin{equation}\label{rpartialneutralV}
\frac{\hat{\partial}^V \rho}{\partial v_j}
:=\lim_{i\to \infty}(1-\rho_i)\frac{\partial^V \rho_i}{\partial v_j},
\end{equation} 
and
\begin{equation}\label{rruelleneutralV}
\hat B(z)-(T\hat B)(z)= \sum_{k=1}^{q}\frac{\hat{\partial}^V \rho}{\partial
V_k}\frac{1}{z-V_k}.
\end{equation}
\end{prop}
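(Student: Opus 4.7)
The plan is to mirror the proof of Proposition~\ref{maincusp} from the polynomial case, using Theorem~\ref{rmain} in place of Theorem~\ref{main}. For every $i$, I apply~(\ref{rruelle}) to $f_i$ and $O_i$ and multiply both sides by $1-\rho_i$, so that
\[
\hat B_i(z)-(T_{f_i}\hat B_i)(z)=\sum_{j=1}^{p}(1-\rho_i)\frac{\partial \rho_i}{\partial v_j^{(i)}}\,\frac{1}{z-v_j^{(i)}}.
\]
Since $f_i\to f$, the finite data (critical points $c_j^{(i)}$, critical values $v_j^{(i)}$, and orbit $b_k^{(i)}$) converge to those of $f$, and the explicit expression $\hat B_i(z)=\sum_{k}\{\rho_i(1-\rho_i)/(z-b_k^{(i)})^2+(f_i^n)''(b_k^{(i)})/(z-b_k^{(i)})\}$ together with $\rho_i(1-\rho_i)\to 0$ yields $\hat B_i\to\hat B$ locally uniformly off $O$. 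From $f_i\to f$ it then follows that $T_{f_i}\hat B_i\to T\hat B$ locally uniformly off the critical values of $f$, so the left-hand side of the displayed identity converges to $\hat B(z)-(T\hat B)(z)$.

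The crux is to show that each coefficient on the right-hand side has a finite limit. For the $v_j$-derivatives I invoke the integral form in~(\ref{rpartialv}):
\[
(1-\rho_i)\frac{\partial \rho_i}{\partial v_j^{(i)}}=-\frac{1}{2\pi i}\int_{|w-c_j^{(i)}|=r}\frac{\hat B_i(w)}{f_i'(w)}\,dw .
\]
For $r>0$ fixed small and $i$ large, the contour encloses only the critical point $c_j$ of $f$; $\hat B_i\to\hat B$ and $f_i'\to f'$ uniformly on the contour, so the integral converges to $-\frac{1}{2\pi i}\int_{|w-c_j|=r}\hat B(w)/f'(w)\,dw$. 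This proves both the existence of the limit~(\ref{rpartialneutral}) and the claimed $\hat B$-version of~(\ref{rpartialv}), after expanding by residues as in the proof of Theorem~\ref{rformula}. For the two $\infty$-coordinates I apply~(\ref{rpartialrest})--(\ref{rexpinf}) to $f_i$ and multiply by $1-\rho_i$:
$(1-\rho_i)\tilde\Gamma_1^{(i)}=\sum_k (f_i^n)''(b_k^{(i)})\to\hat\Gamma_1$ and $(1-\rho_i)\tilde\Gamma_2^{(i)}=n\rho_i(1-\rho_i)+\sum_k b_k^{(i)}(f_i^n)''(b_k^{(i)})\to\hat\Gamma_2$, since $\rho_i(1-\rho_i)\to 0$. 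Dividing by $\sigma(f_i)\to\sigma$ gives the existence of~(\ref{rpartialneutrals}) together with the identities~(\ref{rpartialneutralrest}).

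Once every coefficient on the right of the multiplied identity has a limit and $v_j^{(i)}\to v_j$, the whole right-hand side converges locally uniformly off the $v_j$'s to $\sum_{j=1}^{p}(\hat\partial\rho/\partial v_j)/(z-v_j)$; comparing with the convergent left-hand side gives~(\ref{rruelleneutral}). The $V$-version~(\ref{rruelleneutralV}) and the existence~(\ref{rpartialneutralV}) then follow by grouping together those $j$ with $v_j=V_k$, exactly as in~(\ref{vV}); the grouping is legitimate because membership of $f_i,f$ in $\Lambda_{d,\bar p'}^{q'}$ preserves the coincidence pattern of critical values. The only mild obstacle I anticipate is the bookkeeping around the two $\infty$-parameters: the decisive point is the cancellation $\rho_i(1-\rho_i)\to 0$ inside $\tilde\Gamma_2^{(i)}$, which is what extracts the correct limit $\hat\Gamma_2$ from a formula whose two summands individually blow up.
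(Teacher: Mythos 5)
Your argument is correct and follows exactly the route the paper indicates (the paper itself only gestures at the proof, saying ``as in Subsect.~\ref{cusp}, Theorem~\ref{rmain} implies''): multiply~(\ref{rruelle}) for $f_i,O_i$ by $1-\rho_i$ and pass to the limit. The one place where you supply genuinely needed detail the paper leaves implicit is the use of the contour-integral form of~(\ref{rpartialv}) (and the explicit formulas~(\ref{rpartialrest})--(\ref{rexpinf}) for the $\sigma$- and $m$-derivatives) to guarantee convergence of each coefficient individually --- without this, convergence of the two sides of the identity only controls the grouped sums over coinciding $v_j$'s.
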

\subsection{Multipliers and local coordinates}\label{rlocalsec}
\paragraph{Multipliers versus critical values.}
Theorem~\ref{rmain}, Proposition~\ref{rmaincusp} 
and the contraction property of $T$ yield the
following.
\begin{theo}\label{rattr}
Suppose that $f\in \Lambda_{d, \bar p'}^{q'}$, and
let $V_1,...,V_q$ be all the different and finite critical values
of $f$.
Suppose that $f$
has a collection $O_1,...,O_r$ of
$r$ different finite periodic orbits with the corresponding
multipliers $\rho_1,...,\rho_r$, such that each $O_j$ is
non-repelling: $|\rho_j|\le 1$, $j=1,...,r$. 
Assume that, if, for some $j$, $\rho_j=1$, then the periodic
orbit $O_j$ is non-degenerate. 
Denote by
$\tilde{\partial}^V \rho_j/\partial V_k$ the 
$\partial^V \rho_j/\partial V_k$ iff $\rho_j\not=1$ and
$\hat\partial^V \rho_j/\partial V_k$ iff $\rho_j=1$.
Similar notation stands for $\tilde{\partial}^V \rho_j/\partial \sigma$.

Assume also that if $\rho_j=0$, then the periodic
orbit $O_j$ contains a single critical point, and it is simple.

($H_\infty$). If $\sigma\not=1$ and $m=0$, i.e.,
$f(z)=\sigma z+O(1/z)$ as $z\to \infty$, 
then, for every $1\le k\le q$, such that $V_k\not=0$,
the rank of the following $q\times r$ matrix
\begin{equation}\label{rmdeltacut}
{\bf O}=(\frac{\tilde{\partial} \rho_j}{\partial \sigma}, 
\frac{\tilde{\partial}^V
\rho_j}{\partial V_1},..., \frac{\tilde{\partial}^V \rho_j}{\partial
V_{k-1}}, \frac{\tilde{\partial}^V \rho_j}{\partial V_{k+1}},...,
\frac{\tilde{\partial}^V \rho_j}{\partial V_q})_{1\le j\le r}
\end{equation}
is equal to $r$.

($H_\infty^{attr}$). If $\sigma\not=1$ and $m=0$, and, additionally,
$|\sigma|\ge 1$, i.e.,
$\infty$ is either attracting or neutral fixed point, 
then, for every $1\le k\le q$, such that $V_k\not=0$,
the rank of the following $q-1\times r$ matrix
\begin{equation}\label{rmdeltacuta}
{\bf O^{attr}}=(\frac{\tilde{\partial}^V
\rho_j}{\partial V_1},..., \frac{\tilde{\partial}^V \rho_j}{\partial
V_{k-1}}, \frac{\tilde{\partial}^V \rho_j}{\partial V_{k+1}},...,
\frac{\tilde{\partial}^V \rho_j}{\partial V_q})_{1\le j\le r}
\end{equation}
is equal to $r$.

($NN_\infty$). If $\sigma=1$ and $m\not=0$, then, for every 
$1\le k\le q$, 
the rank of the following $q-1\times r$ matrix
\begin{equation}\label{rmdeltacutn}
{\bf O^{neutral}}=(\frac{\tilde{\partial}^V
\rho_j}{\partial V_1},..., \frac{\tilde{\partial}^V \rho_j}{\partial
V_{k-1}}, \frac{\tilde{\partial}^V \rho_j}{\partial V_{k+1}},...,
\frac{\tilde{\partial}^V \rho_j}{\partial V_q})_{1\le j\le r}
\end{equation}
is equal to $r$.

($ND_\infty$). Finally, if $\sigma=1$ and $m=0$, then, for every 
$1\le k<l\le q$, 
the rank of the following $q-2\times r$ matrix
\begin{equation}\label{rmdeltacutnn}
{\bf O_0^{neutral}}=(\frac{\tilde{\partial}^V
\rho_j}{\partial V_1},..., \frac{\tilde{\partial}^V \rho_j}{\partial
V_{k-1}}, \frac{\tilde{\partial}^V \rho_j}{\partial V_{k+1}},...,
\frac{\tilde{\partial}^V \rho_j}{\partial
V_{l-1}}, \frac{\tilde{\partial}^V \rho_j}{\partial V_{l+1}},...,
\frac{\tilde{\partial}^V \rho_j}{\partial V_q})_{1\le j\le r}
\end{equation}
is equal to $r$.
\end{theo}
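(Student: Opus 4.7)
Following the template of the proof of Theorem~\ref{attr}, I will argue by contradiction. Assume the relevant matrix has rank strictly less than $r$, so that there exist constants $c_1,\ldots,c_r$, not all zero, such that the linear combination $L:=\sum_{j=1}^{r}c_j\tilde B_j$ pairs trivially with every column of the matrix. Summing (\ref{rruelleV}) and (\ref{rruelleneutralV}) with these coefficients, and using (\ref{rpartialrest}) and (\ref{rpartialneutralrest}) to translate the $\sigma$- and $m$-columns into the $z^{-2}$- and $z^{-1}$-Laurent coefficients of $L$ at $\infty$, this becomes two simultaneous statements: the rational function $L-TL$ has poles only at those critical values $V_i$ omitted from the column set; and each Laurent coefficient of $L$ at $\infty$ corresponding to a present column ($\sigma$ for $z^{-2}$, $m$ for $z^{-1}$) is forced to be zero. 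As in the polynomial proof, $L$ is not identically zero: the orbits $O_j$ are pairwise distinct with disjoint pole-sets of the $\tilde B_j$, and the borderline-case hypotheses ($\rho_j=0$ implying a single simple critical point, and $\rho_j=1$ implying non-degeneracy) ensure that each individual $\tilde B_j\not\equiv 0$.

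Next I would construct a domain $V_{r,R}\subset\Ci$ by removing: (i) the basin of each attracting orbit among $O_1,\ldots,O_r$; (ii) for each neutral finite orbit $O_{j'}$, the disk $B(b_1^{j'},r)$ together with its chain of iterated preimages along $O_{j'}$, exactly as in the polynomial proof; and (iii) a neighborhood of $\infty$ whose shape depends on the case, being $B^*(R)$ when $\infty$ is repelling, the basin of $\infty$ when $|\sigma|>1$, and a standard parabolic petal when $\sigma=1$. The contraction inequality (\ref{rcontra}) applied to $|L-TL|$ gives
$$\int_{V_{r,R}}|L|\,d\sigma_z \;-\; \int_{f^{-1}(V_{r,R})}|L|\,d\sigma_z \;\le\; \int_{V_{r,R}}|L-TL|\,d\sigma_z,$$
and analyzing $V_{r,R}$ versus $f^{-1}(V_{r,R})$ as in the polynomial proof reduces the argument to comparing an annulus integral of $|L|$ near $\infty$ on the left with a bound of the form $\sum_{j'}C_{j'}\log\lambda^{-1}+O_R(1)$ on the right, where the constants $C_{j'}$ are read off from the double-pole coefficients of $L$ at the neutral finite orbits.

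The main obstacle is verifying in each of the four cases that the constraints on the Laurent tail of $L$ at $\infty$, combined with the shape of the removed $\infty$-neighborhood, force the left-hand annulus integral to diverge as $R\to\infty$ while the right-hand side stays finite for $\lambda$ close to $1$. The bookkeeping is case-specific: in $H_\infty$ the $z^{-2}$-coefficient of $L$ vanishes but not its $z^{-1}$-coefficient, and the annulus $f^{-1}(B^*(R))\setminus B^*(R)$, which expands like $\{R<|z|<R/|\sigma|\}$ when $|\sigma|<1$, carries an unbounded integral of the $z^{-1}$-tail; in $H_\infty^{attr}$ removing the basin of $\infty$ absorbs this at the cost of the $\sigma$-column; the parabolic cases $NN_\infty$ and $ND_\infty$ are the most delicate, requiring a petal around $\infty$ chosen so that $\infty$ itself contributes a $\log\lambda^{-1}$-type annular term, and so that the number of Laurent coefficients of $L$ at $\infty$ left unconstrained matches the number of columns excluded from the matrix. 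I expect the correct matching between dynamical removal at $\infty$ and excluded matrix columns to be the technically heaviest step, parallel to but more intricate than the handling of finite neutral orbits in the polynomial case.
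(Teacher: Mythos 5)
Your high-level strategy (linear dependence of rows gives a nonzero combination $\psi=\sum c_j\tilde B_j$, then use contraction of $T$) matches the paper's. However, there is a genuine gap in the middle step, and your proposal leaves it unresolved by your own admission.

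The paper does \emph{not} carry out a direct domain-integration argument on $|L-TL|$ with $L-TL\neq 0$. Instead it first establishes, by purely algebraic bookkeeping, that $\psi-T\psi\equiv 0$. The mechanism is to compare the Laurent expansion of $\psi-T\psi$ at $\infty$ coming from the transfer-operator identity (essentially equations~(\ref{rta})--(\ref{ra-ta}), which give $\psi-T\psi=\frac{\sigma-1}{\sigma}M_1/z-\frac{mM_1}{\sigma z^2}+O(1/z^3)$, with $M_1,M_2$ the tail coefficients of $\psi$) against the Laurent expansion of the right-hand side $\sum L_i/(z-V_i)$, whose $1/z$ and $1/z^2$ coefficients are $\sum L_i$ and $\sum L_i V_i$. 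The case hypotheses then force the surviving $L_i$ to vanish: e.g.\ in ($H_\infty$), the $\sigma$-column being present kills $M_2$, $m=0$ kills the $1/z^2$ coefficient of $\psi-T\psi$, and $V_k\neq 0$ then forces $L=0$; in ($ND_\infty$) the two vanishing tail coefficients give the $2\times 2$ Vandermonde system $L_{q-1}+L_q=0$, $L_{q-1}V_{q-1}+L_qV_q=0$ with $V_{q-1}\neq V_q$. You correctly observe that the $\sigma$-column translates to a $z^{-2}$ constraint via~(\ref{rpartialrest}), but you also invoke an $m$-column, which never appears in any of the matrices; $m$ is fixed by the case hypotheses, and it is precisely this fixing (plus $\sigma\neq1$, or $V_k\neq 0$, or $V_{q-1}\neq V_q$) that closes the algebra. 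Without that reduction, you are stuck trying to run a contraction estimate with a nonzero right-hand side, and as you say yourself, it is not clear how to make the estimates come out.

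Once $\psi$ is known to be an exact fixed point of $T$ with the appropriate decay at $\infty$ ($O(1/z^3)$, or $O(1/z)$ with $|\sigma|\ge 1$, or $O(1/z^2)$ with $\sigma=1$), the paper invokes the separately proved Proposition~\ref{rcontr}. That proposition's proof is not the naive ``annulus integral diverges'' estimate you sketch: it shows $\limsup_{r\to 0}\bigl\{\int_{f^{-1}(V_r)}|\psi|-\int_{V_r}|\psi|\bigr\}\le 0$ with strict inequality in the (super)attracting case, deduces equality in the triangle inequality for $T\psi$, concludes that $\psi\circ f\cdot (f')^2/\psi$ is the constant $d$, and then derives a contradiction from the local expansion $\psi(z)\sim A(z-b)^l$ near a non-repelling periodic point (yielding $\rho^{l+2}=d^n$, impossible for $|\rho|\le 1$). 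Your sketch does not contain the ``equality in the triangle inequality $\Rightarrow$ functional equation $\Rightarrow$ local expansion'' chain, which is what actually delivers the contradiction in the neutral cases. So the two load-bearing steps — the Laurent-coefficient reduction to a genuine fixed point and the refined fixed-point exclusion of Proposition~\ref{rcontr} — are both missing, and these are precisely the places where you flag uncertainty.
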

For the proof, see Sect.~\ref{rattrs}.
\begin{com}\label{rfatou}
Note that $r\le q$ in the case ($H_\infty$),
$r\le q-1$ in the cases ($H_\infty^{attr}$) and
($NN_\infty$), and $r\le q-2$
in the case ($ND_\infty$) (at least two attracting petals 
at infinity). 
These bounds follow from 
the Fatou-Shishikura inequality, 
see~\cite{shi} and references therein. 
See also Comment~\ref{rlast}. 
\end{com}
\paragraph{Moduli spaces.}
Here we discuss a moduli space of rational
functions associated to a given one with respect to the standard
equivalence relation.
Then we 
apply Theorem~\ref{rattr}.
Most considerations in this paragraph are quite straightforward
consequences of Proposition~\ref{rlocal} and Theorem~\ref{rattr}.

Suppose that $f$ is an arbitrary rational function of degree $d\ge 2$.
Denote by $p'$ and $q'$ respectively 
the number of different critical points and critical values of $f$
in the Riemann sphere, and by $\bar p'$ the vector of multiplicities
at the critical points.
Introduce
a space $Rat^f$ of rational functions $g$ of degree
$d\ge 2$, such that $f$ and $g$ have the same (up to a permutation)
vector $\bar p'$ of multiplicities
at different critical points in the Riemann sphere, and the same number
$q'$ of different critical values. Define the moduli space $Mod^f$ to
be the quotient space $Mod^f=Rat^f/\sim$, 
where $f_1\sim f_2$ iff $f_1,f_2$ are
conjugated by a Mobius transformation.
Denote by $[g]$ the equivalence class of $g\in Rat^f$.
Clearly, $Rat^f$ as well as $Mod^f$ depend merely on $[f]$.
Note that the multiplier of a periodic orbit of $g$
is invariant by a holomorphic conjugation. Therefore, one can speak about
the multiplier of a periodic orbit of the class $[g]$.

The rational function $f$ has a fixed point, which
is either repelling, or has the multiplier $1$ (see e.g.~\cite{mil}). 
Therefore, there is an alternative:
either {\bf (H)} $f$ has a fixed point $a$, such that $f'(a)\not=0,1$,
or {\bf (N)} the multiplier of
every fixed point of $f$ is either $0$ or $1$,
and there is a fixed point with the multiplier $1$. The case {\bf (N)}
is degenerate. We consider each case separately and introduce
a kind of cross-section in the moduli space near $[f]$.

{\bf (H)}. Let $P$ be a Mobius transformation, such that $P(a)=\infty$.
Then $\tilde f=P\circ f\circ P^{-1}$
belongs to $\Lambda_{d, \bar p'}^{q'}$. Moreover,
$P$ can be chosen uniquely in such a way, that one of the critical values of
$\tilde f$ is equal to $1$, and $m(\tilde f)=0$, that is,
$\tilde f(z)=\sigma z+O(1/z)$ at infinity. 
Let us define a submanifold $\Lambda_{\tilde f}$ 
of $\Lambda_{d, \bar p'}^{q'}$
consisting of $g\in \Lambda_{d, \bar p'}^{q'}$
in a neighborhood of $\tilde f$, such that $m(g)=0$, 
and one of the critical values of $g$ is identically equal to $1$.
Introduce the vector 
$$\bar V_{\tilde f}(g)=\{\sigma(g), V_1(g),...,V_{q'-2}(g), V_{q'-1}^*(g)\},$$
such that $V_1(g),...,V_{q'-1}(g), 1$ are all different
critical values of $g$,
and $V_1(g),...,V_{q'-2}(g)$ are finite
while $V_{q'-1}^*(g)=V_{q'-1}(g)$ iff $V_{q'-1}(\tilde f)$ is finite
and $V_{q'-1}^*(g)=1/V_{q'-1}(g)$ otherwise. 
We denote $q=q'$ in the former case, and $q=q'-1$ in the latter one.
We see from Proposition~\ref{rlocal}, that $\bar V_{\tilde f}$
is a local coordinate of $\Lambda_{\tilde f}$: the correspondence
$g\in \Lambda_{\tilde f}\mapsto \bar V_{\tilde f}(g)\in {\bf C}^{q'}$
is biholomorphic from the manifold $\Lambda_{\tilde f}$ onto a neighborhood
of the point $\bar V_{\tilde f}(\tilde f)$ in ${\bf C}^{q'}$.
Now we have a natural projection $[.]_V: \bar V\mapsto [g]$ from 
a neighborhood of $\bar V_{\tilde f}(\tilde f)\in {\bf C^{q'}}$
into the space $Mod^{\tilde f}$, where $[\bar V]_V$ is said to be
the equivalence class of the
unique $g\in \Lambda_{\tilde f}$, such that $\bar V_{\tilde f}(g)=\bar V$.

{\bf (N)}. There are two subcases to distinguish.

{\bf (NN)}: $f$ has a fixed point $a$, such that 
$f'(a)=1$ and $f''(a)\not=0$. 
Let $P$ be a Mobius transformation, such that $P(a)=\infty$.
Then $\tilde f=P\circ f\circ P^{-1}$
belongs to $\Lambda_{d, \bar p'}^{q'}$. Moreover,
$P$ can be chosen uniquely in such a way, that one of the critical values of
$\tilde f$ is equal to $1$, and $m(\tilde f)=1$, that is,
$\tilde f(z)=z+1+O(1/z)$ at infinity. 
Then we define $\Lambda_{\tilde f}$ to be the set of all
$g\in \Lambda_{d, \bar p'}^{q'}$
in a neighborhood of $\tilde f$, such that $m(g)=1$, 
and the critical value $V_{q'}(g)$
of $g$ (which is close to $V_{q'}(\tilde f)=1$) is identically equal to $1$.
The vector $\bar V_{\tilde f}$ is defined like in the previous case.
It is a coordinate in $\Lambda_{\tilde f}$. 
As above, there is the projection
$[.]_V: \bar V\mapsto [g]$ from 
a neighborhood of $\bar V_{\tilde f}(\tilde f)$ in ${\bf C^{q'}}$
into the space $Mod^{\tilde f}$.

{\bf (ND)}: every fixed point with multiplier $1$
is degenerate. Let $a$ be one of them: $f'(a)=1$ and
$f''(a)=0$. Then the Mobius map $P$ can be chosen
uniquely in such a way, that  
$\tilde f(z)=P\circ f\circ P^{-1}(z)=
z+O(1/z)$, and $\tilde f$ has a critical value equal to $1$
in one attracting petal of $\infty$, and equal to $0$
in another attracting petal of $\infty$. 
Then $\Lambda_{\tilde f}$ consists of $g\in \Lambda_{d, \bar p'}^{q'}$
in a neighborhood of $\tilde f$, such that the 
critical value of $g$, which is close to $V_{q'-1}(\tilde f)=1$ 
is identically equal to
$1$, and the critical value of $g$, which is close to $V_{q'}(\tilde f)=0$,
is identically equal to $0$. 
Define
$$\bar V_{\tilde f}(g)=
\{\sigma(g), m(g), V_1(g),...,V_{q'-3}(g), V_{q'-2}^*(g)\},$$
such that $V_1(g),...,V_{q'-2}(g), 1, 0$ are all different
critical values of $g$,
and $V_1(g),...,V_{q'-3}(g)$ are finite
while $V_{q'-2}^*(g)=V_{q'-2}(g)$ iff $V_{q'-2}(\tilde f)$ is finite
and $V_{q'-2}^*(g)=1/V_{q'-2}(g)$ otherwise. 
We denote $q=q'-1$ in the former case, and $q=q'-2$ in the latter one.
We see from Proposition~\ref{rlocal}, that $\bar V_{\tilde f}$
is a local coordinate on $\Lambda_{\tilde f}$. 
There is the projection $[.]_V: \bar V\mapsto [g]$. 

\

It is not hard to understand that the map $[.]_V$ sends a small 
neighborhood
of the point $\bar V_{\tilde f}(\tilde f)$ in ${\bf C^{q'}}$
onto a neighborhood of the point $[\tilde f]=[f]$ in $Mod^f$.
In fact, the map $[.]_V$ defines a complex $q'$-orbifold structure
on $Mod^f$ (see e.g.~\cite{McM} for the definition of an orbifold).

Let us reformulate Theorem~\ref{rattr} for the map $[.]_V$.
Suppose that $[f]$ has a collection of
$r$ different {\it non-repelling} periodic orbits 
with 
multipliers $\rho^0_1,...,\rho^0_r$, i.e. $|\rho^0_j|\le 1$. 
Assume additionally that
$\rho^0_j\not=1$, $j=1,...,r$, and, if $\rho^0_j=0$, for some $j$, 
then the corresponding periodic
orbit contains a single and simple critical point of the map.
Let us consider a map $\tilde f$ corresponding to $f$, and 
fix an order $O_1,...,O_r$ of the above periodic orbits of $\tilde f$. 
We then
have a vector $(\rho^0_1,...,\rho^0_r)$ of their multipliers.
If the map changes, the multipliers become functions $\rho_j(g)$ 
of the map $g$. In particular, $\rho_j(\tilde f)=\rho_j^0$.
Then Theorem~\ref{rattr} implies the following:
\begin{theo}\label{rattrmod}
There are $r$ indexes $1\le j_1\le...\le j_r\le q'$
as follows.
One can replace the map $[.]_V$ 
by another map $[.]_\rho: \bar V_\rho\to Mod^{\tilde f}$ 
defined in a neighborhood
of a point $\bar V_\rho(\tilde f)\in {\bf C}^{q'}$,
where $\bar V_\rho$ is obtained from $\bar V_{\tilde f}$
by replacing the coordinates with indexes $j_1,...,j_r$
by $\rho_{j_1},...,\rho_{j_r}$ respectively.
The change of variables is biholomorphic.
Moreover, in the case (H), if $|\sigma(\tilde f)|\ge 1$, 
and in the case (NN) the above $r$ coordinates
in $\bar V_{\tilde f}$ can be chosen among the critical values 
$V_1,...,V_{q'-1}$, while in the case (ND) they can be chosen
among the critical values $V_1,...,V_{q'-2}$.
\end{theo}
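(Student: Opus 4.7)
The plan is to deduce Theorem~\ref{rattrmod} from Theorem~\ref{rattr} by invoking the Implicit Function Theorem to swap $r$ coordinates of $\bar V_{\tilde f}$ on $\Lambda_{\tilde f}$ for the $r$ multipliers $\rho_{j_1},\dots,\rho_{j_r}$.

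First, by Proposition~\ref{rlocal} the map $g\mapsto \bar V_{\tilde f}(g)$ is a biholomorphism from $\Lambda_{\tilde f}$ onto a neighborhood of $\bar V_{\tilde f}(\tilde f)$ in $\C^{q'}$, and the projection $[\,\cdot\,]_V$ onto $Mod^{\tilde f}$ is an orbifold chart. Next, applying the Implicit Function Theorem to the equations $g^{n_j}(b)=b$, each periodic orbit $O_j$ and its multiplier $\rho_j$ extend holomorphically to $g\in\Lambda_{\tilde f}$ near $\tilde f$, so that $\rho_j$ becomes a holomorphic function of the coordinates $(X_1,\dots,X_{q'})=\bar V_{\tilde f}(g)$. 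I would then observe that the free coordinates of $\bar V_{\tilde f}$ are precisely the variables left over after the defining constraints of $\Lambda_{\tilde f}$ are imposed --- in case (H), $m(g)=0$ and one $V_k(g)=1$; in (NN), $m(g)=1$ and one $V_k(g)=1$; in (ND), $V_k(g)=1$ and $V_l(g)=0$ --- and that the corresponding $r\times q'$ Jacobian $\bigl(\partial\rho_j/\partial X_i\bigr)$ is exactly the matrix appearing in the appropriate case of Theorem~\ref{rattr} after omitting the columns of the constrained coordinates.

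Then I would invoke Theorem~\ref{rattr} in each case --- $(H_\infty)$ for (H), $(H_\infty^{attr})$ for (H) with $|\sigma|\ge 1$, $(NN_\infty)$ for (NN), $(ND_\infty)$ for (ND) --- to conclude that this Jacobian has rank $r$. By linear algebra there exist indices $1\le j_1<\dots<j_r\le q'$ such that the $r\times r$ minor $\bigl(\partial\rho_{j_s}/\partial X_{j_t}\bigr)_{1\le s,t\le r}$ is invertible; the Implicit Function Theorem then yields a local biholomorphism that exchanges $X_{j_1},\dots,X_{j_r}$ for $\rho_{j_1},\dots,\rho_{j_r}$ while keeping the other components unchanged, and composing with $[\,\cdot\,]_V$ defines the new chart $[\,\cdot\,]_\rho$. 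For the ``moreover'' clauses, the sharper subcases $(H_\infty^{attr})$, $(NN_\infty)$, $(ND_\infty)$ give rank $r$ already for the submatrix in which the $\sigma$- and/or $m$-columns are deleted, so the indices $j_1,\dots,j_r$ can be chosen among the critical-value coordinates $V_1,\dots,V_{q'-1}$ (respectively $V_1,\dots,V_{q'-2}$ in case (ND)).

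The substantive work is contained entirely in Theorem~\ref{rattr}; the remaining obstacle is only notational --- aligning the column-exclusion patterns of the four reduced matrices ${\bf O}$, ${\bf O^{attr}}$, ${\bf O^{neutral}}$, ${\bf O_0^{neutral}}$ of Theorem~\ref{rattr} with the defining constraints of $\Lambda_{\tilde f}$ in each of the cases (H), (NN), (ND), and taking care of critical values at infinity, where the relevant coordinate is $v_j^{-1}$ and the relevant derivative is given by (\ref{rpartialinfv}) rather than (\ref{rpartialv}).
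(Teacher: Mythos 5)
Your proposal is correct and follows the same route the paper intends: the paper gives no explicit proof of Theorem~\ref{rattrmod}, stating only that it follows from Theorem~\ref{rattr}, and your argument supplies exactly the missing details (identify the coordinate vector $\bar V_{\tilde f}$ on $\Lambda_{\tilde f}$ via Proposition~\ref{rlocal}, observe that the matrix ${\bf O}$ of the relevant case of Theorem~\ref{rattr} is a submatrix of the Jacobian $(\partial\rho_j/\partial X_i)$ in these coordinates, hence the Jacobian has rank $r$, and apply the Implicit Function Theorem to exchange $r$ coordinates for the $r$ multipliers). One small imprecision worth noting: when $\tilde f$ has an infinite critical value the Jacobian on $\Lambda_{\tilde f}$ has $q'=q+1$ columns while ${\bf O}$ has only $q$; ${\bf O}$ is then a proper submatrix of the Jacobian (the extra column being $\partial\rho_j/\partial(1/V_{q'})$, given by~(\ref{rpartialinfv})), so one should say ``${\bf O}$ is a submatrix of the Jacobian, hence the Jacobian has rank $r$'' rather than that they coincide; your final remark shows you are aware of this, and the conclusion is unaffected.
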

Let us consider a particular case of Theorem~\ref{rattrmod}, which 
corresponds to 
maps with the maximal number of non-repelling periodic orbits.
Namely, assume that the number of such orbits is $r=q=q'$.
Then $[f]$ has necessarily a repelling fixed point, i.e. we are in the case
(H). Therefore, the map $[.]_\rho$
depends on $\rho_1,...,\rho_q$ only.
It has an invariance property as follows.
Suppose that
$[\rho_1,...\rho_q]_\rho=[\rho_1',...,\rho_q']_\rho$, 
for two vectors of the
multipliers $(\rho_1,...\rho_q)$, $(\rho_1',...\rho_q')$, which 
correspond to
maps $g, g'$ respectively. It is clear that then $g$ and $g'$
are conjugate by a Mobius transformation
$M$. In turn, it defines a permutation $\pi$
of $1,...,q$ in the collection
$O_1,...,O_q$: $M$ maps a periodic orbit $O_j(g)$ of $g$ to
the periodic orbit $O_{\pi(j)}(g')$ of $g'$. Then the invariance property is:
$\rho_{\pi(j)}'=\rho_j$, $j=1,...,q$. 
It has an interesting consequence: 

{\it In the above set up $r=q=q'$, i.e., if the number of non-repelling
periodic orbits with the multipliers different from $1$ is equal to the number
of different critical values, the map
$[.]_\rho$ is locally injective in each coordinate $\rho_j$, $j=1,...,q$.}

Indeed, if $[\rho_1,\rho_2,...\rho_q]_\rho=[\rho_1',\rho_2,...\rho_q]_\rho$,
then there is a permutation $\pi$ as above.
We have: if $\pi(1)=1$, then $\rho_1'=\rho_1$, and otherwise
$\rho_1=\rho_{\pi(1)}=\rho_{\pi^2(1)}=...=\rho_{\pi^{l-1}(1)}$,
where $l\ge 1$ is minimal so that $\pi^l(1)=1$. But $\pi(\pi^{l-1}(1))=1$,
hence, $\rho_1'=\rho_{\pi^{l-1}(1)}=\rho_1$.
 
\

Let us find the degree of the map $[.]_V$. 
We fix the manifold $\Lambda_{\tilde f}$ in a small enough
neighborhood of $\tilde f$ in such a way, that it is projected
onto a neighborhood of $[f]$ in $Mod^f$.
Denote by $|A|$ the number of points in a set $A$
(a priori, $|A|$ could be infinite).  
Given a rational function $g$, we denote by $Aut(g)$ the 
finite group of Mobius transformations that commute with $g$.
For $g\in \Lambda_{\tilde f}$, denote by ${\bf M(g)}$ the set of Mobius 
transformations $M$, such that  $M^{-1}\circ g\circ M\in \Lambda^{\tilde f}$.
Obviously, if $M\in {\bf M(g)}$ and $K\in Aut(g)$, then
$K\circ M\in {\bf M(g)}$. It follows, $|Aut(g)|$ divides $|{\bf M(g)}|$.
The quantity $|{\bf M(g)}|/|Aut(g)|$ is precisely
the number of different maps $\psi\in \Lambda^{\tilde f}$, such that
$\psi\in [g]$.

{\bf Claim.} {\it Let $g\in \Lambda_{\tilde f}$.
Then $|{\bf M(g)}|$  
is finite and equal to 
$$|{\bf M(g)}|=D_g:=\sum_{P\in Aut(\tilde f)} L_g(P(\infty)),$$
where $L_g(Z)$ is the number of geometrically
different fixed points of $g$ near a fixed point $Z$ of $\tilde f$. 
Consequently, 
the number of different maps $\psi\in \Lambda_{\tilde f}$, such that
$\psi\in [g]$, is equal to $\frac{D_g}{|Aut(g)|}$.
}

We will not prove it here (and will not use it in the paper)
although the consideration behind the proof is quite clear.
Namely, for every $P\in Aut(\tilde f)$ and every fixed point 
$Z_g$ of $g$ which is near $P(\infty)$
there is one and only one $M\in {\bf M(g)}$, such that $M(\infty)=Z_g$
and $M$ is close to $P$.

We have the bound: $|{\bf M(g)}|\le |Aut(\tilde f)| L_m$, where
$L_m$ is the maximal number of fixed points that can
appear from the fixed point $\infty$ of the map 
$\tilde f$ under a perturbation of the map. Let us be more precise.
For every $P\in Aut(\tilde f)$, the point $P(\infty)$
is a fixed point of $\tilde f$ with the same multiplier as at $\infty$.
We have: $1\le L_g(P(\infty))\le L_m$, where $L_m=1$ in the case 
$\sigma(\tilde f)\not=1$, and, if $\sigma(\tilde f)=1$, the $L_m\ge 2$
is defined by:
$\tilde f(z)=z+b/z^{L_m-2}+...$, $z\to \infty$, with $b\not=0$.
Let us call the number $L_m$ the 
multiplicity of the fixed point. It is defined similarly for
any fixed point with the multiplier $1$.

Let us discuss briefly several cases.
Assume first that $\sigma(\tilde f)\not=1$.
Then $|{\bf M(g)}|=|Aut(\tilde f)|$ 
(hence, independent of $g\in \Lambda^{\tilde f}$).
If, additionally, $Aut(f)=\{I\}$, then 
$[.]_V$ is injective. 
On the other hand, if
$\sigma(\tilde f)=1$ , let us assume that $g$ is non-degenerate, 
in a sense, that, firstly, $Aut(g)$ is trivial, and, secondly,
$L_g(P(\infty))=L_m$, i.e. $g$ has the maximal number of fixed points
near $P(\infty)$, for every $P\in Aut(\tilde f)$.
Then the number of different maps $\psi\in \Lambda_{\tilde f}$, such that
$\psi\in [g]$, is maximal and equal to $|{\bf M(g)}|=|Aut(\tilde f)| L_m$. 

Let us come back to the general case.
As $g$ changes and different $\psi\in [g]$ from $\Lambda_{\tilde f}$
collide, this corresponds either
to a collision of fixed points of $g$ near some $P(\infty)$
or to the appearence of new maps
in the group $Aut(g)$. To be more specific, given 
$g\in \Lambda_{\tilde f}$, define
an equivalence relation in the set of all fixed points
of $g$ near the set $\bar Z=\{P(\infty)\}_{P\in Aut(\tilde f)}$
as follows: two points $x, y\in \bar Z$ are equivalent if and only if there exists
$K\in Aut(g)$ so that $y=K(x)$. To every equivalence class in $\bar Z$
there corresponds one and only one map $\psi\in [g]$, such that
$\psi\in \Lambda_{\tilde f}$.
We define the multiplicity of $\psi$ as the sum of the
multiplicities of all fixed points of $g$ in this equivalence class
(note that the multiplicities of all fixed points of the same class
are equal).
With this definition, we have (without any restriction on $\tilde f$): 
for every $g\in \Lambda_{\tilde f}$, the total number of 
$\psi\in [g]$ in $\Lambda_{\tilde f}$
each counted with its
multiplicity is equal to $|Aut(\tilde f)| L_m$. 
\paragraph{Quadratic rational maps.}\label{quad}
For a degree two rational function $f$, $\bar p'=(1,1)$ and $q'=2$, hence,
$Rat_2=Rat^f$ is the set of all quadratic rational maps,
and $Mod_2=Mod^f$ is the space of orbits of the quadratic maps
by Mobius conjugations. It is easy to check that
the degree of the map $[.]_V$ takes values $1, 2$, or $6$. 
The spaces $Rat_2$ and $Mod_2$
have been studied intensively, see~\cite{mary},~\cite{mary1},
~\cite{maryaster},~\cite{Mild2}.
Global coordinates in $Mod_2$ are introduced in~\cite{Mild2}.
It turnes out $Mod_2$ is isomorphic to ${\bf C}^2$.
The problem of multipliers as coordinates 
for hyperbolic (and some neutral) degree $2$ rational maps 
is settled in~\cite{mary}. 
Theorem~\ref{rattr} allows us 
to deal with not necessary hyperbolic maps.
Let us state its corollary for degree two.

Suppose $f$ is a rational function of degree $2$ that has a 
periodic orbit $O$ with the multiplier $\rho$, such that $|\rho|\le 1$.
If $\rho=0$, assume that the orbit $O$ contains a single critical point.
If $\rho=1$, assume that $O$ is not degenerate (i.e. each point of $O$
has only one attracting petal) and, moreover, if $f$ is  
conjugate to $z^2+1/4$, then $O$ is not its neutral fixed point. 
Then, after a Mobius change of coordinates, 
$f(z)=\sigma z+m+O(1/z)$ and $O\not=\infty$,
and also one of the (two) different critical values $v_1, v_2$
of $f$, say, $v_2$ is not zero. 
Moreover, if $\sigma\not=1$, one can further assume 
that $m=0$. As usual, the multiplier $\rho$ is a function
of $\sigma, m, v_1, v_2$
(for the moduli space, one can keep $m$ and $v_2$ fixed though,
see the general discussion above). 
\begin{coro}\label{corquad}
For $\rho\not=1$, the vector 
$(\partial \rho/\partial \sigma, \partial \rho/\partial v_1)$ is not zero,
and for $\rho=1$, the vector 
$(\partial \hat\rho/\partial \sigma, \partial \hat\rho/\partial v_1)$ 
is not zero.
Moreover, under the condition $|\sigma|\ge 1$
(i.e., the fixed point at $\infty$ is not repelling), we have:  
$$\partial \rho/\partial v_1\not=0$$ 
for $\rho\not=1$, and 
$$\partial \hat\rho/\partial v_1\not=0$$ 
for $\rho=1$.
\end{coro}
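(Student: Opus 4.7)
The plan is to deduce Corollary~\ref{corquad} as a direct specialization of Theorem~\ref{rattr} (and of Proposition~\ref{rmaincusp} in the parabolic case $\rho=1$) applied to the single orbit $O$, so $r=1$. A degree two rational map $f$ with $\infty$ a non-superattracting fixed point has two simple critical points whose images are both finite, giving $\bar p'=(1,1)$ and $p=p'=2$; the corollary's setup assumes the two critical values are distinct, so $q=q'=2$, and the assumption $v_2\neq 0$ allows the choice $k=2$ in each invocation of the theorem.

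Before applying the theorem I would verify that its hypotheses are met: $O$ is non-repelling by assumption; the side condition when $\rho=0$ is automatic, since every critical point of a quadratic map is simple, so a vanishing multiplier forces $O$ to contain exactly one critical point; and the $\rho=1$ non-degeneracy is explicit in the corollary. I would also argue that the normalization at $\infty$ falls into exactly one of the three sub-cases $(H_\infty)$, $(H_\infty^{attr})$, or $(NN_\infty)$: the remaining possibility $\sigma=1$, $m=0$ corresponds (in degree two) to $\infty$ being a triple parabolic fixed point with two attracting petals, and a Fatou--Shishikura count then pins both critical points inside those petals, leaving no non-repelling orbit outside $\infty$; the analogous obstruction when $\infty$ has one petal is the $z^2+1/4$ case the corollary itself excludes.

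With that in hand both assertions read off immediately. In the case $\sigma\neq 1$, $m=0$, case $(H_\infty)$ of Theorem~\ref{rattr} with $k=2$ produces the row $(\partial\rho/\partial\sigma,\partial\rho/\partial v_1)$ of rank one, hence nonzero. Under the additional hypothesis $|\sigma|\geq 1$ (still with $\sigma\neq 1$), case $(H_\infty^{attr})$ at $k=2$ upgrades this to $\partial\rho/\partial v_1\neq 0$ directly. In the case $\sigma=1$, $m\neq 0$, case $(NN_\infty)$ at $k=2$ again yields $\partial\rho/\partial v_1\neq 0$, which a fortiori makes the full two-component vector nonzero, so here no separate argument is needed for the strengthened conclusion. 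The statements involving $\hat\rho$ are obtained by the identical three appeals with $\hat\partial$-derivatives from Proposition~\ref{rmaincusp} in place of the ordinary partials; the cusp-analogue is already folded into Theorem~\ref{rattr} through its $\tilde\partial$ notation. I expect no real obstacle beyond this bookkeeping: identifying which subcase at $\infty$ is operative and checking that each configuration ruled out by the corollary's hypotheses coincides with the degenerate case $(ND_\infty)$ in which Theorem~\ref{rattr} with $r=1$ and $q=2$ gives an empty matrix and no useful conclusion.
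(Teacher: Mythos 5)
Your core argument is correct and follows exactly the route the paper intends: identify $p=p'=q=q'=2$ (indeed, with $\infty$ a non-superattracting simple fixed point of a quadratic map, the unique finite pole is simple, so both critical values are finite), apply Theorem~\ref{rattr} with $r=1$ and $k=2$ (legitimate since $V_2=v_2\ne 0$), and read off rank $1$ in each of the three cases $(H_\infty)$, $(H_\infty^{attr})$, $(NN_\infty)$. Your exclusion of $(ND_\infty)$ is also right: $\sigma=1$, $m=0$ forces a triple fixed point at $\infty$ with two attracting petals, which by Fatou--Shishikura uses up both critical points and leaves no room for $O$.

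One small inaccuracy in a side remark, though: the $z^2+1/4$ exclusion in the corollary is \emph{not} an obstruction ``analogous'' to $(ND_\infty)$ arising from a petal at $\infty$ after normalization. In the map $z^2+1/4$ the point $\infty$ is superattracting (not a one-petal parabolic), and if $O$ is the parabolic fixed point $\{1/2\}$ then the \emph{only} fixed point outside $O$ is $\infty$ itself, so there is no fixed point with nonzero multiplier to send to $\infty$ --- the Möbius normalization $f(z)=\sigma z+m+O(1/z)$, $\sigma\ne 0$, $O\ne\infty$ simply cannot be achieved. So that exclusion guards the existence of the normalization, not the case analysis within Theorem~\ref{rattr}. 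This does not affect the validity of your proof once the normalization is assumed.
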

\section{Theorem~\ref{rmain}: an outline of the proof}\label{rthm}
First, we prove Theorem~\ref{rmain} for maps $f$ in the space $S_d$, that is,
assuming that every critical
point of $f$ is simple and every critical values is finite. 
It will occupy most of the rest of the paper.
Then we prove Theorem~\ref{rmain} for multiple critical points
still assuming that every critical value is finite.
For this, we use a kind of a limit procedure, see Sect.~\ref{multiple}.
Finally, to complete the 
proof of Theorem~\ref{rmain}, we send
some of the critical values to $\infty$, see Sect.~\ref{infcrv}.
So, assume (until Sect.~\ref{multiple}) that $f\in S_d$. 
Since $\rho$ is a holomorphic function in $\bar v$,
it is enough to prove the
formulae of Theorem~\ref{rmain} for $\rho\not=0$.
\paragraph{The identity.} We will use the
same identity of Theorem~\ref{rformula}.
\paragraph{Reduction to the hyperbolic case.}
Here we show that in order to prove Thoerem~\ref{rmain} for any $f\in S_d$,
it is enough to prove it only
for those $f$ from $S_d$ that satisfy the following
conditions:

(1) $f$ is a hyperbolic map,  and $\infty$ is an attracting fixed
point, i.e., $|\sigma|>1$,

(2) $f$ has no critical relations,

(3) $O$ is an attracting periodic orbit of $f$.

Indeed, assume that Theorem~\ref{rmain} holds for this open subset
of maps from $S_d$. Given now any $f\in S_d$ as in
Theorem~\ref{rmain}, we find a real analytic path $g_t$, $t\in
[0,1]$, in $S_d$, which has the following
properties:
(i) $g_0=f$,
(ii) $g_1$ satisfies conditions (1)-(3),
(iii) the analytic continuation $O_t$ (a periodic orbit of $g_t$)
of the periodic orbit $O$ along the path is well-defined (i.e. the
multiplier of $O_t$ is not $1$ for $t\in [0,1]$), and $O_1$ (the
periodic orbit of $g_1$) is attracting.

Denote by $\Delta(z, \overline f)$ the difference between the left
and the right hand sides of~(\ref{rruelle}). It is an analytic
function in $\overline f$ in a neighborhood of every point
$\bar{g_t}$, $t\in [0,1]$. On the other hand, by the assumption,
it is identically zero in a neighborhood of $\bar{g_1}$. By the
Uniqueness Theorem for analytic functions, $\Delta(z, \bar f)=0$.

Let us show that the path $g_t$ as above exists.
We first connect $f$ to the map $p_0: z\mapsto z^d$
through a path $\gamma_0$ of the form $[f, p_0]$ (see Subsect.~\ref{rss1}), 
so that the
analytic continuation of the periodic orbit $O$ of $f$ along this
path exists, and $O$ turns into a periodic orbit $Q$ of $p_0$.
Then we proceed by a real analytic path $c_Q$ in the parameter
plane of $p_c(z)=z^d+c$ that turns $Q$ into an attracting periodic orbit of
some $p_c$. Finally, we find the desired path $g_t$ in $S_d$
in a neighborhood of $c_Q\circ \gamma_0$.
\paragraph{Hyperbolic maps}
Here we describe how to prove Theorem~\ref{rmain} for the
maps {\it $f\in S_d$ that satisfy the conditions (1)-(3) of the previous
paragraph.}
Similar to the polynomial case, let $\nu(z, t)$ be an
analytic family of invariant Beltrami coefficients in the
Riemann sphere, and $\nu(z,0)=0$. (We do not assume that $\nu(z,t)=0$
for $z$ near $\infty$ though.)
In turn, let
$h_t$ be an analytic family of quasiconformal homeomorphism in the
plane that fix $\infty$, so that $h_t$ has the complex dilatation
$\nu(z,t)$, and $h_0=id$. Then $f_t=h_t\circ f\circ h_t^{-1}$
is an analytic family
of rational functions. Moreover, $f_t\in
S_d$, and $O_t=h_t(O)$ the corresponding
attracting periodic orbit of $f_t$. Denote by $\rho(t)$ its
multiplier, and by $v_j(t)=h_t(v_j)$ the set of finite critical
values of $f_t$. Besides,
$f_t(z)=\sigma(t)z+m(t)z+O(\frac{1}{z})$.
Note that the functions $\sigma(t)$, $m(t)$, $\rho(t)$, and
$v_j(t)$ are analytic in $t$, and $\sigma(0)=\sigma$, $m(0)=m$,
$\rho(0)=\rho$, $v_j(0)=v_j$.
Starting with Theorem~\ref{rformula}, we derive:
\begin{theo}\label{rteich}
For $f\in S_d$,
\begin{equation}\label{rteichform}
\frac{\rho'(0)}{\rho}=\Gamma_2\frac{\sigma'(0)}{\sigma}+
\frac{\Gamma_1}{\sigma}m'(0)+\sum_{j=1}^{2d-2} L_j v_j'(0),
\end{equation}
where $\Gamma_1$ and $\Gamma_2$ are defined by the expansion at infinity:
$$A(z)=\frac{\Gamma_1}{z}+\frac{\Gamma_2}{z^2}+O(\frac{1}{z^3}).$$
\end{theo}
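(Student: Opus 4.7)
The strategy parallels the proof of the polynomial version, Theorem~\ref{teich}, but with the essential new feature that the invariant Beltrami form $\mu$ no longer vanishes near the fixed point $\infty$, which in the hyperbolic case is attracting with multiplier $1/\sigma$ (so $|\sigma|>1$) and carries the additional translation datum $m$. I would apply the identity from Theorem~\ref{rformula}, namely $A(z)-(TA)(z)=\sum_{j=1}^{2d-2}L_j/(z-v_j)$ (valid since $f\in S_d$ has simple critical points and only finite critical values), integrate both sides against $\mu(z)\,d\sigma_z$ over the domain $V_{r,R}=\hat{\Ci}\setminus(N_r\cup B^*(R))$, where $N_r$ is the preimage-compatible family of small disks around $O$ chosen exactly as in Sect.~\ref{th4}, and $B^*(R)$ is a neighborhood of $\infty$. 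Using the $f$-invariance $\int_V\mu T\psi\,d\sigma=\int_{f^{-1}V}\mu\psi\,d\sigma$, the left side becomes $\int_{V_{r,R}\setminus f^{-1}V_{r,R}}\mu A\,d\sigma$.

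The set $V_{r,R}\setminus f^{-1}V_{r,R}$ decomposes into four pieces: (i) a fundamental region $C_r$ for $f^n$ near $b_1\in O$; (ii) a fundamental annulus $\Omega_R$ for $f$ near the attracting fixed point $\infty$, essentially $R/|\sigma|<|z|\le R$; (iii) small disks about the $d-1$ finite poles of $f$, of radius $O(1/R)$; and (iv) an error set $\Delta_r$ away from $O$ and $\infty$. Pieces (iii) and (iv) contribute $o_r(1)+o_R(1)$ because $\mu A$ is bounded there while the Euclidean area shrinks. The contribution of $C_r$ gives $-\pi\rho'(0)/\rho$ by Lemma~\ref{linspeed} and the local expansion $A(z)\sim 1/(z-b_1)^2$ near $b_1$, exactly as in the polynomial proof.

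The main new step is the analysis of $\int_{\Omega_R}\mu A\,d\sigma$. Substituting the Laurent expansion $A(z)=\Gamma_1/z+\Gamma_2/z^2+O(1/z^3)$ isolates three pieces, the last contributing $O(1/R)$. An analogue of Lemma~\ref{linspeed}, obtained by transporting its proof through the chart $u=1/z$ and applying it at the fixed point $\infty$ with multiplier $1/\sigma$, yields
\[
\int_{\Omega_R}\frac{\mu(z)}{z^2}\,d\sigma_z=\pi\,\frac{\sigma'(0)}{\sigma}
\]
(independent of the choice of annulus, by the same invariance-of-fundamental-region argument). The $\Gamma_1/z$ piece requires a ``second-order'' companion identity relating $\int_{\Omega_R}\mu(z)/z\,d\sigma_z$ to $m'(0)/\sigma$; this I would derive by writing $h_t=a(t)\psi_t+b(t)$ with $\psi_t$ the canonical qc map fixing $0,1,\infty$ and $\kappa=\dot\psi_0$ given by Ahlfors' formula~(\ref{ahl}), then matching the constant Laurent coefficient at $\infty$ in the infinitesimal cocycle $\dot f_0(z)=\dot h_0(f(z))-f'(z)\dot h_0(z)$ and using the transformation law $m\mapsto b(1-\sigma)+am$ under conjugation by $az+b$ to absorb the affine parameters $a'(0),b'(0)$.

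For the right-hand side I would decompose $v_j'(0)=a'(0)v_j+b'(0)+\kappa(v_j)$ via the Ahlfors formula and substitute. Because $\mu$ is no longer compactly supported, neither the individual integrals $\int_{V_{r,R}}\mu/(z-v_j)\,d\sigma$ nor the piece $\Gamma_1\int_{\Omega_R}\mu/z\,d\sigma$ admits a limit as $R\to\infty$; however their divergences cancel through the identity itself, since matching the $1/z$-coefficients on both sides of $A-TA=\sum L_j/(z-v_j)$ at $\infty$ gives $\sum L_j=\Gamma_1(\sigma-1)/\sigma$. After subtracting these matching divergences and collecting finite parts, one obtains the claimed formula $\rho'(0)/\rho=\Gamma_2\sigma'(0)/\sigma+(\Gamma_1/\sigma)m'(0)+\sum_j L_j v_j'(0)$. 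The main obstacle is precisely this careful bookkeeping at $\infty$: establishing the $m'(0)$-formula and verifying that the affine normalization parameters $a'(0),b'(0)$ cancel out of the final identity, playing here the role that the rigid normalization $h_t(z)=z+O(1/z)$ played in the polynomial case.
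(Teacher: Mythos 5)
Your proposal is correct and follows essentially the same route as the paper: integrating $A-TA=\sum L_j/(z-v_j)$ against $\mu$ over $V_{r,R}$, extracting $\rho'(0)/\rho$ and $\sigma'(0)/\sigma$ from the two fundamental regions (the latter being the paper's Lemma~\ref{rlinspeed} obtained by transporting through $u=1/z$), expressing $v_j'(0)$ via Ahlfors' formula, matching Laurent coefficients at $\infty$ to get the cancellation relations (Lemma~\ref{rconn}), and reducing the remaining bookkeeping to the $m'(0)$ computation via the infinitesimal cocycle (Lemma~\ref{m'}). The only imprecision is the suggestion that $\int_{\Omega_R}\mu/z$ alone relates to $m'(0)/\sigma$; in fact $m'(0)$ equals the full combination $\Delta_0$ that also involves integrals over $V_R$ and the affine terms $a'(0),b'(0)$, but your subsequent plan to extract it from the cocycle identity is exactly how the paper handles this.
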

In the course of the proof we calculate $\sigma'(0)$ and $m'(0)$.
\paragraph{Concluding argument.}
We are going to compare~(\ref{rteichform}) to the following
obvious identity:
\begin{equation}\label{rteichformal}
\rho'(0)=\frac{\partial\rho}{\partial\sigma}\sigma'(0)+
\frac{\partial\rho}{\partial m}m'(0)+\sum_{j=1}^{2d-2}
\frac{\partial\rho}{\partial v_j}v_j'(0).
\end{equation}

The proof will be finished once we will show that the vector
$$\{\sigma'(0), m'(0), v_1'(0),...,v_{2d-2}'(0)\}$$ 
can be taken arbitrary in ${\bf C}^{2d}$.
To this end,
for every vector $\bar v'=\{\sigma', m', v_1',...,v_{2d-2}'\}\in
{\Ci}^{2d}$ of initial conditions there exists an analytic family
$f_t$ of rational maps from $S_d$ with the critical values
$v_1(t),...,v_{2d-2}(t)$, such that
$f_t(z)=\sigma(t)z+m(t)+O(\frac{1}{z})$,
and $\sigma'(0)=\sigma'$, $m'(0)=m'$, $v_j'(0)=v_j'$, for $1\le
j\le 2d-2$. Indeed, this is an immediate consequence of
Proposition~\ref{rlocal} for $f\in S_d$, 
where one can simply take locally $\bar
v(t)=\bar v+t \bar v'$, and find by the inverse holomorphic
correspondence $\bar v\mapsto \bar f$ the corresponding local
family $f_t$, such that $f_0=f$. Since $f$ is hyperbolic and has
no critical relations, the following fundamental facts hold: every
nearby map $f_t$ is conjugate to $f$ by a quasiconformal
homeomorphism $h_t$, and $h_t$ can be chosen to be analytic in
$t$. Furthermore, the complex dilatations of $h_t$ form a family
$\nu(z,t)$ as described above. For $f\in
S_d$ and without critical relations, 
this is an immediate corollary of ~\cite{mcmsul},
Theorem 7.4, and~\cite{BR}, Theorem 3.
This shows that the vector $\{\sigma'(0), m'(0),
v_1'(0),...,v_{2d-2}'(0)\}$ can be chosen arbitrary, 
and, hence, proves that Theorem~\ref{rteich}
implies Theorem~\ref{rmain}.
\section{Proof of Proposition~\ref{rlocal} }\label{rpr1}
We present two proofs of this basic fact.
The first proof uses general properties of analytic sets,
and it is very similar to the proof
of Proposition~\ref{local}.
The second one is a direct and nice
construction of the (local) inverse map $\delta^{-1}$
with help of quasiconformal surgery. 
We use an idea by Eremenko and follow essentially~\cite{er},
where it is done for polynomials and for a single critical value.
It gives an alternative proof of Proposition~\ref{local} as well. 

Both proofs start as follows.
Denote $\Lambda=\Lambda_{d, \bar p'}$. 
The map $f$ has a critical point $c$ of multiplicity $m\ge 1$ 
with a finite critical value $v=f(c)$ if and only if
$c$ satisfies the following conditions:
$f'(c)=0,...,f^{(m)}(c)=0, \ \ \ f^{(m+1)}(c)\not=0$.
From the latter two conditions, one can express $c$ as
a local holomorphic function $c=\phi_m(\bar f)$ of the vector
of the coefficients
$\bar f\in {\bf C}^{2d}$. This determines
$m-1$ algebraic equations $\Psi_{k,m}(\bar f)=0$,
where $\Psi_{k,m}(\bar f)=f^{(k)}(\phi_m(\bar f))$, $k=1,...,m-1$.
If the critical value $v$ is infinite, the conclusion
is the same (considering $1/f$), and we will use similar notations
in this case as well.
Denote $\bar \Psi(\bar f)=
\{\Psi_{k,m_j}(\bar f)=0\}_{j=1,k=1}^{j=p',k=m_j}$.
Thus the analytic set $\Lambda$ in ${\bf C}^{2d}$
is determined by the following $2d-2-p'$ equations
of the vector $\bar f\in {\bf C}^{2d}$:
$\bar \Psi_{k,m_j}(\bar f)=0$.

Secondly, we have the map $\delta: \Lambda\to {\bf C}^{p'+2}$
defined by $\delta(\bar f)=\bar v$. It can be represented
as the restriction on $\Lambda$
of the following map (denoted by $\tilde\delta$),
which is (locally) holomorphic in $\bar f\in {\bf C}^{2d}$:
$$\tilde\delta(\bar f)=\{\sigma(\bar f), m(\bar f), f(\phi_{m_1}(\bar f)),...,
f(\phi_{m_p}(\bar f)), 1/f(\phi_{m_{p+1}}(\bar f)),...,
1/f(\phi_{m_{p'}}(\bar f))\}.$$
As $\delta:\Lambda\to {\bf C}^{p'+2}$ has a holomorphic extension 
$\tilde\delta$,
it is enough to prove the following claim: 
the map $\delta:\Lambda\to {\bf C}^{p'+2}$
maps a neighborhood in $\Lambda$ of every $\bar f_0\in \Lambda$
onto a neighborhood in ${\bf C}^{p'+2}$ of the point $\delta(\bar f_0)$ and
has a local holomorphic inverse $\delta^{-1}$.
We present two proofs of this claim.
\subsection{First proof}\label{first}
It is similar to the proof of Proposition~\ref{local}, see Sect.~\ref{pr1}.
The following lemma is crucial:
\begin{lem}\label{rinj}
The map $\delta:\Lambda\to {\bf C}^{p'+2}$
is injective in a neighborhood of every $\bar f_0\in \Lambda$.
\end{lem}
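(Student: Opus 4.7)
The plan is to imitate the injectivity argument for polynomials given in Sect.~\ref{pr1}, making the modifications forced by the fact that now $\infty$ is merely a non-superattracting fixed point of the form $f(z)=\sigma z+m+O(1/z)$, and that some critical values may lie at $\infty$. Fix $f_0\in\Lambda$ and suppose $f_1,f_2\in\Lambda$ are both close to $f_0$ with $\delta(\bar f_1)=\delta(\bar f_2)$. The strategy is to build a holomorphic self-map $g$ of $\hat{\Ci}$ satisfying $f_1=f_2\circ g$ together with $g(z)=z+O(1/z)$ at $\infty$, and then conclude that $g=\mathrm{id}$, whence $f_1=f_2$.

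First I would cover $\hat{\Ci}$ by open balls $B_1,\dots,B_m$ centered at points $a_1,\dots,a_m$, arranged so that $a_1,\dots,a_{q'}$ are the pairwise distinct critical values of $f_0$ in the spherical sense (including $\infty$ among them if $f_0$ has an infinite critical value), $B_m$ is a small neighborhood of $\infty$ (which may coincide with one of the first $q'$ balls if $\infty$ is itself a critical value), and each component of $f_0^{-1}(B_k)$ contains at most one critical point. Properties (1f), (2f) and the continuation lemma (3f) of Sect.~\ref{pr1} are purely local assertions about branched coverings and carry over verbatim. Because $\sigma(f_1)=\sigma(f_2)$ and $m(f_1)=m(f_2)$, there is a unique branch $F_\infty$ of $f_2^{-1}$ on $B_m$ with expansion $F_\infty(w)=(w-m)/\sigma+O(1/w)$ at $\infty$, and hence $g:=F_\infty\circ f_1$ satisfies $g(z)=z+O(1/z)$ at $\infty$.

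Next, invoking (3f), I would extend $g$ by analytic continuation across every chain of components of $f_1^{-1}(B_k)$ that covers the sphere, selecting on each ball $B_k$ the branch of $f_2^{-1}$ for which the continuation stays close to the identity. At a critical point $c_j(f_1)$, the hypothesis $\delta(\bar f_1)=\delta(\bar f_2)$ gives $v_j(f_1)=v_j(f_2)$ (or $1/v_j(f_1)=1/v_j(f_2)$ when $v_j(f_0)=\infty$), and combined with the common multiplicity $m_j$ this forces the branched local models of $f_1$ and $f_2$ at corresponding critical points to match. The composition $F\circ f_1$ therefore extends as a biholomorphism of neighborhoods of $c_j(f_1)$ and $c_j(f_2)$. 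The resulting $g:\hat{\Ci}\to\hat{\Ci}$ is holomorphic and locally injective everywhere, hence a Mobius transformation; the normalization $g(z)=z+O(1/z)$ at $\infty$ then forces $g=\mathrm{id}$, and $f_1=f_2\circ g=f_2$.

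The main obstacle will be the careful treatment of infinite critical values: near such a critical point $c_j(f_2)$ the branch $F$ of $f_2^{-1}$ is not directly defined as a map to a neighborhood of $\infty\in\hat{\Ci}$, and one has to work in the chart $w\mapsto 1/w$, viewing $1/f_2$ rather than $f_2$ near $c_j(f_2)$. This is exactly the situation for which $\delta$ records the reciprocal $1/v_j$ rather than $v_j$, so the matching $1/v_j(f_1)=1/v_j(f_2)$ supplies precisely the data needed for the holomorphic continuation of $g$ through these critical points. A secondary point to check is that the "center" $w_U$ of each component of $f_1^{-1}(B_k)$ is mapped by $g$ to the center of the corresponding component of $f_2^{-1}(B_k)$; this follows from the choice of $F_\infty$ together with the uniqueness built into (3f) once the initial condition at $\infty$ is fixed.
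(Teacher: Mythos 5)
Your proposal is correct and takes essentially the same route the paper intends: the paper explicitly says the proof of Lemma~\ref{rinj} is ``almost identical'' to the injectivity argument for $\pi$ in Proposition~\ref{local}, and your proof reproduces that argument with the right modifications for the rational setting (using $\sigma,m$ to normalize the initial branch $F_\infty$ near the non-critical fixed point at $\infty$, passing to the chart $1/w$ near infinite critical values, and concluding from $f_2\circ g=f_1$ on all of $\hat{\Ci}$ that $g$ is M\"obius and then the identity).
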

The proof of this Lemma is almost identical to the proof of the injectivity
of the map $\pi$ in
Proposition~\ref{local}, so we omit it.

Next, we use the following well-known statement about analytic sets.
Its particular case (for $r=l$) was used to prove Proposition~\ref{local}. 
\begin{prop}\label{verygeneral}
Let $U$ be a ball in $\C^l$,
and let $E$ be an analytic set in $U$, which is defined
as the set of common zeros of $l-r$ holomorphic functions in $U$, 
for some $0<r\le l$. Assume $g: U\to \C^r$ is a holomorphic
map, which is injective on $E$.
Then $g(E)$ is an open set in $\C^r$ and $g: E\to \C^r$ has a holomorphic inverse
on this set.
\end{prop}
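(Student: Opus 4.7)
My plan is to combine three classical ingredients: Krull's height theorem for analytic sets, Remmert's proper mapping theorem, and Riemann's extension (removable singularity) theorem. The output will be that $g|_E$ is locally a homeomorphism onto an open set of $\mathbb{C}^r$ with holomorphic inverse.

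First I would pin down the dimension of $E$. By Krull's Hauptidealsatz applied to the $l-r$ defining holomorphic equations of $E$ inside the ball $U\subset\mathbb{C}^l$, every irreducible component of the germ of $E$ at any $z_0\in E$ has dimension at least $r$. On the other hand, since $g|_E$ is holomorphic and injective into $\mathbb{C}^r$, each such component $E_\alpha$ satisfies $\dim E_\alpha\le\dim g(E_\alpha)\le r$. Hence $E$ has pure dimension $r$ at every point.

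Next I would show that $g(E)$ is open. Fix $z_0\in E$. Because $g|_E$ is injective, $g^{-1}(g(z_0))\cap E=\{z_0\}$, so $g$ restricted to $E$ is proper over a small neighbourhood of $g(z_0)$. By Remmert's proper mapping theorem, the image $g(E)$ is analytic near $g(z_0)$, and by the previous step it has dimension $r$; an $r$-dimensional analytic subset of $\mathbb{C}^r$ fills a neighbourhood of $g(z_0)$, so $g(E)$ contains such a neighbourhood.

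Now I turn to the holomorphicity of the inverse $h:=(g|_E)^{-1}:g(E)\to\mathbb{C}^l$. Let $S=\mathrm{Sing}(E)$, an analytic subset of $E$ of dimension $<r$; by the local properness of $g|_E$ and Remmert's theorem, $g(S)$ is analytic in $g(E)$ and of dimension $<r$, in particular nowhere dense. On the smooth locus $E\setminus S$, $g$ restricts to an injective holomorphic map between complex manifolds of equal dimension $r$; by the classical theorem that such a map has nonvanishing Jacobian (hence is biholomorphic onto its image), each coordinate of $h$ is holomorphic on $g(E)\setminus g(S)$. Properness plus injectivity of $g|_E$ makes $h$ a homeomorphism, so the coordinates of $h$ are in particular bounded near every point of $g(E)$. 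Riemann's extension theorem across the thin analytic set $g(S)$ then upgrades each component of $h$ to a holomorphic function on all of $g(E)$, which is exactly the holomorphic inverse required.

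\emph{Main obstacle.} The delicate step is the last one: handling the possible singular locus of $E$. A naive approach via the local parametrization (Noether normalization) theorem would present $E$ as a branched cover over a generic $\mathbb{C}^r$-coordinate subspace, but there is no reason for such a generic projection to coincide with $g$, so the degree of that cover is not forced to be $1$ by the injectivity of $g$. The argument above sidesteps this by working on $E\setminus S$ (where the inverse function theorem applies verbatim) and then invoking Riemann extension across $g(S)$, which is thin precisely because $E$ has pure dimension $r$.
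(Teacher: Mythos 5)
Your proposal is correct and follows essentially the same strategy as the paper's proof: a dimension bound from the $l-r$ defining equations (you invoke Krull's Hauptidealsatz, the paper cites the corresponding statement in Whitney), a local finite/proper mapping theorem to see that $g(E)$ is analytic of dimension $r$ and hence open (you cite Remmert, the paper cites Whitney Ch.~4 Thm~6B), analyticity of the inverse on the image of the regular locus, and Riemann's removable singularity theorem across the thin image of the singular locus. The explicit pure-dimension claim and the nonvanishing-Jacobian lemma on $E\setminus\mathrm{Sing}(E)$ are harmless elaborations of what the paper leaves implicit.
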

Proposition~\ref{rlocal} follows immediately from Lemma~\ref{rinj}
and Proposition~\ref{verygeneral}, if we set $l=2d$, $r=p'+2$,
$g=\tilde\delta$, $U$ to be a ball around $\bar f_0\in \Lambda$, 
where $\delta$ is injective on $\Lambda$,
and $E=\Lambda\cap U$.

It remains to prove the above Proposition~\ref{verygeneral}.
Consider the restriction $g|_E$ of the holomorphic map $g: U\to \C^r$
on $E$. Since $g$ is injective on $E$, every point $z_0\in E$ is obviously
an isolated point in the set $g|_E^{-1}(g(z_0))$. Therefore (see e.g.~\cite{whit},
Chapter 4, Theorem 6B), for some neighborhood $W$ of $z_0$, the set
$F:=g(E\cap W)$ is analytic, and the dimension of $F$ at the point $g(z_0)$
is equal to the dimension of $E$ at $z_0$. 
On the other hand, the dimension of $E$ at each point is at least $r$
because $E$ is defined in $\C^l$ by $l-r$ equations
(\cite{whit}, Chapter 2, Theorem 12G).
Hence, as the analytic set $F$ lies in $\C^r$ and its dimension is at least $r$,
it is equal to $r$ and $F$ is a neighborhood of $g(z_0)$ in $\C^r$. 
Thus $g(E)$ is open in $\C^r$.
Now, the map $g|_E^{-1}$
is well-defined on this open set, and it is analytic
in a neighborhood of the image of every regular point of $E$.
On the rest of the points, which form an analytic set
of smaller dimension, $g|_E^{-1}$ is locally bounded. By the extended
Riemann removable singularity theorem, 
$g|_E^{-1}$ is holomorphic everywhere.
\subsection{Second proof}\label{erpr}
Let $f_0\in \Lambda_{d, \bar p'}$ and 
$\bar v(f_0)=\delta(\bar f_0)=\{\sigma_0,m_0,v_1^0,...,v_p^0,0,...,0\}$.
We prove the existence of a local holomorphic inverse $\delta^{-1}$
by constructing a rational function $f\in \Lambda_{d, \bar p'}$
with a prescribed $\bar v=\bar v(f)$ so that $\bar v$ 
is close to $\bar v(f_0)$
and $\bar f$ depends holomorphically on $\bar v$.
To this end, choose small (in the spherical metric) pairwise 
disjoint disks $B_k$, $k=1,...,q_0$, centered at the critical
values of $f_0$. Let $D_j$, $j=1,...,p'$, be the components
of $f_0$-preimages of all $B_k$ on which $f_0$ is not one-to-one. 
Each $D_j$ is small
(in the Euclidean metric) and contains one and only one
critical point $c^0_j$ of $f_0$. Given any vector
$\bar v=\{\sigma, m, v_1,...,v_p,v^*_{p+1},...,v^*_{p'}\}$
close to $\bar v(f_0)$, and given $1\le j\le p'$,
one can choose a diffeomorphism $\phi_j$
of the Riemann sphere, which satisfies the following
conditions: 
(1) $\phi_j(z)$ depends on $z$ and $v_j$ only,
and $\phi_j$ is the identity outside of the ball $B_{k(j)}=f_0(D_j)$,
(2) $\phi_j(v^0_j)=v_j$, if $j=1,...,p$, and
$\phi_j(\infty)=1/v^*_j$, if $j=p+1,...,p'$, and (3)
$\phi_j$ depends holomorphically on $v_j$, if
$j=1,...,p$, and on $v^*_j$, if $j=p+1,...,p'$.
Such $\phi_j$ can be constructed, for example, as in~\cite{er}.
First, for a disk $B=B(a,r)$, 
set $\chi_B(z)$ to be $0$, if $z\notin B$,
and $\chi_B(z)=(1-|z-a|^2/r^2)^2$, if $z\in B$. Define
$\phi_{B, b}(z)=z+(b-a)\chi_B(z)$. If $|b-a|$ is small enough, then
$\phi_{B, b}$ is a diffemorphism of $\C$, such that $\phi_{B, b}(a)=b$.
For a disk $B=B^*(R)$ around $\infty$, we denote $B_0=B(0, 1/R)$
and set
$\phi_{B, b}=J\circ \phi_{B_0, 1/b}\circ J^{-1}$, where $J(z)=1/z$.
Now we can define: $\phi_j=\phi_{B_k(j), v_j}$ for $1\le j\le p$,
and $\phi_j=\phi_{B_\infty, 1/v_j^*}$ for $p+1\le j\le p'$,
where $B_\infty$ is the disk centered at $\infty$, which is among $B_k$
(provided $p<p'$).

Now, define a new function $f^*$, such that
$f^*(z)=f_0(z)$ outside of all $D_j$ and
$f^*(z)=\phi_j(f_0(z))$ if $z\in D_j$. Note that $f^*=f_0$ in a definite
neighborhood of $\infty$.
Also, $f^*(z)$ depends holomorphically on $\bar v$ for every $z$,
and $f^*\to f_0$, as $\bar v\to \bar v(f_0)$, uniformly on the Riemann sphere.
The map $f^*$ is a degree $d$ smooth map of the Riemann sphere
with the critical values at $v_1,...,v_p,1/v^*_{p+1},...,1/v^*_{p'}$
and with the same expansion at $\infty$: $f^*(z)=\sigma_0 z+m_0+O(1/z)$.
Let 
$\mu=\frac{\partial f^*}{\partial \bar z}/\frac{\partial f^*}{\partial z}$. 
As $||\mu||_\infty<1$, there exists
a quasiconformal homeomorphism of the sphere $\psi$, such that
the complex dilatation of $\psi$ is $\mu$, in particular, it is 
holomoprhic near infinity, and normalized
by $\psi(z)=\tilde a z + \tilde b + O(1/z)$ with any prescribed 
$\tilde a\not=0, \tilde b$.
To be more precise, if $\psi^\mu(z)$ is the normalized quasiconformal
map as in the beginning of Sect.~\ref{th4}, 
then $\psi^\mu(z)=\rho z+k+O(1/z)$
at $\infty$, and we define $\psi=a \psi^\mu + b$ with 
$a=\sigma_0/(\sigma \rho)$ and $b=(m_0-m)/\sigma - \sigma_0 k/(\sigma \rho)$.
For every $z$, $\psi^\mu(z)$ is holomorphic in $\bar v$, and,
it follows, that $\rho$, $k$ depend
holomorphically on $\bar v$, too. Therefore, $\psi(z)$ is also holomorphic in
$\bar v$.
Finally, define $f=f^*\circ \psi^{-1}$.
Then $f$ is rational. Moreover, $a,b$ are chosen so that 
$f(z)=\sigma z + m +O(1/z)$. It is easy to see that $f(z)$ 
is a continuous function of $\bar v$, and $f=f_0$ for $\bar v=\bar v(f_0)$.
Furthermore, 
$f(z)$ depends holomorphically on each variable
$\sigma, m, v_1,...$, for every $z$.
One can check this as in~\cite{er}: we differentiate the
identity $f\circ \psi=f^*$ by $\bar\partial\sigma,
\bar\partial m, \bar\partial v_1,...$ 
and take into account that $\psi$, $f^*$ are holomorphic in $\bar v$. 
Thus $f\in \Lambda_{d, \bar p'}$, $\bar v(f)=\bar v$,
and $f(z)$ depends on $\bar v$ holomorphically for every $z$.
Since $f_0(z)=\sigma_0 z+m_0+P_0(z)/Q_0(z)$, where 
the polynomials $P_0$ and $Q_0$ have no common roots,
this implies that the 
vector $\bar f$ of the coefficients of $f$ depends holomorphically
on $\bar v$ as well.
It defines a local holomorphic inverse $\delta^{-1}$.
By the above, we are done.
\section{Proof of Theorem~\ref{rteich}}
\subsection{Beltrami coefficients.}
As it has been mentioned already, we derive the theorem with help
of quasiconformal deformations. Let $f\in S_d$, and satisfy
the conditions (1)-(3) of Sect.~\ref{rthm}.
Let $\nu(z, t)$ be an
analytic family of invariant Beltrami
coefficients in the Riemann sphere, such that $\nu(z,0)=0$.
As $\nu(z,t)$ is differentiable at $t=0$, $\nu(z,t)=t\mu(z)+t\epsilon(z,t)$,
where $||\epsilon(z,t)||_\infty\to 0$ as $t\to 0$.
We have seen $\mu$ is invariant by $f$, too.
Let $h_t$ be an analytic family of quasiconformal
homeomorphisms in the plane that fix $\infty$, so that $h_t$ has
the complex dilatation $\nu(z,t)$, and $h_0=id$. 
Then $f_t=h_t\circ f\circ h_t^{-1}\in S_d$ is an analytic in $t$
family, and $O_t=h_t(O)$ the
corresponding attracting periodic orbit of $f_t$. Let $\rho(t)$ denote
its multiplier, and $v_j(t)=h_t(v_j)$ the 
critical values of $f_t$. Define also $\sigma(t)$, $m(t)$ by the expansion
$f_t(z)=\sigma(t)z+m(t)z+O(1/z)$ as $z\to \infty$.
\subsection{Speed of the multiplier.}
\begin{lem}\label{rlinspeed}
\begin{equation}\label{rlinspeedf}
\frac{\rho'(0)}{\rho}=-\frac{1}{\pi}\lim_{C\to \{b\}}\int_{C}
\frac{\mu(z)}{(z-b)^2}d\sigma_z,
\end{equation}
where $C$ is a fundamental region near $b\in O$. 
\begin{equation}\label{rlinspeedfinf}
\frac{\sigma'(0)}{\sigma}=\frac{1}{\pi}\lim_{C_\infty\to
\{\infty\}}\int_{C_\infty} \frac{\mu(z)}{z^2}d\sigma_z,
\end{equation}
where $C_\infty$ is a fundamental region near $\infty$.
\end{lem}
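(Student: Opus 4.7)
The plan is to treat the two formulas separately: the first by following the proof of Lemma~\ref{linspeed} verbatim, the second by reducing to the first via the coordinate change $J(z)=1/z$.

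For the formula at $b\in O$, the entire argument of Lemma~\ref{linspeed} is local around $b$. One linearizes $f^n$ by a univalent $K\colon\{|w|<r_0\}\to\C$ with $K(0)=b$ and $f^n\circ K=K\circ\rho$, pulls the family $\nu(z,t)$ back via $K$ to $\hat\nu(w,t)=(|K'|^2/(K')^2)\,\nu(K(w),t)$, extends $\hat\nu$ to $\C$ by the invariance $\hat\nu(w,t)=(|\rho|^2/\rho^2)\,\hat\nu(\rho w,t)$ under $w\mapsto\rho w$, solves the Beltrami equation with $\phi_t$ normalized at $0,1,\infty$, and computes $\rho'(0)/\rho$ by applying the Ahlfors--Bers formula~(\ref{ahl}) to $\phi_t$ followed by the telescoping identity that leads to~(\ref{linspeedff}). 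That derivation nowhere uses that $\nu$ vanishes anywhere on the Riemann sphere; so the same proof, now with $\nu$ possibly supported everywhere, yields~(\ref{rlinspeedf}) in the present rational setting.

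For the formula at $\infty$, I conjugate by $J(z)=1/z$. Set $\tilde f=J\circ f\circ J^{-1}$ and $\tilde h_t=J\circ h_t\circ J^{-1}$, so that $\tilde f_t=\tilde h_t\circ\tilde f\circ\tilde h_t^{-1}$ is again an analytic family of rational maps. Since $f(z)=\sigma z+m+O(1/z)$ at $\infty$, $\tilde f$ is holomorphic at $0$ with $\tilde f(0)=0$ and $\tilde f'(0)=1/\sigma$; by hypothesis $|\sigma|>1$, so $0$ is an attracting fixed point of $\tilde f$. A direct differentiation shows that the complex dilatation of $\tilde h_t$ is
\begin{equation*}
\tilde\nu(u,t)=\nu(1/u,t)\,\frac{u^2}{\bar u^2},
\end{equation*}
analytic in $t$ with tangent $\tilde\mu(u)=\mu(1/u)\,u^2/\bar u^2$, and $\tilde\mu$ is $\tilde f$-invariant because $\tilde h_t$ conjugates $\tilde f$ to a rational map. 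Applying the already-established first formula of the lemma to $\tilde f$ at its attracting fixed point $0$ with multiplier $1/\sigma(t)$ gives
\begin{equation*}
-\frac{\sigma'(0)}{\sigma}=\frac{(1/\sigma)'(0)}{1/\sigma}=-\frac{1}{\pi}\lim_{\tilde C\to\{0\}}\int_{\tilde C}\frac{\tilde\mu(u)}{u^2}\,d\sigma_u.
\end{equation*}
The change of variable $u=1/z$ gives $d\sigma_u=d\sigma_z/|z|^4$ and $\tilde\mu(u)/u^2=\mu(z)\bar z^2$, and the identity $\bar z^2/|z|^4=1/z^2$ converts the integrand to $\mu(z)/z^2\,d\sigma_z$; a fundamental region $\tilde C$ for $w\mapsto w/\sigma$ around $0$ corresponds under $J$ to a fundamental region $C_\infty$ for $z\mapsto\sigma z$ around $\infty$, and the two sign flips cancel, yielding~(\ref{rlinspeedfinf}).

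The only non-routine point, and hence the main place where care is needed, is the bookkeeping in the coordinate change: verifying that the pulled-back Beltrami form $\tilde\mu$ is again an $\tilde f$-invariant Beltrami coefficient in the usual sense, and checking that the sign flip $(1/\sigma)'/(1/\sigma)=-\sigma'(0)/\sigma$ is precisely what converts the minus sign in~(\ref{rlinspeedf}) into the plus sign in~(\ref{rlinspeedfinf}).
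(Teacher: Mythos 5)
Your proof is correct and takes the same route as the paper, whose own argument consists of two sentences: the first formula is Lemma~\ref{linspeed}, and the second follows from it by the change of variable $z\mapsto 1/z$. You have correctly filled in the details: the proof of Lemma~\ref{linspeed} is local to the orbit $O$ (it only uses $\nu$ restricted to a neighborhood of $O$, pulled back through the linearizer), so the hypothesis that $\mu$ vanishes near $\infty$ plays no role there; and your bookkeeping for the conjugation by $J(z)=1/z$ — the transformation rule for the complex dilatation, the multiplier $1/\sigma(t)$ of $\tilde f_t$ at $0$, the Jacobian $d\sigma_u=d\sigma_z/|z|^4$, the identity $\bar z^2/|z|^4=1/z^2$, and the cancellation of the sign from $(1/\sigma)'/(1/\sigma)=-\sigma'/\sigma$ against the sign in~(\ref{rlinspeedf}) — is all accurate.
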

The first equality is the content of Lemma~\ref{linspeed}.
The~(\ref{rlinspeedfinf}) can be obtained from~(\ref{rlinspeedf})
by the change of variable $z\mapsto 1/z$. 
\subsection{Adjoint identity.}\label{adjiden}
We want to integrate the identity
\begin{equation}\label{ragain}
A(z)-(TA)(z)= \sum_{j=1}^{2d-2}\frac{L_j}{z-v_j}.
\end{equation}
against the $f$-invariant Beltrami form $\mu$. One cannot do this
directly, because $A$ is not integrable at the points of the
periodic orbit $O$ as well as at $\infty$ (if $\mu\not=0$ near
$\infty$). To deal with this, for every small $r>0$ and
big $R$, consider the domain $V_{r,R}$ to be the plane $\Ci$ with
the following sets deleted: $f(B^*(R))$ and $B(b_1,r)$ union with
$f_{b_{n-k+1}}^{-k}(B(b_1,r))$, for $k=1,...,n-1$, where
$f_{b_{n-k+1}}^{-k}$ is a local branch of $f^{-k}$ taking $b_1\in
O$ to $b_{n-k+1}$. In other words,
$$V_{r,R}={\Ci}\setminus \{f(B^*(R))\cup B(b_1,r)\cup_{k=1}^{n-1}
f_{b_{n-k+1}}^{-k}(B(b_1,r))\}.$$ Then $A$ is integrable in
$V_{r,R}$, and, therefore,
$$\int_{V_{r,R}}TA(z)\mu(z)d\sigma_z=\int_{f^{-1}(V_{r,R})}
A(z)\mu(z).$$ Now, $f^{-1}(V_{r,R})=V_{r,R}\setminus(C_r\cup
C^*_R\cup \Delta_r\cup \Delta^*_R)$, where
$C_r=f_{b_1}^{-n}(B(b_1,r))\setminus B(b_1,r)$ is a fundamental
region near $b_1$, and $C^*_R=B^*(R)\setminus f(B^*(R))$ is a
fundamental region near infinity (defined by the local branches
$f_{b_1}^{-n}$ that fixes $b_1$ and $f_\infty^{-1}$ that fixes
$\infty$ resp.), and, in turn, $\Delta_r$ and $\Delta^*_R$ are
open set which are away from $O$ and $\infty$, and which shrink to
a finitely many points as $r\to 0$ and $R\to \infty$ resp.
Therefore,
\begin{equation}\label{rinta}
\int_{V_{r,R}}(A(z)-TA(z))\mu(z)d\sigma_z=
\int_{C_r}A(z)\mu(z)d\sigma_z+\int_{C^*_R}A(z)\mu(z)d\sigma_z+o_r(1)+o^{\infty}_R(1).
\end{equation}
Here and below little-o notation mean that
$o_r(1)\to 0$ as $r\to 0$ and $o^{\infty}_R(1)\to 0$ as $R\to
\infty$. It is easy to see that
$$\int_{C_r}A(z)\mu(z)d\sigma_z=\int_{C_r}\frac{\mu(z)}{(z-b_1)^2}d\sigma_z+o_r(1)$$
and
$$\int_{C^*_R}A(z)\mu(z)d\sigma_z=\Gamma_1\int_{C^*_R}\frac{\mu(z)}{z}+
\Gamma_2\int_{C^*_R}\frac{\mu(z)}{z^2}+o^{\infty}_R(1),$$ 
where $\Gamma_1$,
$\Gamma_2$ are defined by the expansion
$A(z)=\Gamma_1/z+\Gamma_2/z^2+O(1/z^3)$ at infinity. Thus,
\begin{equation}\label{rintaex}
\int_{V_{r,R}}(A(z)-TA(z))\mu(z)d\sigma_z=
\int_{C_r}\frac{\mu(z)}{(z-b_1)^2}d\sigma_z+
\Gamma_1\int_{C^*_R}\frac{\mu(z)}{z}d\sigma_z+
\Gamma_2\int_{C^*_R}\frac{\mu(z)}{z^2}d\sigma_z+ o_r(1)+o^{\infty}_R(1).
\end{equation}
The identity~(\ref{ragain}) then gives us:
\begin{equation}\label{rmua}
\int_{C_r}\frac{\mu(z)}{(z-b_1)^2}d\sigma_z+
\Gamma_1\int_{C^*_R}\frac{\mu(z)}{z}d\sigma_z+
\Gamma_2\int_{C^*_R}\frac{\mu(z)}{z^2}d\sigma_z+ o_r(1)+o^{\infty}_R(1)=
\sum_{j=1}^{2d-2}L_j\int_{V_{r,R}}\frac{\mu(z)}{z-v_j}d\sigma_z
\end{equation}
Lemma~\ref{rlinspeed} allows us to pass to the limit as $r\to 0$:
\begin{equation}\label{radjidentity}
-\pi\frac{\rho'(0)}{\rho}+
\Gamma_1\int_{C^*_R}\frac{\mu(z)}{z}d\sigma_z+
\Gamma_2\int_{C^*_R}\frac{\mu(z)}{z^2}d\sigma_z+o^{\infty}_R(1)=
\sum_{j=1}^{2d-2}L_j\int_{V_{R}}\frac{\mu(z)}{z-v_j}d\sigma_z,
\end{equation}
where
$$V_R={\Ci}\setminus f(B^*(R)).$$
By the same Lemma~\ref{rlinspeed}, in the equation~(\ref{radjidentity}) one
can write the asymptotics as $R\to \infty$. We get:
\begin{equation}\label{radjidentityrat}
-\pi\frac{\rho'(0)}{\rho}+\Gamma_2\pi\frac{\sigma'(0)}{\sigma}+
\Gamma_1\int_{C^*_R}\frac{\mu(z)}{z}d\sigma_z+o^{\infty}_R(1)=
\sum_{j=1}^{2d-2}L_j\int_{V_{R}}\frac{\mu(z)}{z-v_j}d\sigma_z.
\end{equation}
\paragraph{Speed of critical values.}
Now we want to express the integral of $\mu(z)/(z-v_j)$ via
$v_j'(0)$. 
The difference with the polynomial case is that
$\mu$ does not vanish at infinity anymore.
Let $\psi_t$ be the quasiconformal homeomorphism of
the plane with the complex dilatation $\nu(z,t)$, that fixes $0, 1$
and $\infty$. As $h_t$ has the same complex dilatation and
fixes $\infty$ too, we have: $h_t=a(t)\psi_t+b(t)$, where $a$,
$b$ are analytic in $t$, and $a(0)=1$,
$b(0)=0$. Using (\ref{h})-(\ref{h'}), we can write
\begin{equation}\label{rv'}
v_j'(0)=a'(0)v_j+b'(0)-\frac{1}{\pi}\int_{V_R}\frac{\mu(z)}{z-v_j}d\sigma_z
-\frac{1}{\pi}\int_{V_R}\mu(z)(\frac{v_j-1}{z}-\frac{v_j}{z-1})d\sigma_z+
o^{\infty}_R(1).
\end{equation}
From this and~(\ref{radjidentity}), we obtain, then,
\begin{equation}\label{radjidenv}
\frac{\rho'(0)}{\rho}=\Gamma_2\frac{\sigma'(0)}{\sigma}+\sum_{j=1}^{2d-2}L_j
v_j'(0) +\Delta,
\end{equation}
where
$$\Delta=-b'(0)\sum_{j=1}^{2d-2}L_j-a'(0)\sum_{j=1}^{2d-2}v_j
L_j+$$
$$\lim_{R\to
\infty}\{\frac{\Gamma_1}{\pi}\int_{C^*_R}\frac{\mu(z)}{z}d\sigma_z-
\frac{1}{\pi}\int_{V_R}\frac{\mu(z)}{z}d\sigma_z
\sum_{j=1}^{2d-2}L_j-
\frac{1}{\pi}\int_{V_R}\frac{\mu(z)}{z(z-1)}d\sigma_z
\sum_{j=1}^{2d-2}v_jL_j\} .$$
Let us find connections between $\Gamma_1$, $\Gamma_2$, and $L_j$,
$v_j$:
\begin{lem}\label{rconn}
\begin{equation}\label{rconn1}
\frac{\sigma-1}{\sigma}\Gamma_1=\sum_{j=1}^{2d-2} L_j, \ \
\frac{m}{\sigma}\Gamma_1=-\sum_{j=1}^{2d-2} v_j L_j.
\end{equation}
\end{lem}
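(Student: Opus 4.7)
\textbf{Proof proposal for Lemma~\ref{rconn}.} The plan is to compare the Laurent expansions at $z=\infty$ on both sides of the identity
\begin{equation*}
A(z)-(TA)(z)=\sum_{j=1}^{2d-2}\frac{L_j}{z-v_j}\tag{$\ast$}
\end{equation*}
up to order $1/z^2$. The right-hand side is immediate: expanding each $1/(z-v_j)=\sum_{k\ge 0} v_j^k/z^{k+1}$ gives
\begin{equation*}
\sum_{j=1}^{2d-2}\frac{L_j}{z-v_j}=\frac{\sum_j L_j}{z}+\frac{\sum_j v_j L_j}{z^{2}}+O(1/z^{3}).
\end{equation*}
So the two identities of the lemma will drop out of the $1/z$ and $1/z^{2}$ coefficients of the left-hand side, once computed.

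The nontrivial expansion is that of $TA$. Since $f(z)=\sigma z+m+O(1/z)$ and $\sigma\not=0$, for $|z|$ large the equation $f(w)=z$ has exactly one solution $w_{\infty}(z)$ tending to $\infty$ and $d-1$ ``finite'' solutions $w_{k}(z)$ tending to the zeros of $Q$, i.e.\ to the finite poles of $f$. At each finite pole $\alpha$, $f'(w)\sim -c/(w-\alpha)^{2}$ while $w_k(z)-\alpha=O(1/z)$, so $1/(f'(w_k))^{2}=O(1/z^{4})$ and $A(w_k)$ is bounded. Thus the finite branches contribute only $O(1/z^{4})$ to $TA(z)$, and all of the first two Laurent coefficients of $TA$ come from the branch $w_{\infty}$.

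For that branch, inverting $f(w)=z$ gives $w_{\infty}(z)=z/\sigma-m/\sigma+O(1/z)$, and since $f'(w)=\sigma+O(1/w^{2})$ we get $(f'(w_{\infty}))^{-2}=\sigma^{-2}(1+O(1/z^{2}))$. Plugging the expansion of $w_\infty$ into $A(w)=\Gamma_{1}/w+\Gamma_{2}/w^{2}+O(1/w^{3})$ and multiplying, one obtains
\begin{equation*}
(TA)(z)=\frac{\Gamma_{1}}{\sigma z}+\frac{1}{z^{2}}\Bigl(\frac{m\Gamma_{1}}{\sigma}+\Gamma_{2}\Bigr)+O(1/z^{3}).
\end{equation*}
Subtracting from $A(z)=\Gamma_{1}/z+\Gamma_{2}/z^{2}+O(1/z^{3})$ yields
\begin{equation*}
A(z)-(TA)(z)=\frac{\sigma-1}{\sigma}\cdot\frac{\Gamma_{1}}{z}-\frac{m\Gamma_{1}}{\sigma}\cdot\frac{1}{z^{2}}+O(1/z^{3}),
\end{equation*}
and matching the $1/z$ and $1/z^{2}$ coefficients against $(\ast)$ proves both equalities of Lemma~\ref{rconn}.

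The main (and essentially only) obstacle is a bookkeeping one: verifying carefully that the $d-1$ ``finite'' preimage branches really contribute $O(1/z^{4})$ (so they do not pollute the $1/z$ and $1/z^{2}$ coefficients), which reduces to the local normal form of $f$ at each simple pole. Everything else is a direct Laurent expansion; no further input beyond the identity $(\ast)$ (which is Theorem~\ref{rformula}) and the definition of $\Gamma_{1},\Gamma_{2}$ is needed.
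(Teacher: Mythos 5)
Your proof is correct and follows the paper's own argument essentially verbatim: expand $A$, $TA$, and the right-hand side of $(\ast)$ in Laurent series at $\infty$, observe that the $d-1$ preimage branches near the finite poles of $f$ only contribute $O(1/z^4)$ (because $1/f'(w_k)^2=O(1/z^4)$ there), compute the contribution of the branch $w_\infty(z)=(z-m)/\sigma+O(1/z)$, and match coefficients of $1/z$ and $1/z^2$. The paper carries out exactly this computation.
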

\begin{proof}
By definition, $A(z)=\Gamma_1/z+\Gamma_2/z^2+O(1/z^3)$. If a point
$e$ of the plane is such that $f(e)=\infty$, and $w$ is close to
$e$, so that $z=f(w)$ is close to $\infty$, then it is easy to
check that $f'(w)\sim C z^{2}$ as $z\to \infty$, where $C\not=0$.
We see that the asymptotics of $TA(z)$ at $\infty$ up
to $1/z^2$ is defined by the preimage $w_\infty$ of $z$, which is
close to $\infty$. In turn, $w_\infty=(z-m)/\sigma+O(1/z)$. It
gives us:
\begin{equation}\label{rta}
TA(z)=
\frac{\Gamma_1}{\sigma}\frac{1}{z}+(\Gamma_2+\frac{m\Gamma_1}{\sigma})\frac{1}{z^2}+O(\frac{1}{z^3}).
\end{equation}
Therefore,
\begin{equation}\label{ra-ta}
A(z)-TA(z)=\frac{\sigma-1}{\sigma}\Gamma_1\frac{1}{z}-\frac{m\Gamma_1}{\sigma}\frac{1}{z^2}+O(\frac{1}{z^3}).
\end{equation}
Comparing the latter asymptotics with the asymptotics at $\infty$
of $\sum_{j}\frac{L_j}{z-v_j}$, we get the statement.
\end{proof}
Note that~(\ref{rta})-(\ref{ra-ta}) will be used also later on in the
proof of Theorem~\ref{rattr}.

Let us continue.
In view of the latter connections, we can write that
\begin{equation}
\Delta=\frac{\Gamma_1}{\sigma}\Delta_0,
\end{equation}
and
\begin{equation}\label{rdeltaup}
\Delta_0=m a'(0)-(\sigma-1)b'(0) +\frac{1}{\pi}\lim_{R\to
\infty}\{\sigma\int_{C^*_R}\frac{\mu(z)}{z}d\sigma_z-(\sigma-1)\int_{V_R}\frac{\mu(z)}{z}d\sigma_z
+m\int_{V_R}\frac{\mu(z)}{z(z-1)}d\sigma_z\},
\end{equation}
where
$$V_R={\Ci}\setminus f(B^*(R)), \ \ \ C^*_R=B^*(R)\setminus
f(B^*(R)).$$ Our aim is to show that
$$\Delta_0=m'(0).$$
\paragraph{Evaluation of $m'(0)$.}
Here we solve the following general problem:
calculate $m'(0)$, where $f_t=h_t\circ f\circ h_t^{-1}$ is the quasiconformal
deformation of $f$, such that $f_t(z)=\sigma(t)z+m(t)+O(1/z)$ at $\infty$.
(Note that $\sigma'(0)$ has been calculated in Lemma~\ref{rlinspeed}.)
We get
from $f_t\circ h_t=h_t\circ f$, that $f_t(z)=f(z)+t V(z) + O(t^2)$
with
\begin{equation}\label{rm1}
V(z)=a'(0)(f(z)-z
f'(z))+b'(0)(1-f'(z))+\kappa(f(z))-f'(z)\kappa(z),
\end{equation}
and
\begin{equation}\label{rm2}
\kappa(z)=\frac{\partial \psi_t}{\partial
t}|_{t=0}(z)=-\frac{1}{\pi}\int_{\Ci}\mu(w)\frac{z(z-1)}{w(w-1)(w-z)}d\sigma_w.
\end{equation}
Using the asymptotics of $f$ at $\infty$, we proceed:
\begin{equation}\label{rm3}
V(z)=m a'(0)+(1-\sigma) b'(0) + \kappa(f(z))-f'(z)\kappa(z) +
O(\frac{1}{z}).
\end{equation}
Note that $V$ is a rational function of $z$. In particular, it is
meromorphic at $\infty$. Therefore,
\begin{equation}\label{rm4}
m'(0)=\frac{1}{2\pi i}\int_{|z|=R} \frac{V(z)}{z}dz= m
a'(0)+(1-\sigma) b'(0) + \frac{1}{2\pi i}\int_{|z|=R}
\frac{\kappa(f(z))-f'(z)\kappa(z)}{z}dz,
\end{equation}
for every $R$ large enough.
Now we need to calculate
\begin{equation}\label{rm5}
J=\frac{1}{2\pi i}\int_{|z|=R}
\frac{\kappa(f(z))-f'(z)\kappa(z)}{z}dz.
\end{equation}
By~(\ref{rm2}) and Fubini's theorem:
\begin{equation}\label{rm6}
J=-\frac{1}{\pi}\int_{\Ci}\frac{\mu(w)}{w(w-1)}d\sigma_w
\frac{1}{2\pi
i}\int_{|z|=R}\frac{1}{z}[\frac{f(z)(f(z)-1)}{w-f(z)}-\frac{f'(z)z(z-1)}{w-z}]dz.
\end{equation}
Denote the internal integral
\begin{equation}\label{rm7}
I(w,R)=\frac{1}{2\pi i}\int_{|z|=R}F(z,w)dz,
\end{equation}
where
\begin{equation}\label{rm8}
F(z,w)=\frac{1}{z}[\frac{f(z)(f(z)-1)}{w-f(z)}-\frac{f'(z)z(z-1)}{w-z}].
\end{equation}
For a large (but fixed!) $R$, we calculate $I(w,R)$ for different
$w$ using the Residue Theorem. First, it is easy to check that, as
$z\to \infty$,
$$F(z,w)=\frac{1}{z}\frac{w[f(z)(f(z)-1)-f'(z)z(z-1)]+[f'(z) f(z) z
(z-1)-z f(z) (f(z)-1)]}{(w-f(z))(w-z)}=$$
$$=\frac{(\sigma-1)(w-1)-m}{z} +O(\frac{1}{z^2}).$$
Therefore,
\begin{equation}\label{rm10}
I(w,R)=[(\sigma-1)(w-1)-m]-\sum_{|z|>R} Res F(z,w).
\end{equation}
The result depends on the position of $w$.

(i) If $|w|<R$, then $F(z,w)$ has no singular points for $|z|\ge
R$. Therefore,
\begin{equation}\label{rm11}
I(w,R)=(\sigma-1)(w-1)-m.
\end{equation}

(ii) If $|w|>R$ and $w\in {\Ci}\setminus f(B^*(R))$, then $F(z,w)$
has a single singular point in $|z|>R$ at $z=w$. Therefore,
\begin{equation}\label{rm12}
I(w,R)=[(\sigma-1)(w-1)-m] - f'(w)(w-1)=-(w-1)-m+O(\frac{1}{w}).
\end{equation}

(iii) If $w\in f(B^*(R))$, then $F(z,w)$ has two singular points
in $|z|>R$: at $z=w$ and at a unique $z=z_w$ in this domain, such
that $f(z_w)=w$. Therefore,
\begin{equation}\label{rm13}
I(w,R)=[(\sigma-1)(w-1)-m]-f'(w)(w-1)+\frac{w(w-1)}{z_w f'(z_w)}.
\end{equation}
We have: $z_w=(w-m)/\sigma +O(1/w)$ so, hence, after some 
straightforward manipulations,
\begin{equation}\label{rm14}
I(w,R)=[(\sigma-1)(w-1)-m]-f'(w)(w-1)+\frac{w(w-1)}{z_w
f'(z_w)}=O(\frac{1}{w}).
\end{equation}
With help of (i)-(iii), we calculate
\begin{equation}\label{rm15}
J=-\frac{1}{\pi}\int_{\Ci}\frac{\mu(w)}{w(w-1)}I(w,R)d\sigma_w=
J_1+J_2+J_3
\end{equation}
as follows.
$$J_1=-\frac{1}{\pi}\int_{|w|<R}\frac{\mu(w)}{w(w-1)}[\sigma-1)(w-1)-m]d\sigma_w=$$
$$-\frac{1}{\pi}(\sigma-1)\int_{|w|<R}\frac{\mu(w)}{w}d\sigma_w+
\frac{1}{\pi}m\int_{|w|<R}\frac{\mu(w)}{w(w-1)}d\sigma_w,$$
and
$$J_2=-\frac{1}{\pi}\int_{B^*(R)\setminus
f(B^*(R))}\frac{\mu(w)}{w(w-1)}[-(w-1)-m+O(\frac{1}{w})]d\sigma_w=$$
$$\frac{1}{\pi}\int_{B^*(R)\setminus
f(B^*(R))}\frac{\mu(w)}{w}d\sigma_w+\frac{m}{\pi}\int_{B^*(R)\setminus
f(B^*(R))}\frac{\mu(w)}{w(w-1)}d\sigma_w+\int_{B^*(R)\setminus
f(B^*(R))}O(\frac{1}{w^3})d\sigma_w,$$

and, at last,

$$J_3=-\frac{1}{\pi}\int_{f(B^*(R))}\frac{\mu(w)}{w(w-1)}O(\frac{1}{w})d\sigma_w
=\int_{f(B^*(R))}O(\frac{1}{w^3})d\sigma_w.$$

Since $J$ is independent on $R$, we can write then:
$$m'(0)=m a'(0)+(1-\sigma) b'(0) - \frac{1}{\pi}\lim_{R\to \infty}
\{(\sigma-1)\int_{|w|<R}\frac{\mu(w)}{w}d\sigma_w-\int_{B^*(R)\setminus
f(B^*(R)}\frac{\mu(w)}{w}d\sigma_w-$$
$$m\int_{|w|<R}\frac{\mu(w)}{w(w-1)}d\sigma_w-m\int_{B^*(R)\setminus
f(B^*(R))}\frac{\mu(w)}{w(w-1)}d\sigma_w\}.$$
In other words, we have proved:
\begin{lem}\label{m'}
$$m'(0)=m a'(0)+(1-\sigma) b'(0) + $$
$$\frac{1}{\pi}\lim_{R\to \infty} \{-(\sigma-1)\int_{{\Ci}\setminus
f(B^*(R))}\frac{\mu(w)}{w}d\sigma_w+m\int_{{\Ci}\setminus
f(B^*(R))}\frac{\mu(w)}{w(w-1)}d\sigma_w+
\sigma\int_{B^*(R)\setminus
f(B^*(R))}\frac{\mu(w)}{w}d\sigma_w\}.$$
\end{lem}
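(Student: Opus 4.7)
The plan is to follow the standard Ahlfors-type variational approach: differentiate the functional equation $f_t\circ h_t=h_t\circ f$ at $t=0$, extract $m'(0)$ as a residue at infinity, and then swap the order of integration using Fubini to separate the data at the parameter point ($a'(0)$, $b'(0)$, $m$, $\sigma$) from the Beltrami form $\mu$.

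\emph{Step 1: the linear variation $V$.} Write $h_t=a(t)\psi_t+b(t)$ with $a(0)=1$, $b(0)=0$, so that $(\partial h_t/\partial t)|_{t=0}(z)=a'(0)z+b'(0)+\kappa(z)$ with $\kappa$ given by~(\ref{rm2}). Differentiating $f_t\circ h_t=h_t\circ f$ at $t=0$ gives $f_t(z)=f(z)+tV(z)+O(t^2)$ with $V$ as in~(\ref{rm1}). Using $f(z)=\sigma z+m+O(1/z)$ at $\infty$ yields the expansion~(\ref{rm3}): $V(z)=ma'(0)+(1-\sigma)b'(0)+\kappa(f(z))-f'(z)\kappa(z)+O(1/z)$.

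\emph{Step 2: $m'(0)$ as a contour integral.} Since $V$ is rational and meromorphic at $\infty$, and since $m'(0)$ is the coefficient of the constant term in the expansion of $f_t-f$ at $\infty$ (the $z^0$-term), we have $m'(0)=\frac{1}{2\pi i}\int_{|z|=R} V(z)/z\,dz$ for every sufficiently large $R$. The polynomial part of $V$ at $\infty$ contributes $ma'(0)+(1-\sigma)b'(0)$, and there remains the integral $J$ in~(\ref{rm5}) to be evaluated.

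\emph{Step 3: Fubini and the inner contour integral $I(w,R)$.} Plug the formula~(\ref{rm2}) for $\kappa$ into $J$ and exchange the order of integration to get $J=-\frac{1}{\pi}\int_{\Ci}\mu(w)/(w(w-1))\cdot I(w,R)\,d\sigma_w$ with $I(w,R)$ as in~(\ref{rm7})--(\ref{rm8}). Now fix $R$ large and compute $I(w,R)$ by the residue theorem applied to the exterior of $|z|=R$. The expansion of the integrand at $z=\infty$ gives the contribution $(\sigma-1)(w-1)-m$. The remaining residues depend on which of the three regions $w$ belongs to: $|w|<R$ (no extra pole), $w$ in the annular region ${\Ci}\setminus f(B^*(R))$ outside $|z|=R$ (the pole at $z=w$), or $w\in f(B^*(R))$ (the poles at $z=w$ and at the unique $z_w\in B^*(R)$ with $f(z_w)=w$). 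Careful asymptotic estimates for $z_w=(w-m)/\sigma+O(1/w)$ give the $O(1/w)$ control in cases (ii) and (iii) that is needed to ensure the individual pieces do not blow up.

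\emph{Step 4: assembly.} Split $J=J_1+J_2+J_3$ according to the three regions for $w$, substitute the explicit expressions for $I(w,R)$, and note that all the $O(1/w^3)$ contributions in the annular and exterior regions vanish in the limit $R\to\infty$. The remaining three surviving integrals reorganise into the precise combination claimed in the lemma. The main technical point, and the only place where one must be careful, is that none of the individual integrals over ${\Ci}\setminus f(B^*(R))$ and over $C^*_R$ converges absolutely; only their specific linear combination with the coefficients $-(\sigma-1)$, $m$, and $\sigma$ has a finite limit. This is exactly the same cancellation that underlies the adjoint identity in Sect.~\ref{adjiden}, and having $J$ independent of $R$ (which follows a priori from Step~2) makes the existence of the limit automatic once one verifies that the individual pieces are well-defined for each fixed $R$.
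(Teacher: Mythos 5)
Your plan reproduces the paper's own proof step for step: the variational formula $f_t=f+tV+O(t^2)$ with~(\ref{rm1})--(\ref{rm3}), the contour integral expression~(\ref{rm4}) for $m'(0)$, Fubini to pass to the inner integral $I(w,R)$ in~(\ref{rm7})--(\ref{rm8}) evaluated by residues in the three regimes $|w|<R$, $w\in C^*_R$, $w\in f(B^*(R))$, and then the decomposition $J=J_1+J_2+J_3$ with the $R$-independence of $J$ guaranteeing the limit. This is the same argument as in the paper (and your remark about the need to take the specific linear combination before letting $R\to\infty$ is a correct reading of that last step), so no further commentary is needed.
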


\

If we compare the latter expression for $m'(0)$ to the
expression~(\ref{rdeltaup}) for $\Delta_0$, we see that 
$\Delta_0=m'(0)$. This finishes the proof of Theorem~\ref{rteich}, and,
therefore,
Theorem~\ref{rmain}, for $f\in S_d$.
\section{More generality: multiple critical points yet finite
critical values}\label{multiple} 

Let $f\in \Lambda_{d, \bar p}$, and assume that
all critical values of $f$ are finite: if $c_j$ are all different
critical points of $f$, then $v_j=f(c_j)\not=\infty$, $1\le j\le
p$. As usual, $m_j$ is the multiplicity of $c_j$. By
Theorem~\ref{rformula}, we have:
\begin{equation}\label{rruellehyp2}
B(z)-(TB)(z)= \sum_{j=1}^{p}\frac{\tilde L_j}{z-v_j},
\end{equation}
where
\begin{equation}\label{lj2}
\tilde L_j=
-\frac{1}{(m_j-1)!}\frac{d^{m_j-1}}{dw^{m_j-1}}|_{w=c_j}(\frac{B(w)}{Q_j(w)}),
\end{equation}
where, in turn, $Q_j$ is a local analytic function near $c_j$
defined by $f'(z)=(z-c_j)^{m_j}Q_j(z)$, so that $Q_j(c_j)\not=0$.
What we need to check is that
\begin{equation}\label{ljr}
\tilde L_j=\frac{\partial \rho}{\partial v_j}.
\end{equation}
Without loss of generality, $j=1$. The idea is as follows. Using
the coordinates $\bar v(f)$, we consider a small analytic path
$f_t$ through $f$ in the space $\Lambda_{d, \bar p}$, such that
only $v_1$ changes along this path. Then we perturb each $f_t$ in
such a way, that all critical points of the perturbed map are
simple, and apply Theorem~\ref{rmain} in the proven case of simple
critical points. Then we get an integral formula for the variation of
$\rho$ along the path, which will imply~(\ref{ljr}).

Let us do the required analytic work. By Proposition~\ref{rlocal},
there exists a family $\{f_t\}$ of maps from $\Lambda_{d, \bar p}$
of a complex parameter $t$, $|t|<\delta$, where $\delta>0$ is
small, such that $f_0=f$ and, for every $t$, $\bar
v(f_t)=\{\sigma, m, v_1+t\delta, v_2,...,v_p\}$. Denote by
$c_j(t)$ a critical point of $f_t$, the continuation of the
critical point $c_j$. If $\delta$ is small enough, the periodic
orbit $O$ of $f$ extends holomorphically to a periodic orbit $O_t$
of $f_t$. We perturb $f_t$ as follows: given $\epsilon$ with small
enough modulus, define $f_{t, \epsilon}(z)=f_t(z)+\epsilon z$. It
is easy to see, that all critical points of $f_{t, \epsilon}$ are
simple, that is, $f_{t, \epsilon}\in S_d$. 
To be more precise, for every $|\epsilon|\not=0$ small
enough, to every critical point $c_j(t)$ of $f_t$ there
corresponds $m_j$ simple critical points $c_{j,k}(t, \epsilon)$,
$k=1,...,m_j$, of $f_{t, \epsilon}$, so that $\lim_{\epsilon\to 0}
c_{j, k}(t, \epsilon)=c_j(t)$. Denote $v_{j, k}(t, \epsilon)=f_{t,
\epsilon}(c_{j, k}(t, \epsilon))$. Furthermore, if $\delta$,
$|\epsilon|$ are small, the periodic points of $O$ extends to
holomorphic functions of $t, \epsilon$ and form a periodic orbit
$O(t, \epsilon)$ of $f_{t, \epsilon}$. Its multiplier $\hat
\rho(t, \epsilon)$ is a holomorphic function in $t, \epsilon$,
too. In particular, $\hat \rho(t, 0)=\hat \rho(t)$, the multiplier
of $O_t$ for $f_t$. We have:
$$\hat \rho(t)-\hat \rho(0)=
\lim_{\epsilon\to 0}\{\hat \rho(t, \epsilon)-\hat \rho(0, \epsilon)\}.$$ 
We fix $\epsilon\not=0$ and calculate $\hat \rho(t,
\epsilon)-\hat \rho(0, \epsilon)$ as follows. Since all critical
points of $f_{t, \epsilon}$ are simple, then 
$\hat \rho(t,\epsilon)$ is a holomorphic function $\rho(\bar v(f_{t,
\epsilon}))$ of $\bar v(f_{t, \epsilon})=\{\sigma(t, \epsilon),
m(t, \epsilon), \{v_{j, k}(t, \epsilon)\}_{j=1,...,p;
k=1,...,m_j}\}$. Now, $f_{t,
\epsilon}(z)=(\sigma+\epsilon)z+m+O(1/z)$, hence, $\sigma(t,
\epsilon)=\sigma+\epsilon, m(t, \epsilon)=m$.

We will denote by $(z)_t=\partial z/\partial t$. 
In particular, $(\sigma)_t(t, \epsilon)=(m)_t(t,
\epsilon)=0$. We have:
$$\hat \rho(t_0, \epsilon)-\hat \rho(0,
\epsilon)=\int_0^{t_0}(\hat \rho)_t(t, \epsilon)dt=$$
$$\int_0^{t_0} (\frac{\partial\rho}{\partial \sigma}(\sigma)_t(t,
\epsilon)+ \frac{\partial\rho}{\partial m} (m)_t(t, \epsilon)+
\sum_{j=1}^p\sum_{k=1}^{m_j}\frac{\partial\rho}{\partial v_{j, k}}
(v_{j, k})_t(t, \epsilon))dt=\int_0^{t_0}
(\sum_{j=1}^p\sum_{k=1}^{m_j}\frac{\partial\rho}{\partial v_{j,
k}} (v_{j, k})_t(t, \epsilon))dt.$$ 
By the proven formulae for 
$f_{t, \epsilon}\in S_d$,
$$\frac{\partial \rho}{\partial v_{j,k}}=-
\frac{B_{O(t,\epsilon)}(c_{j, k}(t, \epsilon))}{f_{t,
\epsilon}"(c_{j, k}(t, \epsilon))}.$$
Also,
$$(v_{j, k})_t(t, \epsilon)=\frac{d}{dt}f_{t, \epsilon}(c_{j, k}(t,
\epsilon))=(f_{t, \epsilon})_t(c_{j, k}(t, \epsilon))+
f_{t,\epsilon}'(c_{j, k}(t, \epsilon))(c_{j, k})_t(t, \epsilon)= 
(f_{t,\epsilon})_t(c_{j, k}(t, \epsilon)).$$ Taking this into account, we
can find $r>0$ fixed and write:
$$\hat \rho(t_0, \epsilon)-\hat \rho(0,
\epsilon)=-\int_0^{t_0} (\sum_{j=1}^p\sum_{k=1}^{m_j}\frac{B_{O(t,
\epsilon)}(c_{j, k}(t, \epsilon))}{f_{t, \epsilon}"(c_{j, k}(t,
\epsilon))} (f_{t, \epsilon})_t(c_{j, k}(t, \epsilon)))dt=$$
$$-\int_{0}^{t_0}(\sum_{j=1}^p\frac{1}{2\pi
i}\int_{|w-c_j(t)|=r}\frac{B_{O(t, \epsilon)}(w)}{f_{t,
\epsilon}'(w)}(f_{t, \epsilon})_t(w)dw)dt.$$ Passing to a limit as
$\epsilon\to 0$, we obtain:
$$\hat\rho(t_0)-\hat\rho(0)=-\int_{0}^{t_0}(\sum_{j=1}^p\frac{1}{2\pi
i}\int_{|w-c_j(t)|=r}\frac{B_{O_t}(w)}{f_{t}'(w)}(f_{t})_t(w)dw)dt.$$
It follows from $f_t(w)=v_j(t)+O(w-c_j(t))^{m_j+1}$, that
$$(f_t)_t(w)=(v_j)_t(t)+O(w-c_j(t))^{m_j}.$$
On the other hand, by the choice of $f_t$, $(v_1)_t(t)=1$ while
$(v_j)_t(t)=0$ for $2\le j\le p$. Therefore,
$$\hat \rho(t_0)-\hat \rho(0)=\int_{0}^{t_0} L_1(t)dt,$$
where
$$L_1(t)=-\frac{1}{2\pi
i}\int_{|w-c_1(t)|=r}\frac{B_{O_t}(w)}{f_t'(w)}dw.$$ In the local
coordinates of $\Lambda_{d, \bar p}$ near $f_0=f$,
$$\rho(\sigma, m, v_1+t_0, v_2,...,v_p)-\rho(\sigma, m,
v_1,...,v_p)=\int_{0}^{t_0} L_1(t)dt.$$ It yields immediately,
that
$$\frac{\partial \rho}{\partial v_1}=-\frac{1}{2\pi
i}\int_{|w-c_1|=r}\frac{B(w)}{f'(w)}dw=L_1(0).$$
\section{Finishing the proof of Theorem~\ref{rmain}:
the case of infinite critical values}\label{infcrv} 
Let $f_0\in
\Lambda_{d, \bar p'}$ have some critical values equal to $\infty$,
and $O$ be a periodic orbit of $f_0$ with the multiplier different
from $1$. Without loss of generality one can assume that $f_0$ has
different critical points $c_1^0,...,c_{q'}^0$, so that the
critical values $v_j^0=f_0(c_j^0)$ are finite for $1\le j\le
p<p'$, and $v_j^0=\infty$ for $p<j\le p'$. As usual, $m_j$ denotes
the multiplicity of $c^0_j$. By Proposition~\ref{rlocal}, $\bar
v(f)=(\sigma, m, v_1,...,v_p,1/v_{p+1},...,1/v_{p'})$ are local
coordinates in $\Lambda_{d, \bar p'}$ in a neighborhood of $\bar
v(f_0)=(\sigma_0, m_0, v_1^0,...,v_p^0,0,...,0)\in {\bf
C}^{p'+2}$. Therefore, $f$ and $\rho$ are holomorphic in $\bar
v(f)$ in a neighborhood of $\bar v(f_0)$. In particular, for every
$1\le j\le p'$, $\partial \rho/\partial v_j$ is continuous in
$\bar v(f)$ at $\bar v(f_0)$. Consider now a sequence $f_n\in
\Lambda_{d, \bar p'}$, such that all critical values of each $f_n$
are finite, and $\bar v(f_n)\to \bar v(f_0)$. Let $O_n$ be the
periodic orbit of $f_n$, such that $O_n\to O$ as $n\to \infty$,
$\rho_n$ the multiplier of $O_n$, and let $B_n$ and $T_n$ be the associated
rational function for $O_n$ and the transfer operator for $f_n$.
Clearly, $B_n\to B$, $T_n\to T$, the corresponding objects for
$f_0$. To prove Theorem~\ref{rmain} for $f_0$, it remains to show two
facts:

(a) $\partial \rho_n/\partial v_j\to 0$, $n\to \infty$, for
$p<j\le p'$,

(b) 
$$\frac{\partial \rho_n}{\partial (1/v_j)}=-v_j(f_n)^2\frac{\partial
\rho_n}{\partial v_j}\to \frac{1}{2\pi
i}\int_{|w-c^0_j|=r}\frac{B(w)}{(1/f_0)'(w)}dw$$
$$=\frac{1}{(m_j-1)!}\frac{d^{m_j}}{d
w^{m_j}}|_{w=c^0_j}(\frac{B}{Q_{j}}),$$
$n\to \infty$, for $p<j\le p'$, where
$(1/f_0)'(w)=(w-c^0_j)^{m_j}Q_j(w)$.

Since $v_j(f_n)\to \infty$, $p<j\le p'$, then (a) follows from
(b). To prove (b), let us fix $p<j\le p'$. Then fix $n$, and
denote $c=c_j(f_n)$, $m=m_j$, and $Q$, such that $f_n'(w)=(w-c)^m
Q(w)$. As $w\to c$, we can write
$$\frac{(f_n(w))^2 B_n(w)}{(f_n'(w))^2}=
\frac{(f_n(c)+O(w-c)^{m+1})^2 B_n(w)}{(w-c)^m
(Q(w))^2}=\frac{(f_n(c))^2}{(w-c)^m (Q(w))^2}+O(w-c).$$ Therefore,
for $r_n>0$ small enough and every $p<j\le p'$,
$$(v_j(f_n))^2\frac{\partial \rho_n}{\partial v_j}=-\frac{1}{2\pi
i}\int_{|w-c_j(f_n)|=r_n}\frac{(f_n(w))^2 B_n(w)}{f_n'(w)}dw.$$
For all $n$ large, one can increase $r_n$ to a size, which is
independent on $n$. Indeed, $f_n\to f_0$ in $\Lambda_{d, \bar
p'}$. Hence, for a point $w_0$ near $c_j(f_n)$, if
$f_n(w_0)=\infty$, then $f_n\sim C_n/(w-w_0)$, $w\to w_0$, and
$(f_n(w))^2/f_n'(w)$ has no singularity at $w_0$. This shows that
one can fix $r_n=r>0$. Then, for every $p<j\le p'$ and fixed $r$,
we can pass to a limit as $n\to \infty$:
$$-v_j(f_n)^2\frac{\partial \rho_n}{\partial v_j}=\frac{1}{2\pi
i}\int_{|w-c_j^0|=r}\frac{(f_n(w))^2 B_n(w)}{f_n'(w)}dw \to
\frac{1}{2\pi i}\int_{|w-c^0_j|=r}\frac{(f_0(w))^2
B(w)}{f_0'(w)}dw.$$
The proof of Theorem~\ref{rmain} is completed.
\section{Multipliers as local coordinates}\label{rattrs}
\subsection{Contraction of the operator $T$}
We need the following Proposition~\ref{rcontr}.
A more general statement, with a careful
consideration of parabolic points, is contained in~\cite{Eps}.
\begin{prop}\label{rcontr}
Let $P$ be a non-empty union of some non-repelling periodic orbits
of $f\in \Lambda_{d, \bar p}$. Consider a non-zero
rational function $\psi$, such that:

(i) as $z\to \infty$, one of the following conditions hold:
either (a) $\psi(z)=O(1/z^3)$, or (b) $\psi(z)=O(1/z)$ and 
$f(z)=\sigma z+O(1/z)$, where $|\sigma|\ge 1$,
or (c) $\psi(z)=O(1/z^2)$ and $f(z)=z+m+O(1/z)$, 

(ii) if $b\in P$ is a point of a periodic orbit $O$, then $\psi$ has either
double pole at every point of $O$, or at most simple pole at every point of
$O$; moreover, $\psi$ has at most simple poles outside of the set $P$.

Then $\psi$ is not a fixed point of the operator $T$.
\end{prop}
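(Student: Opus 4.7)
I argue by contradiction: suppose $T\psi=\psi$ with $\psi\neq 0$ satisfying (i), (ii). The scheme extends the contraction argument of Sect.~\ref{attrs}. Partition $P=P_d\cup P_s$, where $\psi$ has double poles on the orbits of $P_d$ and at most simple poles on those of $P_s$, and split further $P_d=P_d^a\cup P_d^n$ by attracting / neutral. Build a test domain $V_{r,R}=\C\setminus U_{r,R}$ where $U_{r,R}$ excises (1) the full basin $\mathcal{A}(O)$ of each $O\subset P_d^a$; (2) for each $O=\{b_1^O,\dots,b_{n_O}^O\}\subset P_d^n$, the set $B(b_1^O,r)\cup\bigcup_{k=1}^{n_O-1} f_O^{-k}(B(b_1^O,r))$ as in Sect.~\ref{attrs}; and (3) in cases (b) and (c) of (i), the neighborhood $B^*(R)$ of $\infty$. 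Hypothesis (ii) together with the decay in (i) guarantees that $\psi$ is integrable on $V_{r,R}$ and on $f^{-1}V_{r,R}$. Applying (\ref{rcontra}) with $T\psi=\psi$ yields
\[
\int_{V_{r,R}\setminus f^{-1}V_{r,R}}|\psi|\,d\sigma\;\le\;\int_{f^{-1}V_{r,R}\setminus V_{r,R}}|\psi|\,d\sigma.\qquad(\ast)
\]
Attracting basins are backward invariant, so they contribute nothing to either side; each neutral excision contributes to the RHS an annular integral that, as $r\to 0$, tends to $\sum_{O\subset P_d^n} 2\pi C_O \log\lambda^{-1}$, with $C_O$ the leading double-pole coefficient of $\psi$ at $b_1^O$. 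Thus the RHS can be made arbitrarily small by choosing $\lambda<1$ close to $1$ after sending $r\to 0$.

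The main obstacle, and crux of the proof, is a uniform positive lower bound on the LHS of $(\ast)$ independent of $r$ and of $\lambda$ close to $1$, for a suitably chosen $R$. In case (b), $V\setminus f^{-1}V$ contains the annulus $\{R/|\sigma|+o(R)<|z|<R\}$; since $\psi=c/z+O(1/z^2)$ with $c\neq 0$ (otherwise $\psi\in O(1/z^2)$ or better puts us in cases (c) or (a)), the LHS is bounded below by a positive constant for $R$ large. In case (c), where $\infty$ is a parabolic fixed point and $\psi=O(1/z^2)$, one replaces the annulus by a parabolic fundamental region at $\infty$ obtained by removing attracting petals in analogy with the finite-orbit neutral excision; a petal analysis then yields the required positive lower bound on $\int|\psi|$ over this fundamental region. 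In case (a), if $P_d\neq\emptyset$ the LHS positivity comes from the far preimages of the neutral-orbit excisions (contributing a positive integral by $\psi\not\equiv 0$), while if $P_d=\emptyset$ then $\psi$ is integrable on all of $\C$; here the equality case $|T\psi|=T|\psi|$ a.e., combined with meromorphic rigidity of $\psi$ and the pole restriction in (ii), identifies $\psi\,dz^2$ with a $T$-invariant integrable quadratic differential on the sphere, which by the standard Thurston contraction argument (\cite{dht},\cite{mcmsul}) must vanish, contradicting $\psi\ne 0$.

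Once positivity of the LHS of $(\ast)$ is in hand, letting $r\to 0$ and then $\lambda\to 1$ produces the contradiction: the LHS stays $\ge\varepsilon>0$ while the RHS tends to $0$. The hardest step is the uniform lower bound in case (c): the parabolic dynamics at $\infty$ requires a petal-based fundamental domain substitute for the small-annulus argument used at neutral finite periodic orbits, which is precisely the "careful consideration of parabolic points" indicated by the paper's reference to~\cite{Eps}.
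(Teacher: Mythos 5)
Your set-up (excised domain, the inequality $(\ast)$ derived from~(\ref{rcontra}) with $T\psi=\psi$) is sound, but the central strategic claim — a uniform positive lower bound on $\int_{V\setminus f^{-1}V}|\psi|$ as $r\to 0$ — is false in several of the cases, and this is a genuine gap. In case~(a), where $W_\infty=\emptyset$ and you have excised the full attracting basins, $V\setminus f^{-1}V$ consists only of thin annuli and far preimages coming from the neutral excisions; the thin annuli carry $\int|\psi|\to 0$ and the far preimages shrink to points, so the LHS tends to $0$. In case~(b) with $|\sigma|=1$ the annulus $\{R/|\sigma|<|z|<R\}$ at $\infty$ degenerates, so again the LHS $\to 0$. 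In case~(c) the fundamental ``crescent'' near the parabolic fixed point at $\infty$ has area $\sim R$ while $|\psi|=O(1/R^2)$ there, so its $\psi$-mass tends to $0$: the petal analysis you invoke cannot produce a positive lower bound. (Your dismissal of $c=0$ in case~(b) is also not valid: if $\psi=d/z^2+O(1/z^3)$ with $d\ne 0$ and $|\sigma|>1$ you are in neither case~(a) nor case~(c), yet $c=0$.) Consequently there is nothing to contradict in the balanced cases, and your plan ``LHS $\ge\varepsilon$, RHS $\to 0$'' cannot close the proof.

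The paper handles this differently. By excising only small disks at the double-pole orbits (not the full basins, which matter), it shows $\limsup_{r\to 0}\bigl(\int_{f^{-1}V_r}|\psi|-\int_{V_r}|\psi|\bigr)\le 0$, with \emph{strict} inequality precisely when $\psi$ has a double pole at an attracting or superattracting orbit in $P$ — the log-divergent annular contribution there gives the contradiction immediately. Your full-basin excision discards this channel. When there is no such strict drop, the crux is the equality case: one deduces that $\bigl|\sum_{f(w)=z}\psi(w)/f'(w)^2\bigr|=\sum_{f(w)=z}|\psi(w)|/|f'(w)|^2$ for every $z$, hence the meromorphic function $\psi\circ f\cdot (f')^2/\psi$ is a positive constant, which must be $d=\deg f$; the local expansion of $\psi(f^n(z))((f^n)'(z))^2=d^n\psi(z)$ near any $b\in P$, with $\psi(z)\sim A(z-b)^l$, forces $\rho^{l+2}=d^n$ when $\rho\ne 0$ (impossible since $|\rho|\le 1$, $d^n\ge 2$, $l\ge-2$) and $l=-2$ when $\rho=0$ (already excluded). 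You gesture at this equality-case argument only in the sub-case $P_d=\emptyset$ and then cite ``the standard Thurston contraction argument'' as a black box; but here the Thurston-type argument \emph{is} exactly this equality-case computation, which must be carried out in the present non-postcritically-finite setting with $\psi$ having poles along $P$. Without it, the proof is incomplete in every case in which the LHS tends to $0$.
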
 
\begin{proof} (cf.~\cite{Eps},~\cite{dht}, 
see also~\cite{Leij0},~\cite{leij1})
Denote $\hat P=\cup_{j}O_j$, 
where $O_j$ are different
periodic orbits from the set $P$, such that $\psi$ has double pole at every
point of $O_j=\{b_k^j\}_{k=1}^{n_j}$. (If $\psi$ has no double
poles, then $\hat P=\emptyset$.) 
Denote by $\rho_j$ the multiplier of $O_j$.
Given $r>0$ small enough, we define a domain $V_r={\bf C}\setminus
\{W_\infty\cup \cup_j W_j\}$, where:

(i) if $\psi(z)=O(1/z^3)$ at infinity, then $W_\infty$ is empty,
and 
otherwise define
$W_\infty=B^*(1/r)$, a neighborhood of $\infty$.

(ii) for every $O_j$, the set $W_j$ is the disk $B(b_1^j,r)$
union with $f_{j}^{-k}(B(b^j_1,r))$, for
$1\le k\le n_j-1$, where $f_{j}^{-k}$ is a local branch
of $f^{-k}$ taking $b^j_1\in O_j$ to $b^j_{n_j-k+1}$.

Obviously, $V_r\to {\bf C}$ as $r\to 0$, and $\psi$ is
integrable in $V_r$, for $r>0$.
Let $j'$, $j"$, and $j'''$ denote the indexes corresponding to
attracting, neutral, and superattracting periodic orbits in $\hat P$
respectively. For every $j$, fix a positive number $a_j$, such that
$|f^{n_j}(z)-b_1^j-\rho_j (z-b_1^j)|<a_j|z-b_1^j|^2$,
for $|z-b_1^j|<r$, $r$ small enough. Also, for
$f(z)=\sigma z+m+O(1/z)$, fix $a>0$, such that $|f(z)-\sigma z-m|<a/|z|$, for
$|z|$ large. Now we have:
$$f^{-1}(V_r)\subset V_r\cup \hat W_\infty \cup \cup_{j"}\hat W_{j"}
\setminus \{\cup_{j'}
\hat W_{j'}\cup \cup_{j'''}\hat W_{j'''}\}.$$
Here:

(1) $\hat W_\infty$ is empty if either $W_\infty$ is empty
or $|\sigma|>1$,
and $\hat W_\infty\subset B^*(1/r)\setminus B^*(1/r+|m|+ar)$ otherwise
(here $m=0$ in the case (i)(b)).

(2) $\hat W_{j"}\subset B(b_1^{j"},r)\setminus B(b_1^{j"},
r-a_{j"}r^2)$, for every neutral periodic orbit $O_{j"}$.

(3) $B(b_1^{j'},|\rho_{j'}|^{-1}r-a_{j'}r^2)\setminus B(b_1^{j'},
r)\subset \hat W_{j'}$, for every attracting though not
superattracting periodic orbit $O_{j'}$.

(3') $B(b_1^{j'''},2r)\setminus B(b_1^{j'''}, r)\subset \hat
W_{j'''}$, if $O_{j'''}$ is superattracting.

It follows (see also the proof of Theorem~\ref{attr}) that:
\begin{equation}\label{rj''inf}
\lim_{r\to 0}\int_{\hat W_{\infty}}|\psi|d\sigma=0, \ \ 
\lim_{r\to 0}\int_{\hat W_{j"}}|\psi|d\sigma=0,
\end{equation}
\begin{equation}\label{rj''}
\liminf_{r\to 0}\int_{\hat W_{j'}}|\psi|d\sigma\ge
2\pi|A_{j'}|\log|\rho_{j'}|^{-1}, \ \  \liminf_{r\to 0}\int_{\hat W_{j'''}}
|\psi|d\sigma\ge 2\pi|A_{j'''}|\log 2,
\end{equation}
where $A_j\not=0$ is so that $\psi(z)\sim A_j/(z-b_1^j)^2$.
Therefore, under the conditions (i)-(ii),
\begin{equation}\label{rf-1}
\limsup_{r\to 0} \{\int_{f^{-1}(V_r)}|\psi(z)|d\sigma_z - \int_{V_r}
|\psi(z)|d\sigma_z\} \le 0,
\end{equation}
and, moreover, the inequality is strict, if $\psi$ has at least one double pole
at an attracting or superattracting point of $P$.
Assume now the contrary, i.e. $\psi=T\psi$.
If the inequality in~(\ref{rf-1}) is indeed strict, we get at once a contradiction
as in the proof of Theorem~\ref{attr}:
$0=\int_{V_r} |\psi|-\int_{V_r} |T\psi|\ge \int_{V_r} |\psi|-
\int_{f^{-1}(V_r)} |\psi|>0$, for some $r>0$.
This proves that $\psi$ cannot have double poles at attracting and
superattracting points of $P$.
Let us show further that
\begin{equation}\label{rtrieq}
|\sum_{w: f(w)=z}\frac{\psi(w)}{(f'(w))^2}|=\sum_{w:
f(w)=z}\frac{|\psi(w)|}{|f'(w)|^2},
\end{equation}
for every $z\in {\bf C}$, where both sides are finite.
Indeed, assume that, for some $z_0$, there is the strict
inequality in~(\ref{rtrieq}). Then there are a neighborhood $U$ of
$z_0$ and $\epsilon>0$, such that, for $z\in U$,
\begin{equation}\label{rtriineq}
|\sum_{w: f(w)=z}\frac{\psi(w)}{(f'(w))^2}|<(1-\epsilon)\sum_{w:
f(w)=z}\frac{|\psi(w)|}{|f'(w)|^2}.
\end{equation}
One writes, for $r>0$ small enough
(so that $U\subset V_r$):
$$\int_{V_r}|\psi(z)|d\sigma_z=\int_{V_r}|(T\psi)(z)|d\sigma_z=
\int_{V_r\setminus
U}|(T\psi)(z)|d\sigma_z+\int_{U}|(T\psi)(z)|d\sigma_z<$$
$$
\int_{f^{-1}(V_r\setminus
U)}|\psi(z)|d\sigma_z+(1-\epsilon)\int_{f^{-1}(U)}|\psi(z)|d\sigma_z=
\int_{f^{-1}(V_r)}|\psi(z)|d\sigma_z-\epsilon
\int_{f^{-1}(U)}|\psi(z)|d\sigma_z.$$
Taking into account~(\ref{rf-1}), we conclude that $\psi=0$ on
$U$, hence, everywhere. This contradiction shows that
~(\ref{rtrieq}) holds for every $z$ as above. 
In turn,~(\ref{rtrieq}) and $T\psi=\psi$ imply, that a meromorphic 
function $\psi\circ f (f')^2/\psi$ takes only positive values, hence, 
this function is 
the constant $d=\deg f$. Now, consider the identity 
$\psi(f^n(z)) ((f^n)'(z))^2=d^n \psi(z)$ 
near a point $b\in P$ of period $n$ and with multiplier $\rho$,
and plug in it the local expansion for $f^n$ and 
$\psi(z)\sim A(z-b)^l$, with $A\not=0$ and $l\ge -2$,
near $b$. Then 
we see that: if $\rho\not=0$, then $\rho^{l+2}=d^n$, i.e. $l>-2$
and $|\rho|>1$; if $\rho=0$, then $l=-2$. 
In either case, it is a contradiction. 
\end{proof}
\subsection{Proof of Theorem~\ref{rattr}}
Consider first the case ($H_\infty$).
Assume the contrary: the rank of the matrix
\begin{equation}\label{rma}
{\bf O}=(\frac{\partial \rho_j}{\partial \sigma}, \frac{\tilde{\partial}^V
\rho_j}{\partial V_1},..., \frac{\tilde{\partial}^V \rho_j}{\partial
V_{k-1}}, \frac{\tilde{\partial}^V \rho_j}{\partial V_{k+1}},
\frac{\tilde{\partial}^V \rho_j}{\partial V_q})_{1\le j\le r}
\end{equation}
is less than $r$. Without loss of generality,
one can assume that $k=q$. Then the vectors
$(\frac{\partial \rho_j}{\partial \sigma}, \frac{\tilde{\partial}^V
\rho_j}{\partial V_1},..., \frac{\tilde{\partial}^V \rho_j}{\partial
V_{q-1}})$, $1\le j\le r$, are linearly dependent.

Let $O_j=\{b^j_k\}_{k=1}^{n_j}$, the set of points of the periodic
orbit $O_j$ of period $n_j$, 
the function $\tilde B_j$ is said to be $B_{O_j}$ iff $\rho_j\not=1$
and $\hat B_{O_j}$ iff $\rho_j=1$.
Precisely like in the proof
of Theorem~\ref{attr}, Sect.~\ref{attrs}, we see that
each $\tilde B_j$ is not identically zero. 
The connections~(\ref{rruelleV}), (\ref{rruelleneutralV}) read:
$\tilde B_j(z)-(T\tilde B_j)(z)=\sum_{i=1}^q 
\frac{\tilde{\partial}^V \rho_j}{\partial
V_i}\frac{1}{z-V_i},$
for every $j=1,...,r$.
By the assumption, there exists a linear
combination $\psi=\sum_{j=1}^r \beta_j \tilde B_j$, such that the
following holds:
\begin{equation}\label{rl}
\psi(z)-(T\psi)(z)=\frac{L}{z-V_q},
\end{equation}
where $L=\sum_{j=1}^r \beta_j \frac{\tilde{\partial}^V \rho_j}{\partial
V_q}$. By~(\ref{rpartialrest}),
\begin{equation}\label{rpartialsigma}
\frac{\tilde{\partial} \rho_j}{\partial
\sigma}=\frac{\tilde\Gamma_2^j}{\sigma},
\end{equation}
where $\tilde\Gamma_2^j$ and also $\tilde\Gamma_1^j$ are defined
by the expansion
$\tilde B_j(z)=\frac{\tilde\Gamma_1^j}{z}+\frac{\tilde\Gamma_2^j}{z^2}+O(\frac{1}{z^3})$
at infinity. Therefore, if $M_1$, $M_2$ are defined by
$\psi(z)=\frac{M_1}{z}+\frac{M_2}{z^2}+O(\frac{1}{z^3})$
at infinity, then
\begin{equation}\label{rga}
M_2=\sum_{j=1}^r \beta_j \frac{\tilde{\partial} \rho_j}{\partial
\sigma}=0.
\end{equation}
Now, by~(\ref{rta}),
\begin{equation}\label{rtpsi}
T\psi(z)= \frac{M_1}{\sigma}\frac{1}{z}+(M_2+\frac{m
M_1}{\sigma})\frac{1}{z^2}+O(\frac{1}{z^3}),
\end{equation}
and
\begin{equation}\label{rpsi-tpsi}
\psi(z)-T\psi(z)=\frac{\sigma-1}{\sigma}M_1\frac{1}{z}-\frac{m
M_1}{\sigma}\frac{1}{z^2}+O(\frac{1}{z^3}).
\end{equation}
But $m=0$, which, together with~(\ref{rpsi-tpsi}) and~(\ref{rl})
imply
\begin{equation}\label{rlm}
L=M_1(1-\frac{1}{\sigma}), \ \ \ \ L V_q=0.
\end{equation}
Since $V_q\not=0$, then $L=0$. In other words, $\psi$ is a fixed point
of $T$. It satisfies the conditions of Proposition~\ref{rcontr}.
Indeed, since $\sigma\not=1$, ~(\ref{rlm}) then gives us that $M_1=0$, and 
then
$\psi(z)=O(1/z^3)$. Applying
Proposition~\ref{rcontr} (where the assumptions (i)(a)-(ii) hold), 
we get a contradiction.

Remaining cases are quite similar.

($H_\infty^{attr}$). One can assume $k=q$ and assuming that the rank 
of the martix ${\bf O^{attr}}$ is less than $r$ and using the notations
of the previous case, we obtain that $L=0$, i.e., $\psi$ is a non-trivial
fixed point of $T$. Applying Proposition~\ref{rcontr} with
the assumptions (i)(b)-(ii), get a contradiction. 

($NN_\infty$). 
One can assume $k=q$ and assume that the rank of the matrix
${\bf O^{neutral}}$ is less than $r$.
Using the notations from the first case,
there exists a linear
combination $\psi=\sum_{j=1}^r \beta_j \tilde B_j$, such that
\begin{equation}\label{rln}
\psi(z)-(T\psi)(z)=\frac{L}{z-V_q},
\end{equation}
where $L=\sum_{j=1}^r \beta_j \frac{\tilde{\partial}^V \rho_j}{\partial
V_q}$. 
If $M_1$, $M_2$ are defined by
$\psi(z)=\frac{M_1}{z}+\frac{M_2}{z^2}+O(\frac{1}{z^3})$
at infinity, then, by~(\ref{rta}),
\begin{equation}\label{rtpsin}
T\psi(z)=\frac{M_1}{z}+\frac{M_2+m M_1}{z^2}+O(\frac{1}{z^3}),
\end{equation}
and
\begin{equation}\label{rpsi-tpsin}
\psi(z)-T\psi(z)=-\frac{m M_1}{z^2}+O(\frac{1}{z^3}).
\end{equation}
This, along with~(\ref{rln}), gives us $L=0$. That is,
$\psi$ is a non-zero fixed point of $T$. In turn, it implies that
$m M_1=0$. If $M_1=0$, then $\psi(z)=O(1/z^2)$, and if $M_1\not=0$,
then $m=0$. In either case, Proposition~\ref{rcontr} applies.
It gives a contradiction in this case, too.

($ND_\infty$), i.e. $\sigma=1$ and $m=0$.
One can assume that $k=q-1$, $l=q$. 
Now, assuming that the rank of the matrix is less than $r$, we get that the vectors
$(\frac{\tilde{\partial}^V
\rho_j}{\partial V_1},..., \frac{\tilde{\partial}^V \rho_j}{\partial
V_{q-2}})$, $1\le j\le r$, are linearly dependent.
In the previous notations,
there exists a linear
combination $\psi=\sum_{j=1}^r \beta_j \tilde B_j$, such that
\begin{equation}\label{rln2}
\psi(z)-(T\psi)(z)=\frac{L_{q-1}}{z-V_{q-1}}+\frac{L_{q}}{z-V_{q}},
\end{equation}
where 
$$L_i=\sum_{j=1}^r \beta_j \frac{\tilde{\partial}^V \rho_j}{\partial
V_i}, \ \ i=q-1, q.$$ 
If $M_1$, $M_2$ are defined by
$\psi(z)=\frac{M_1}{z}+\frac{M_2}{z^2}+O(\frac{1}{z^3})$
at infinity, then, from~(\ref{rta}) with $\sigma=1$, $m=0$,
\begin{equation}\label{rpsi-tpsin2}
\psi(z)-T\psi(z)=O(\frac{1}{z^3}).
\end{equation}
This, along with~(\ref{rln2}), gives us two linear relations
$L_{q-1}+L_q=0$, $L_{q-1}V_{q-1}+L_q V_q=0$. 
But $V_{q-1}\not=V_q$. Hence, $L_{q-1}=L_q=0$.
In other words,
$\psi$ is a non-zero fixed point of $T$. 
By Proposition~\ref{rcontr}, it is a contradiction again.
\begin{com}\label{rlast}
This proof demonstrates also the inequalities from Comment~\ref{rfatou}. 
Indeed, otherwise the rows of ${\bf O}$ 
are again linearly dependent, and the proof above applies. 
Observe however, that for this purpose already
the formal identity~(\ref{rruellehyp})
of Theorem~\ref{rformula} (i.e., with some coefficients $L_j$ without
their connections to parameter spaces) would be sufficient.
This approach is somewhat similar to the one
developed first in~\cite{Eps} for
the proof of the Fatou-Shishikura inequality, 
where also more degenerated cases
of the inequality are covered. 
\end{com}

\end{document}